\documentclass[11pt]{article}

\usepackage{verbatim}
\usepackage{graphicx}
\usepackage{amsmath,amssymb,amsfonts,euscript,enumerate,latexsym}
\usepackage{amsthm}
\usepackage{color,wrapfig}
\usepackage{pxfonts}
\usepackage{fancyhdr}
\usepackage{mathrsfs}
\usepackage{mathtools}
\usepackage{epsfig,subfigure}
\usepackage{marvosym}
\usepackage{txfonts}
\usepackage{hyperref}
\usepackage{times}
\usepackage{enumerate}
%\linespread{1.667}
%\renewcommand{\baselinestretch}{1.667}%double spacing
%\usepackage{setspace}
%\setstretch{1.667}
%\usepackage[normal]{subfigure}
%\usepackage{subfig}

\pagestyle{myheadings}
\def\titlerunning#1{\gdef\titrun{#1}}
\makeatletter
\def\author#1{\gdef\autrun{\def\and{\unskip, }#1}\gdef\@author{#1}}
\def\address#1{{\def\and{\\\hspace*{18pt}}\renewcommand{\thefootnote}{}%
		\footnote {#1}}%
	\markboth{\autrun}{\titrun}}
\makeatother
\def\email#1{e-mail: #1}

\def\keywords#1{\par\medskip
	\noindent\textbf{Keywords.} #1}

\usepackage{etoolbox}
\makeatletter
\patchcmd{\@maketitle}
{\ifx\@empty\@dedicatory}
{\ifx\@empty\@date \else {\vskip3ex \centering\footnotesize\@date\par\vskip1ex}\fi
	\ifx\@empty\@dedicatory}
{}{}
\patchcmd{\@adminfootnotes}
{\ifx\@empty\@date\else \@footnotetext{\@setdate}\fi}
{}{}{}
\makeatother

\newtheorem{thm}{Theorem}[section]
\newtheorem{cor}[thm]{Corollary}
\newtheorem{lemma}[thm]{Lemma}

\newtheorem{remark}[thm]{Remark}

\setlength{\topmargin}{-0.6in}
\setlength{\textheight}{8.7in}
\setlength{\textwidth}{6.9in}
\setlength{\oddsidemargin}{0in}
\setlength{\evensidemargin}{0in}
\parskip3pt

\newcommand{\R}{{\mathbb{R}}}

\newcommand{\N}{{\mathbb{N}}}

\newcommand{\vp}{\varphi}

\newcommand{\La}{\triangle}
\newcommand{\bs}{\backslash}

\frenchspacing

\textheight=23cm
\parindent=16pt
\oddsidemargin=-0.5cm
\evensidemargin=-0.5cm
\topmargin=-1cm

%\textwidth=6.5in \oddsidemargin=-0.2in \evensidemargin=-0.2in
%\baselineskip=16pt
%\textheight 8.6 in
%%\thanks{\, Corresponding author : Sunghoon Kim}
\begin{document}

\baselineskip=17pt

\titlerunning{}

\title{System of Degenerate Parabolic $p$-Laplacian}

\author{ Sunghoon Kim  %%\thanks{\, Corresponding author : Sunghoon Kim} 
\and Ki-Ahm Lee }

\date{}

\maketitle

\address{
Sunghoon Kim (\Letter) :
Department of Mathematics, The Catholic University of Korea, \\
43 Jibong-ro, Wonmi-gu, Bucheon-si, Gyeonggi-do, 14662, Republic of Korea \\
\email{math.s.kim@catholic.ac.kr}
\and
Ki-Ahm Lee :
Department of Mathematical Sciences, Seoul National University, Gwanak-ro 1, Gwanak-Gu, Seoul, 08826, Republic of Korea \& Korea Institute for Advanced Study, Seoul 02455, Republic of Korea \\
\email{ kiahm@snu.ac.kr }
}

\begin{abstract}
	In this paper, we study the mathematical properties of the solution $\bold{u}=\left(u^1,\cdots,u^k\right)$ to the degenerate parabolic system
	\begin{equation*}
		\bold{u}_t=\nabla\cdot\left(\left|\nabla\bold{u}\right|^{p-2}\nabla \bold{u}\right), \qquad \qquad \left(p>2\right).
	\end{equation*}
	More precisely, we show the uniqueness and existence of solution $\bold{u}$ and investigate a priori $L^{\infty}$ boundedness of the gradient of the solution. Assuming that the solution decays quickly at infinity, we also prove that the component $u^l$, $\left(1\leq l\leq k\right)$, converges to the function $c^l\mathcal{B}$ in space as $t\to\infty$. Here, the function  $\mathcal{B}$ is the fundamental or Barenblatt solution of $p$-Laplacian equation and the constant $c^l$ is determined by the $L^1$-mass of $u^l$. The proof is based on the existence of entropy functional.\\
	\indent As an application of the asymptotic large time behaviour, we establish a Harnack type inequality which makes the size of spatial average being controlled by the value of solution at one point.
\keywords{Degenerate Parabolic System; Entropy Approach; Asymptotic Behaviour; Harnack Type Inequality}
\end{abstract}

\vskip 0.2truein

\setcounter{equation}{0}
\setcounter{section}{0}

\section{Introduction and Main Results}\label{section-intro}
\setcounter{equation}{0}
\setcounter{thm}{0}

\indent We consider the solution $\bold{u}=\left(u^1,\cdots,u^k\right)$ to the Cauchy problem for the  $p$-Laplacian type parabolic system
\begin{equation}\label{eq-main-equation-of-system}\tag{SPL}
	\begin{cases}
		\begin{aligned}
			\left(u^l\right)_t&=\nabla\left(\left|\nabla\bold{u}\right|^{p-2}\nabla u^l\right) \qquad \mbox{in $\R^n\times\left(0,\infty\right)$}\\
			u^l(x,0)&=u^l_0(x) \qquad \qquad \qquad \forall x\in\R^n,\quad 1\leq l\leq k
		\end{aligned}
	\end{cases}
\end{equation}
in the range of $p>2$, with initial data $u^l_0$ nonnegative, integrable and compactly supported.\\
\indent This type of system has been studied by many authors because of mathematical interest and wide applications. Local boundedness and local $C^{1,\alpha}$ estimates of solution were studied by DeBenedetto,  \cite{D}. It also arose in geometric application \cite{U}, in quasiregular mappings \cite{I} and in fluid dynamics \cite{K}. In particular, in the mid 1960s, Ladyzhenskaya \cite{La} suggested the system above as a model of non-Newtonian fluids, which is formulated by a series of equations having a stress tensor determined by the symmetric part of the gradient of the velocity:
\begin{equation*}
	\begin{aligned}
		\bold{v}_t=\nabla\cdot \mathcal{A}\left(D\left(\bold{v}\right)\right)+\nabla p=\nabla\left(\bold{v}\otimes \bold{v}\right)-f ,\qquad \nabla \bold{v}=0
	\end{aligned}
\end{equation*}
where $\bold{v}=\left(v^1,v^2,v^3\right)$ denotes the velocity, $D\left(\bold{v}\right)$ is the symmetric part of $\nabla\bold{v}$, $p$ is the pressure and $\mathcal{A}$ is a monotone vector field.\\
\indent When $k=1$, the system \eqref{eq-main-equation-of-system} is usually called the parabolic $p$-Laplacian equation, which is well  known in a series of models of gradient-dependent diffusion equations. Large number of literatures on the mathematical theory of parabolic $p$-Laplacian equation can be found. We refer the readers to the papers \cite{KV}, \cite{LPV} for asymptotic behaviour and papers \cite{D}, \cite{DGV} for regularity theory.\\
\indent Another well-known model equation for nonlinear diffusion, which is deeply related to the $p$-laplacian type equation, is the porous media type equation, shortly PME,
\begin{equation*}
	u_t=\La u^m=m\nabla\cdot\left(u^{m-1}\nabla u\right), \qquad m>1.
\end{equation*}
Many mathematical properties of $p$-Laplacian equation can be treated by the systematic study of the PME. For example, the combination of symmetrization, scaling porperties and time discretization can be useful to get the suitable estimates in the theory of $p$-Laplacian. We also considered a few problems of porous medium type system, which were designed to get suitable techniques on the study of degenerate parabolic $p$-Laplacian system \eqref{eq-main-equation-of-system}. In \cite{KL1}, we investigated the local continuity and asymptotic behaviour of the degenerate parabolic system
\begin{equation*}\label{eq-model-eq-in-porous-medium-equation-total-sum-as-diff-coeff}
	\left(u^l\right)_t=m\nabla\cdot\left(\left(\sum_{l=1}^{k}u^l\right)^{m-1}\nabla u^l\right), \qquad m>1, \,\,1\leq l\leq k,
\end{equation*}
which describes the population densities of $k$-species in a closed system whose diffusion are determined by their total population density. In \cite{KL2}, we studied the mathematical properties of the solution $\bold{u}=\left(u^1,\cdots,u^k\right)$ of the degenerate parabolic system
\begin{equation}\label{eq-model-eq-in-porous-medium-equation-norm-as-diff-coeff}
	\left(u^l\right)_t=m\nabla\cdot\left(\left|\bold{u}\right|^{m-1}\nabla u^l\right), \qquad m>1,\,\,1\leq l\leq k.
\end{equation}
Two different methods were considered for the analysis of asymptotic behaviour of \eqref{eq-model-eq-in-porous-medium-equation-norm-as-diff-coeff} and, as a consequence of asymptotic analysis, we established a Harnack type inequality of solution. We also found 1-directional travelling wave type solutions of \eqref{eq-model-eq-in-porous-medium-equation-norm-as-diff-coeff} in \cite{KL2}.\\
\indent As the first result of this paper, we will give the uniqueness and existence of the solution to the parabolic system \eqref{eq-main-equation-of-system}. Recently, many approaches to the existence of solution rely on obtaining suitable regularities which are available on the data. Following such ideas, we first focus on a priori $L^{\infty}$ boundedness estimates of diffusion coefficients of the parabolic system \eqref{eq-main-equation-of-system} in the proof of existence of solution. The statement of our first result is as follow.

\begin{thm}[\textbf{Existence of solution $\bold{u}$ and Uniform $L^{\infty}$ boundedness of $\left|\nabla\bold{u}\right|$}]\label{thm-Main-boundedness-of-diffusion-coefficients-of-p-Laplacian}
	Let $n\geq 2$ and $p>2$. Suppose that $\bold{u}_0\in L^2\left(\R^n\right)\cap W^{1,p}\left(\R^n\right)$. Then there exists a weak solution $\bold{u}=\left(u^1,\cdots,u^k\right)$ of \eqref{eq-main-equation-of-system}. Moreover, there exist a constant $\textbf{M}>0$ such that
	\begin{equation}\label{eq-uniform-boundeness-of-grad-of-soluiton-bold-u}
		\sup_{x\in\R^n,\,\,t\geq T}\left|\nabla\bold{u}\right|(x,t)\leq \frac{\textbf{M}}{T^{\frac{n}{n(p-2)+2p}}}.
	\end{equation}
\end{thm}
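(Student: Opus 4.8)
The plan is to establish \eqref{eq-uniform-boundeness-of-grad-of-soluiton-bold-u} first as an a priori bound on smooth approximations, and then to use it, together with the standard energy bounds, to construct $\bold{u}$ by a compactness--monotonicity argument, so that $\bold{u}$ inherits the estimate. For $\delta>0,R>0$, consider on $B_R\times(0,\infty)$, with zero lateral data and a mollification $\bold{u}_0^{\delta}$ of $\bold{u}_0$ (so that $\bold{u}_0^{\delta}\to\bold{u}_0$ in $L^2\cap W^{1,p}$ as $\delta\to0,R\to\infty$), the regularized system
\begin{equation*}
 \left(u^l\right)_t=\nabla\cdot\Bigl(\bigl(|\nabla\bold{u}|^2+\delta\bigr)^{\frac{p-2}{2}}\nabla u^l\Bigr),\qquad 1\le l\le k .
\end{equation*}
Since this system is nondegenerate and \emph{diagonal} — the components are coupled only through the scalar factor $\bigl(|\nabla\bold{u}|^2+\delta\bigr)^{(p-2)/2}$ common to every equation — the classical theory of quasilinear parabolic systems furnishes unique smooth solutions $\bold{u}_{\delta,R}$, to which the scalar technology (energy/Moser iteration, Bernstein's method) applies.

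Two energy identities, uniform in $\delta,R$, come first. Testing the $u^l$-equation with $u^l$ and summing gives $\frac12\frac{d}{dt}\|\bold{u}\|_{L^2(B_R)}^2=-\int_{B_R}\bigl(|\nabla\bold{u}|^2+\delta\bigr)^{(p-2)/2}|\nabla\bold{u}|^2\le 0$, so $\|\bold{u}(t)\|_{L^2}$ is nonincreasing and $\int_0^\infty\!\!\int|\nabla\bold{u}|^p\le\tfrac12\|\bold{u}_0\|_{L^2}^2$. Testing with $\left(u^l\right)_t$ and summing (the lateral term vanishes, $u^l=0$ on $\partial B_R$) gives $\frac1p\frac{d}{dt}\int_{B_R}\bigl(|\nabla\bold{u}|^2+\delta\bigr)^{p/2}=-\int_{B_R}|\bold{u}_t|^2\le 0$, whence $\|\nabla\bold{u}(t)\|_{L^p}$ is nonincreasing and, in the limit, $\|\nabla\bold{u}(t)\|_{L^p}\le\|\nabla\bold{u}_0\|_{L^p}$. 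This second identity, available exactly because $\bold{u}_0\in W^{1,p}$, is the starting point for the gradient estimate.

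The gradient bound is the heart of the matter. Differentiating the $l$-th equation in $x_i$, multiplying by $2u^l_{x_i}$, and summing over $l,i$, a Bernstein-type computation shows that $w:=|\nabla\bold{u}|^2$ obeys a differential inequality $w_t\le\nabla\cdot(\mathcal A\,\nabla w)$ where $\mathcal A$ is symmetric with ellipticity constants comparable to $\bigl(|\nabla\bold{u}|^2+\delta\bigr)^{(p-2)/2}$ — the terms quadratic in second derivatives being absorbed because $p>2$. Thus $w$ is a subsolution of a porous-medium-type equation with exponent $m=p/2>1$ and modulus $w^{(p-2)/2}$. Testing this inequality with $w^{q-1}$ and iterating (Sobolev embedding, $n\ge2$, together with iteration of the exponent, $q_0=p/2$ being the starting exponent) gives the $L^\infty$--$L^{q_0}$ smoothing estimate for such equations; since $\|w(0)\|_{L^{p/2}(B_R)}=\|\nabla\bold{u}_0^{\delta}\|_{L^p(B_R)}^2\le C\|\nabla\bold{u}_0\|_{L^p}^2$ up to $\delta$-terms that vanish as $\delta\to0$, this yields
\begin{equation*}
 \|w(t)\|_{L^\infty}\le C\,t^{-\frac{2n}{n(p-2)+2p}}\,\|\nabla\bold{u}_0\|_{L^p}^{\frac{4p}{n(p-2)+2p}},\qquad C=C(n,p,k),
\end{equation*}
uniformly in $\delta,R$; taking square roots and the supremum over $t\ge T$ produces \eqref{eq-uniform-boundeness-of-grad-of-soluiton-bold-u} with $\textbf{M}=C\,\|\nabla\bold{u}_0\|_{L^p}^{2p/(n(p-2)+2p)}$. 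I expect this to be the principal obstacle: performing the Bernstein identity for the \emph{system} and absorbing its error terms (which genuinely needs $p>2$), and making sure the smoothing constant depends only on $n,p,k$ — not on $\delta$ or $R$ — so that the bound is preserved when the regularization is removed; that $w$ is merely a subsolution is harmless, the energy/Moser scheme using only the favorable sign.

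It remains to pass to the limit. The uniform bounds give $\bold{u}_{\delta,R}$ bounded in $L^\infty(0,T;L^2)\cap L^p(0,T;W^{1,p})$ with $\partial_t\bold{u}_{\delta,R}$ bounded in $L^{p'}(0,T;W^{-1,p'})$ and $\nabla\bold{u}_{\delta,R}$ locally uniformly bounded by the previous step; Aubin--Lions yields $\bold{u}_{\delta,R}\to\bold{u}$ strongly in $L^p_{\mathrm{loc}}$ and a.e., while $\bigl(|\nabla\bold{u}_{\delta,R}|^2+\delta\bigr)^{(p-2)/2}\nabla\bold{u}_{\delta,R}\rightharpoonup\chi$ in $L^{p'}$. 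One identifies $\chi=|\nabla\bold{u}|^{p-2}\nabla\bold{u}$ by Minty's monotonicity trick, using $\int\bigl(|\nabla\bold{u}_{\delta,R}|^{p-2}\nabla\bold{u}_{\delta,R}-|\nabla\bold{v}|^{p-2}\nabla\bold{v}\bigr):\nabla(\bold{u}_{\delta,R}-\bold{v})\ge0$ and the first energy identity to pass to the $\limsup$; hence $\bold{u}$ is a weak solution of \eqref{eq-main-equation-of-system}, and the a priori estimates — in particular \eqref{eq-uniform-boundeness-of-grad-of-soluiton-bold-u} — pass to $\bold{u}$ by weak lower semicontinuity and a.e. convergence (the $C^{1,\alpha}$ regularity of \cite{D} then makes the bound hold pointwise). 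Uniqueness follows from the same monotonicity applied to the difference of two solutions with the same datum: $\frac12\frac{d}{dt}\|\bold{u}-\4{\bold{u}}\|_{L^2}^2=-\int\bigl(|\nabla\bold{u}|^{p-2}\nabla\bold{u}-|\nabla\4{\bold{u}}|^{p-2}\nabla\4{\bold{u}}\bigr):\nabla(\bold{u}-\4{\bold{u}})\le0$.
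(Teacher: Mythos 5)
Your overall strategy coincides with the paper's: regularize to a uniformly parabolic system, run a Bernstein computation on $w=\left|\nabla\bold{u}\right|^2$ to obtain a porous-medium-type differential inequality whose diffusion matrix has ellipticity comparable to $w^{\frac{p-2}{2}}$, iterate to an $L^{\infty}$--$L^{p/2}$ smoothing bound with rate $t^{-\frac{2n}{n(p-2)+2p}}$ (your exponent bookkeeping is correct and reproduces $T^{-\frac{n}{n(p-2)+2p}}$ with $\textbf{M}$ controlled by $\left\|\nabla\bold{u}_0\right\|_{L^p}$), and then pass to the limit. The differences are that the paper regularizes by adding $1/\widehat{j}\,$ to the diffusion coefficient on all of $\R^n$ (no spatial truncation) and runs a De Giorgi truncation iteration (Lemma 4.1 of Chap.~I of \cite{D}, in the spirit of \cite{CV}) rather than Moser iteration; your Aubin--Lions plus Minty identification of the limiting flux is, if anything, spelled out more carefully than the paper's weak-convergence passage, and your uniqueness remark reproduces the paper's Lemma 2.1.

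The one step that does not go through as written is the iteration on $B_R$. Imposing $u^l=0$ on $\partial B_R$ gives no boundary condition on $w=\left|\nabla\bold{u}\right|^2$: when you test $w_t\le\nabla\cdot\left(\mathcal{A}\nabla w\right)$ with $w^{q-1}$ over $B_R$, integration by parts leaves the lateral flux term $\int_{\partial B_R}w^{q-1}\left(\mathcal{A}\nabla w\right)\cdot\nu\,dS$, which has no sign; the Dirichlet condition on $\bold{u}$ does not control $\left|\nabla\bold{u}\right|$ or its normal derivative on $\partial B_R$ (that is itself a boundary-gradient-estimate problem requiring barriers). Hence the claimed smoothing constant ``uniform in $\delta,R$'' is not justified as stated. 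Two repairs are available: (i) follow the paper and keep the spatial domain $\R^n$, regularizing only the coefficient, so that no lateral boundary appears and the decay of the smooth approximations makes the truncated or weighted quantities integrable; or (ii) keep $B_R$ but localize the Moser iteration with cutoffs supported in, say, $B_{R/2}$, and verify that the cutoff error terms, of size $O(1/R)$ times quantities controlled by your two energy identities, vanish as $R\to\infty$, so the interior bound becomes global in the limit. With either fix the remainder of your argument (the Bernstein structure, the rate computation, and the compactness--monotonicity limit passage) is sound.
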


In general, solutions of parabolic systems lose continuously their information from the initial data as the time goes by, and evolve only under the laws determined by the system. Therefore, only the diffusion coefficients and external forcing terms will play important roles on the evolution of solution after the large time. Under this observation, we can expect that each component of solution $\bold{u}$ of parabolic system \eqref{eq-main-equation-of-system} satisfies that
\begin{equation}\label{eq-approximation-expectation-of-u-i-in-a-large-time}
	u^l(x,t)\approx c^lf(x,t) \qquad\qquad  \mbox{for sufficiently large $t$}
\end{equation}
for some constants $c^1$, $\cdots$, $c^k$ and function $f$. Then, by \eqref{eq-main-equation-of-system} and \eqref{eq-approximation-expectation-of-u-i-in-a-large-time} $f$ will satisfy
\begin{equation}\label{eq-equation-for-f-approximation-of-u-i-s-at-sufficineltyl-large=-time}
	f_t\approx \left|\bold{c}\right|^{p-2}\nabla\cdot\left(\left|\nabla f\right|^{p-2}\nabla f\right)\qquad \qquad \mbox{for sufficiently large $t$}
\end{equation}
where $\bold{c}=\left(c^1,\cdots,c^k\right)$. Denote by $\mathcal{B}_M$ the fundamental solution of $p$-Laplacian equation, i.e., the function $\mathcal{B}_M$ is a solution of 
\begin{equation*}
	g_t=\textbf{div}\left(\left|\nabla g\right|^{p-2}\nabla g\right) \qquad \left(p>1\right)
\end{equation*}
in a weak sense in $\R^n\times(0,\infty)$ and satisfies
\begin{equation*}
	\mathcal{B}_M(x,0)=M\delta(x) \qquad \left(M>0\right).
\end{equation*}
Then, it is expressed by the following self-similar form
\begin{equation}\label{expression-of-Barenblatt-profile-of-p-Laplaican}
	\mathcal{B}_M(x,t)=t^{-a_1}\left(C_M-\frac{\left(p-2\right)a_2^{\frac{1}{p-1}}}{p}\left(\frac{\left|x\right|}{t^{a_2}}\right)^{\frac{p}{p-1}}\right)_+^{\frac{p-1}{p-2}}
\end{equation} 
where the  constant $C_{M}$ is determined by $L^1$-mass $M$ and the constants $a_1$ and $a_2$ are given by 
\begin{equation}\label{constants-a-1-a-2-for-scaling-and-mass-conservation}
a_1=\frac{n}{(p-2)n+p} \qquad \mbox{and}\qquad a_2=\frac{1}{(p-2)n+p}.
\end{equation}
Since the convergence between solution of \eqref{eq-equation-for-f-approximation-of-u-i-s-at-sufficineltyl-large=-time} and fundamental profile $\mathcal{B}_M$ is well known by \cite{KV} and \cite{LPV}, we cannot choose but think that
\begin{equation*}
	u^l\approx c^lf\to c^l\mathcal{B}_M \qquad \mbox{as $t\to\infty$}.
\end{equation*}
As the second result of our paper, we studied the asymptotic large time behaviour of component $u^l$, $\left(1\leq j\leq k\right)$, through the method of entropy approach. The statement is as follow. 
\begin{thm}[\textbf{Aysmptotic Large Time Behaviour with Entropy Approach}]\label{thm-convergence-between-absolute-bold-theta-and-fundamental-function-with-decay-rate}
	Let $n\geq 2$, $p>2$ and let  $\bold{u}=\left(u^{\,1},\cdots,u^{\,k}\right)$ be a solution of \eqref{eq-main-equation-of-system} with initial data $\bold{u}_0=\left(u^1_0,\cdots,u^k_0\right)$ such that
	\begin{equation*}
		\int_{\R^n}\left(1+\left|\bold{u}_0\right|^{\frac{p-2}{p-1}}+\left|x\right|^{\frac{p}{p-1}}\right)\left|\bold{u}_0\right|\,dx<\infty.
	\end{equation*}
	Then, there exists a constant $C_i>0$ such that
	\begin{equation}\label{1st-conclusion-of-convergence-between-u-i-and-fundamental-function-with-decay-rate}
		\lim_{t\to\infty}\left\|u^l(\cdot,t)-\frac{M_l}{\left|\bold{M}\right|}B_{\left|\bold{M}\right|}(\cdot,t)\right\|_{L^1\left(\R^n\right)}=0, \qquad 1\leq l\leq k,
	\end{equation}
	where $\left\|u_0^l\right\|_{L^1\left(\R^n\right)}=M_l$ for $i=1,\cdots, k$ and $\bold{M}=\left(M_1,\cdots,M_k\right)$.
\end{thm}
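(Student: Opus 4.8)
My plan is to argue by the relative-entropy (free-energy) method carried out in self-similar variables. Write $\mathcal{B}_m(x,t)=t^{-a_1}V_m(xt^{-a_2})$, where $V_m$ is the profile in \eqref{expression-of-Barenblatt-profile-of-p-Laplaican} and $a_1,a_2$ are as in \eqref{constants-a-1-a-2-for-scaling-and-mass-conservation}, and set
\begin{equation*}
	\bold{v}(y,\tau)=t^{a_1}\bold{u}(x,t),\qquad y=xt^{-a_2},\qquad \tau=\log t.
\end{equation*}
Since $a_1,a_2$ are chosen so that the diffusion and the time derivative scale alike, \eqref{eq-main-equation-of-system} becomes the nonlinear Fokker--Planck-type system
\begin{equation*}
	v^{\,l}_\tau=\nabla\cdot\left(|\nabla\bold{v}|^{p-2}\nabla v^{\,l}+a_2\,y\,v^{\,l}\right),\qquad 1\le l\le k,
\end{equation*}
whose steady state is $V_{|\bold{M}|}$, and \eqref{1st-conclusion-of-convergence-between-u-i-and-fundamental-function-with-decay-rate} is equivalent to $\|v^{\,l}(\cdot,\tau)-\tfrac{M_l}{|\bold{M}|}V_{|\bold{M}|}\|_{L^1(\R^n)}\to 0$, the $L^1$-norm being invariant under the rescaling. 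Two structural facts I would record here: the system has no zeroth-order terms, so each $\int_{\R^n}v^{\,l}\,dy\equiv M_l$ is conserved and the direction $\bold{c}:=\bold{M}/|\bold{M}|$ is frozen along the flow; and, because all components share the coefficient $|\nabla\bold{v}|^{p-2}$, the scalar quantity $\theta:=|\bold{v}|$ satisfies the identity
\begin{equation*}
	\theta_\tau=\nabla\cdot\left(|\nabla\bold{v}|^{p-2}\nabla\theta+a_2\,y\,\theta\right)-|\nabla\bold{v}|^{p-2}\,\frac{|\nabla\bold{v}|^{2}-|\nabla\theta|^{2}}{\theta},
\end{equation*}
the last term being nonnegative by the Cauchy--Schwarz inequality $|\nabla\theta|\le|\nabla\bold{v}|$.

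Next I would work with the entropy
\begin{equation*}
	\mathcal{E}[\bold{v}]=\int_{\R^n}\left(c_1\,|\bold{v}|^{\frac{2p-3}{p-1}}+c_2\,|y|^{\frac{p}{p-1}}\,|\bold{v}|\right)dy,
\end{equation*}
whose exponents $\frac{2p-3}{p-1}=1+\frac{p-2}{p-1}$ and $\frac{p}{p-1}$ are exactly those in the hypothesis on $\bold{u}_0$, so that $\mathcal{E}$ is finite on the initial datum. With the ratio $c_2/c_1$ suitably fixed, convexity of $\bold{w}\mapsto|\bold{w}|^{(2p-3)/(p-1)}$ (recall $p>2$) and a Lagrange-multiplier computation show that $\mathcal{E}$ restricted to $\{\bold{v}\ge 0:\int v^{\,l}=M_l\}$ has the \emph{unique} minimiser $\bold{c}\,V_{|\bold{M}|}$: criticality forces $\bold{v}$ to be aligned with a fixed vector and its modulus to be of the form $(\mathrm{const}-\mathrm{const}\,|y|^{p/(p-1)})_+^{(p-1)/(p-2)}$, i.e. a profile \eqref{expression-of-Barenblatt-profile-of-p-Laplaican}, while the conserved masses pin the direction to $\bold{c}$ and the mass to $|\bold{M}|$. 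The heart of the argument is then to prove that $\mathcal{E}$ is a Lyapunov functional: differentiating along the flow and using the two displays above, one expects an identity $\frac{d}{d\tau}\mathcal{E}[\bold{v}(\cdot,\tau)]=-\mathcal{D}[\bold{v}(\cdot,\tau)]\le 0$, where $\mathcal{D}$ decomposes as (i) the entropy dissipation of the \emph{scalar} rescaled $p$-Laplacian applied to $\theta=|\bold{v}|$ --- nonnegative by the sharp $p$-Gagliardo--Nirenberg inequality underlying the scalar theory of \cite{KV},\cite{LPV}, vanishing only at $\theta=V_m$ --- plus (ii) a nonnegative ``alignment'' term built from $|\nabla\bold{v}|^{p-2}(|\nabla\bold{v}|^{2}-|\nabla\theta|^{2})/\theta$, which vanishes only when the $\nabla v^{\,l}$ are pairwise parallel.

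Granting the entropy inequality, I would then pass to the limit. Integrating gives $\int_0^\infty\mathcal{D}[\bold{v}(\cdot,\tau)]\,d\tau\le\mathcal{E}[\bold{u}_0]-\mathcal{E}[\bold{c}\,V_{|\bold{M}|}]<\infty$, so $\mathcal{D}[\bold{v}(\cdot,\tau_j)]\to 0$ along some $\tau_j\to\infty$. The rescaled form of the gradient bound \eqref{eq-uniform-boundeness-of-grad-of-soluiton-bold-u} of Theorem \ref{thm-Main-boundedness-of-diffusion-coefficients-of-p-Laplacian}, mass conservation, and propagation in time of the weighted moment $\int|y|^{p/(p-1)}|\bold{v}|\,dy$ (a Gronwall estimate from the equation, which also rules out escape of mass to infinity) make $\{\bold{v}(\cdot,\tau_j)\}$ precompact in $L^1(\R^n)$; any limit point $\bold{v}_\infty$ has $\mathcal{D}[\bold{v}_\infty]=0$, hence is aligned, $\bold{v}_\infty=\bold{c}_\infty\theta_\infty$, with $\theta_\infty$ a steady profile $V_m$, and passing the conserved masses to the limit forces $\bold{c}_\infty=\bold{M}/|\bold{M}|$ and $m=|\bold{M}|$, so $\bold{v}_\infty=\bold{c}\,V_{|\bold{M}|}$. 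Since $\mathcal{E}$ is a strict Lyapunov functional and, by the same a priori bounds, the whole orbit $\{\bold{v}(\cdot,\tau)\}_{\tau\ge 1}$ is precompact, its $\omega$-limit set reduces to the single point $\bold{c}\,V_{|\bold{M}|}$; thus $\bold{v}(\cdot,\tau)\to\bold{c}\,V_{|\bold{M}|}$ in $L^1(\R^n)$, and undoing the rescaling yields \eqref{1st-conclusion-of-convergence-between-u-i-and-fundamental-function-with-decay-rate}.

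The main obstacle, I expect, is the construction and --- above all --- the monotonicity of the entropy for the \emph{system}. Unlike the porous-medium flow, the rescaled $p$-Laplacian has no honest Wasserstein gradient-flow structure, so $\frac{d}{d\tau}\mathcal{E}=-\mathcal{D}$ with $\mathcal{D}\ge 0$ must be extracted by a careful direct computation in which the coupling coefficient $|\nabla\bold{v}|^{p-2}$, paired against the gradient of the associated pressure $c_1\tfrac{2p-3}{p-1}|\bold{v}|^{(p-2)/(p-1)}+c_2|y|^{p/(p-1)}$, has to be resolved cleanly into a scalar $p$-Gagliardo--Nirenberg part plus the nonnegative alignment part, while every term is kept integrable near the free boundary $\{\theta=0\}$ (where the exponent $\frac{2p-3}{p-1}<2$ makes the weights delicate). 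A second difficulty is the rigidity $\mathcal{D}[\bold{v}_\infty]=0\Rightarrow\bold{v}_\infty=\bold{c}\,V_{|\bold{M}|}$, which needs simultaneously the equality analysis of the scalar $p$-Gagliardo--Nirenberg inequality and the pointwise-parallelism rigidity for the $\nabla v^{\,l}$, together with enough regularity and tightness --- supplied by Theorem \ref{thm-Main-boundedness-of-diffusion-coefficients-of-p-Laplacian} and the moment hypothesis on $\bold{u}_0$ --- to make all the passages to the limit rigorous.
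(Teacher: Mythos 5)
Your set-up is the same as the paper's: the same self-similar rescaling, and your functional $\mathcal{E}$ is exactly the paper's entropy $\widehat{H}_{\left|\mbox{\boldmath$\theta$}\right|}$ up to an additive constant (with $c_1=\tfrac{(p-1)^2}{(2p-3)(p-2)}$, $c_2=\tfrac{p-1}{p}$), and you exploit the same structural facts (mass conservation, $\left|\nabla\left|\mbox{\boldmath$\theta$}\right|\right|\le\left|\nabla\mbox{\boldmath$\theta$}\right|$ with rigidity in the equality case). Where you diverge is the endgame: you propose a qualitative LaSalle/$\omega$-limit argument (dissipation integrable, orbit precompact in $L^1$, limit identified by $\mathcal{D}=0$), whereas the paper proves a quantitative statement: using the entropy--entropy-production inequality of Agueh (eq.\ (36) of \cite{A}) it gets $\widehat{H}_{\left|\mbox{\boldmath$\theta$}\right|}(\tau)\le e^{-\tau}\widehat{H}_{\left|\mbox{\boldmath$\theta$}\right|}(0)$, then a Csisz\'ar--Kullback-type lower bound (the Taylor expansion \eqref{eq-second-order-Taylor-expansion-for-sigma-around-1} plus equal masses) converts this into $\left\|\,\left|\bold{u}\right|(\cdot,t)-\mathcal{B}_{\left|\bold{M}\right|}(\cdot,t)\right\|_{L^1}\le C t^{-a_2/2}$ with no compactness at all; the passage from the modulus to the components is done separately (Lemma \ref{lem-relation-between-component-theta-l-and-absolute-of-bold-theta-at-infty}) via the equality case of Cauchy--Schwarz, exactly the ``alignment rigidity'' you describe.

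The genuine gap is your compactness step. The gradient bound \eqref{eq-uniform-boundeness-of-grad-of-soluiton-bold-u} decays like $t^{-\frac{n}{n(p-2)+2p}}$, while the self-similar gradient scale is $t^{-(a_1+a_2)}=t^{-\frac{n+1}{n(p-2)+p}}$, which is strictly faster; hence in the rescaled variables the available bound on $\nabla_\eta\mbox{\boldmath$\theta$}$ grows like a positive power of $t$ and cannot give precompactness of the orbit, uniform in $\tau$. Moreover, your identification of the limit needs more than weak $L^1$ compactness (which Dunford--Pettis plus the moment bound would give): to pass $\mathcal{D}[\bold{v}(\cdot,\tau_j)]\to 0$ to the limit and invoke the rigidity you need strong convergence of the gradients, which is nowhere supplied; and the strict-Lyapunov/$\omega$-limit-is-a-point conclusion presupposes precompactness of the whole orbit in a topology in which $\mathcal{E}$ is continuous. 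A second, smaller issue: your claimed termwise decomposition $\mathcal{D}=$ (scalar dissipation) $+$ (alignment term), both nonnegative, is not available by direct computation --- the confinement cross terms (the analogues of $I_3$, $I_4$ in the paper) have no sign, and the nonnegativity of the total production, with the relative entropy as lower bound, is precisely the sharp functional inequality from \cite{A} that the paper imports; you correctly flag this as the main obstacle but leave it unresolved, whereas it is the pivot of the paper's argument (and, once you have it, it already yields the exponential decay and hence the theorem with a rate, making the whole compactness machinery unnecessary).
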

The Harnack type inequality play important roles on the the mathematical theories of parabolic differential equations. It is used to obtain upper bounds of a solution of the generalized PME \cite{DK}. It is also the core of investigating the local behaviour of solution to nonlinear parabolic equation (See \cite{CS} for the local structure of free boundaries and \cite{EG} for the boundary behaviour of solutions). Hence, the existence of the system version of Harnack type inequality will be very helpful in studying a variety of mathematical theories on the system.\\
\indent As a generalization, we will consider a suitable Harnack type inequality for the component $u^l$ of a continuous weak solution $\bold{u}=\left(u^1,\cdots,u^k\right)$ of \eqref{eq-main-equation-of-system} in the last part of our paper. One interest thing in our result is that the size of the spatial average of $u^l$ is controlled by the value of it at one point. The result is stated as follow.

\begin{thm}[\textbf{Harnack Type Inequality}]\label{lem-calculation-about-waiting-time-lower-bound}
	Let $p>2$, $T>0$ and let $\bold{u}=\left(u^1,\cdots,u^k\right)$ be a continuous weak solution of 
	\begin{equation}\label{eq-system-PME-for-waiting-time-1}
		\left(u^l\right)_t=\nabla\left(\left|\nabla\bold{u}\right|^{p-2}\nabla u^l\right) \qquad \mbox{in $\R^n\times\left[0,T\right]$}
	\end{equation}
	with the initial data $u^l_0$, $(1\leq l\leq k)$, non-negative, integrable and compactly supported. Suppose that  there exists an uniform constant $\mu_0>0$ such that
	\begin{equation*}
		\frac{\min_{1\leq j\leq k}\left\{\int_{\R^n}u^j(x,0)\,dx\right\}}{\max_{1\leq j\leq k}\left\{\int_{\R^n}u^j(x,0)\,dx\right\}}\geq\mu^0>0.
	\end{equation*}
	Then for $R>T^{\frac{1}{p}}>0$ there exists a constant $C^{\ast}=C^{\ast}\left(m,n\right)>0$ such that
	\begin{equation}\label{eq-claim-L-1-norm=of-u-at-initial-is-bounded-by-H-sub-m}
		\int_{\left\{|x|<R\right\}}u^l(x,0)\,dx\leq \frac{C^{\ast}}{\left(\mu^l\right)^{1+\frac{n(p-2)}{p}}}\left(\frac{R^{n+\frac{p}{p-2}}}{T^{\frac{1}{p-2}}}+T^{\frac{n}{p}}\left(u^l\right)^{1+\frac{n(p-2)}{p}}\left(0,T\right)\right) \qquad \forall 1\leq i\leq k
	\end{equation}
	where
	\begin{equation*}
		\mu^l=\frac{\int_{\R^n}u^l(x,0)\,dx}{\max_{1\leq j\leq k}\left\{\int_{\R^n}u^j(x,0)\,dx\right\}} \qquad \forall 1\leq i\leq k.
	\end{equation*}
\end{thm}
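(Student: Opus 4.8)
The plan is to reduce the estimate to a scalar‐type comparison by exploiting the asymptotic behaviour established in Theorem~\ref{thm-convergence-between-absolute-bold-theta-and-fundamental-function-with-decay-rate}, together with a localized mass‐comparison argument of Aronson–Caffarelli / DiBenedetto type. Concretely, I would first observe that each component $u^l$ is a subsolution of the scalar degenerate parabolic $p$-Laplacian operator with the diffusion coefficient $|\nabla\bold u|^{p-2}$, and that the uniform gradient bound \eqref{eq-uniform-boundeness-of-grad-of-soluiton-bold-u} gives quantitative control of this coefficient for $t\ge T$. The quantity $\mu^l$ normalizes the $L^1$–masses so that, using the hypothesis $\mu^0>0$, all components are comparable in mass; this lets one transfer information about $|\bold u|$ (whose large–time profile is $\mathcal B_{|\bold M|}$ up to lower order) to the individual $u^l$.

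The key steps, in order: (1) Use Theorem~\ref{thm-convergence-between-absolute-bold-theta-and-fundamental-function-with-decay-rate} to write $u^l(\cdot,t)\sim \tfrac{M_l}{|\bold M|}\mathcal B_{|\bold M|}(\cdot,t)$ in $L^1$, and combine with the explicit self–similar form \eqref{expression-of-Barenblatt-profile-of-p-Laplaican} to get, for large $t$, both a pointwise lower bound for $u^l$ near the origin and the value $u^l(0,t)\approx \tfrac{M_l}{|\bold M|}\,C_{|\bold M|}\,t^{-a_1}$ with $a_1$ as in \eqref{constants-a-1-a-2-for-scaling-and-mass-conservation}. (2) Derive a \emph{local lower bound on the mass}: by the $L^1$–contraction / comparison principle applied to the scalar equation satisfied by $u^l$, the mass $\int_{\{|x|<R\}}u^l(x,0)\,dx$ is controlled by $\int_{\{|x|<2R\}}u^l(x,t)\,dx$ plus a flux term across $\{|x|=2R\}\times[0,t]$; the flux is estimated by the gradient bound \eqref{eq-uniform-boundeness-of-grad-of-soluiton-bold-u}, producing the term $R^{n+\frac{p}{p-2}}/T^{\frac1{p-2}}$ after choosing the optimal time scale $t\sim R^p$ (this is where the restriction $R>T^{1/p}$ enters). (3) Bound $\int_{\{|x|<2R\}}u^l(x,t)\,dx$ by the sup of $u^l$ over that ball times $R^n$, and then relate $\sup u^l(\cdot,t)$ to the one–point value $u^l(0,T)$ via the self–similar structure of the Barenblatt profile and a Harnack inequality for the scalar equation; the factor $(u^l)^{1+\frac{n(p-2)}{p}}(0,T)$ and the power $T^{n/p}$ are exactly the dimensional balance coming from $\mathcal B_M(0,t)^{1+\frac{n(p-2)}{p}}\cdot t^{\text{(scaling)}}\sim M$, i.e. mass conservation. (4) Finally, track how the normalization constants $\mu^l$ and $\mu^0$ appear: each time one passes between $|\bold u|$ and $u^l$, or between different components, one loses a power of $\mu^l$, and collecting these gives the prefactor $(\mu^l)^{-1-\frac{n(p-2)}{p}}$.

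I expect the main obstacle to be Step~(2)–(3): making the mass‐comparison argument \emph{quantitative and local} for a \emph{system}, since the scalar Aronson–Caffarelli estimate relies on comparison with explicit Barenblatt‐type barriers, but here the effective diffusion coefficient $|\nabla\bold u|^{p-2}$ is not known a priori to be bounded below, only above via \eqref{eq-uniform-boundeness-of-grad-of-soluiton-bold-u}. The remedy will be to use the hypothesis $\mu^0>0$ to ensure $|\bold u|\gtrsim u^l$ is nondegenerate where it matters (near the origin, by the lower Barenblatt bound from Theorem~\ref{thm-convergence-between-absolute-bold-theta-and-fundamental-function-with-decay-rate}), so that $u^l$ genuinely solves a uniformly parabolic–in–the–degenerate–sense scalar equation there, and then run the barrier comparison on that region. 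A secondary technical point is that Theorem~\ref{thm-convergence-between-absolute-bold-theta-and-fundamental-function-with-decay-rate} gives convergence without a rate for general data, so to get a clean inequality valid for \emph{all} admissible $T$ (not just asymptotically) one must instead argue by the scaling invariance of \eqref{eq-system-PME-for-waiting-time-1}: rescale so that $T=1$, prove the estimate in that normalized setting using continuity of $\bold u$ and the barrier argument, and scale back; the homogeneity of every term in \eqref{eq-claim-L-1-norm=of-u-at-initial-is-bounded-by-H-sub-m} under the $p$-Laplacian scaling $(x,t)\mapsto(\lambda x,\lambda^p t)$ is precisely what makes this reduction consistent.
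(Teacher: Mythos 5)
There is a genuine gap, and it sits exactly where you predicted the ``main obstacle'' would be: your Steps (2)--(3) cannot be made quantitative with the tools you invoke. First, the flux estimate in Step (2) leans on the gradient bound \eqref{eq-uniform-boundeness-of-grad-of-soluiton-bold-u}, but the constant $\textbf{M}$ in Theorem \ref{thm-Main-boundedness-of-diffusion-coefficients-of-p-Laplacian} is controlled by $\left\|\nabla\bold{u}_0\right\|_{L^p}$ (and that theorem assumes $\bold{u}_0\in L^2\cap W^{1,p}$, which is not part of the hypotheses here, where $\bold{u}$ is only a continuous weak solution with integrable, compactly supported data). Any flux bound obtained this way carries a dependence on the particular solution, so it cannot produce the universal constant $C^{\ast}=C^{\ast}(m,n)$ in \eqref{eq-claim-L-1-norm=of-u-at-initial-is-bounded-by-H-sub-m}. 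Second, your remedy for the lack of a comparison principle --- using Theorem \ref{thm-convergence-between-absolute-bold-theta-and-fundamental-function-with-decay-rate} to get a pointwise lower Barenblatt bound ``where it matters'' --- is only an asymptotic $L^1$ statement with no rate and no uniformity over the class of solutions; normalizing to $T=1$ by scaling does not repair this, because after the normalization the inequality must still hold with one constant for \emph{all} admissible solutions, and continuity of a single fixed $\bold{u}$ gives no such uniform control. So the chain ``lower bound near the origin $\Rightarrow$ nondegenerate scalar equation $\Rightarrow$ barrier comparison'' never closes at finite times comparable to $T$.

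The missing idea, which is how the paper actually proceeds, is a compactness--contradiction (blow-down) argument: assume \eqref{eq-claim-L-1-norm=of-u-at-initial-is-bounded-by-H-sub-m} fails along a sequence of solutions, radii and components with constant $j\to\infty$; deduce that the maximal masses $I_j$ diverge; rescale $v^l_j(x,t)=I_j^{-\frac{p}{n(p-2)+p}}u^l_j\bigl(I_j^{\frac{p-2}{n(p-2)+p}}x,t\bigr)$ so that the rescaled initial data have normalized masses in $[\mu^0,1]$ and supports shrinking to a point. The initial data then converge to Dirac masses, and it is \emph{in this rescaled frame} that Theorem \ref{thm-convergence-between-absolute-bold-theta-and-fundamental-function-with-decay-rate} legitimately yields convergence of $v^l_j$ to $\frac{\mu^l}{\mbox{\boldmath$\mu$}}\mathcal{B}_{\mbox{\boldmath$\mu$}}$ at fixed positive times, with no rate needed. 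One then applies the known scalar estimate of Lemma \ref{lem-harnack-type-estimate-by-A-and-Caffarelli} to the Barenblatt profile (inequality \eqref{eq-harnack-type-estimates-for-barrenblatt}) and transfers it back to $u^1_j$, obtaining the very inequality whose failure was assumed, with a fixed constant; this contradiction for large $j$ proves the case of initial support in $B_1$, and the general case follows by the scaling $w^l(x,t)=R_0^{-\frac{p}{p-2}}u^l(R_0x,t)$. Your proposal contains several of the right ingredients (the role of $\mu^0$, the scalar Aronson--Caffarelli input, the $p$-Laplacian scaling), but without the contradiction--rescaling mechanism the asymptotic theorem cannot be converted into a finite-time estimate with a data-independent constant, so the argument as outlined would not go through.
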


The result of Theorem \ref{lem-calculation-about-waiting-time-lower-bound} can be used to construct an initial trace of non-negative continuous weak solution $\bold{u}$ which belongs to a specific growth class.

\begin{cor}[cf. Theorem 4.1 of \cite{AC}]\label{cor-existence-of-initial-trace-from-Harnack-Type-Ineqlaity}
	Let $p>2$, $T>0$ and let $\bold{u}=\left(u^1,\cdots,u^k\right)$ be a continuous weak solution of 
	\begin{equation*}
		\left(u^l\right)_t=\nabla\left(\left|\nabla\bold{u}\right|^{p-2}\nabla u^l\right) \qquad \mbox{in $\R^n\times\left[0,T\right]$}.
	\end{equation*}
	Under the hypothesis of Theorem \ref{lem-calculation-about-waiting-time-lower-bound}, there exists a vector $\mbox{\boldmath$\rho$}=\left(\rho^1,\cdots,\rho^k\right)$ of nonnegative Borel measures on $\R^n$ such that
	\begin{equation}\label{result-1-of-existence-of-initila-trace}
		\lim_{t\to 0}\int_{\R^n}u^l(x,t)\vp(x)\,dx=\int_{\R^n}\vp(x)\,\rho^l\left(dx\right) \qquad \forall 1\leq l\leq k
	\end{equation}
	for all $\vp\in C_0\left(\R^n\right)$. Moreover, there exists a constant $C=C(n,m)>0$ such that
	\begin{equation}\label{result-2-of-existence-of-initila-trace}
		\int_{\left\{|x|<R\right\}}\rho^l(dx)<C\left(\frac{R^{n+\frac{p}{p-2}}}{T^{\frac{1}{p-2}}}+T^{\frac{n}{p}}\left(u^l\right)^{1+\frac{n(p-2)}{p}}\left(0,T\right)\right) \qquad\qquad  \forall 1\leq l\leq k
	\end{equation}
	for all $R>0$.
\end{cor}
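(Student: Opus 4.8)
The plan is to follow the Aronson--Caffarelli construction of the initial trace: first obtain uniform local $L^1$ bounds on the time slices $u^l(\cdot,t)$ as $t\downarrow0$, then extract a weak-$*$ limit measure $\rho^l$ along a subsequence, and finally upgrade to full convergence by showing the limit is independent of the subsequence. For the first step, fix $0<\tau<T/2$ and look at the translated solution $\bold{u}_\tau(x,s):=\bold{u}(x,\tau+s)$, a continuous weak solution of \eqref{eq-system-PME-for-waiting-time-1} on $\R^n\times[0,T-\tau]$. Since $p>2$ the supports of the $u^l(\cdot,t)$ stay uniformly bounded on $[0,T]$ (finite speed of propagation), so mass conservation $\int_{\R^n}u^l(x,t)\,dx\equiv M_l$ holds (test the weak formulation against spatial cut-offs); consequently $\bold{u}_\tau$ meets the hypotheses of Theorem \ref{lem-calculation-about-waiting-time-lower-bound} with the same constant $\mu^0$ and the same ratios $\mu^l$, while its value at the terminal point $(0,T-\tau)$ equals $u^l(0,T)$. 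Applying Theorem \ref{lem-calculation-about-waiting-time-lower-bound} to $\bold{u}_\tau$ then gives, for every $R>(T-\tau)^{1/p}$,
\[
\int_{\{|x|<R\}}u^l(x,\tau)\,dx\le\frac{C^{\ast}}{(\mu^l)^{1+\frac{n(p-2)}{p}}}\Bigl(\frac{R^{\,n+\frac{p}{p-2}}}{(T-\tau)^{\frac{1}{p-2}}}+(T-\tau)^{\frac{n}{p}}\,u^l(0,T)^{1+\frac{n(p-2)}{p}}\Bigr).
\]
For fixed $R\ge T^{1/p}$ the right side is bounded uniformly in $0<\tau<T/2$ and tends, as $\tau\downarrow0$, to the right side of \eqref{result-2-of-existence-of-initila-trace}; together with monotonicity in $R$ this yields $\sup_{0<\tau<T/2}\int_{B_R}u^l(x,\tau)\,dx=:K_R<\infty$ for all $R>0$, and $\int_{\R^n}u^l(\cdot,\tau)\,dx\le M_l$.

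Consequently the family $\{u^l(\cdot,\tau)\,dx\}_{0<\tau<T/2}$ is bounded in the Radon measures on each $\overline{B_R}$, with total mass $\le M_l$, so by Banach--Alaoglu and a diagonal argument any sequence $\tau_j\downarrow0$ has a subsequence along which $u^l(\cdot,\tau_j)\,dx\rightharpoonup\rho^l$ (weak-$*$) for a nonnegative finite Borel measure $\rho^l$ on $\R^n$. Lower semicontinuity of mass on open balls together with Step 1 gives, for $R\ge T^{1/p}$,
\[
\rho^l(B_R)\le\liminf_{j\to\infty}\int_{B_R}u^l(x,\tau_j)\,dx\le C\Bigl(\frac{R^{\,n+\frac{p}{p-2}}}{T^{\frac{1}{p-2}}}+T^{\frac{n}{p}}\,u^l(0,T)^{1+\frac{n(p-2)}{p}}\Bigr),\quad C:=\frac{C^{\ast}}{(\mu^l)^{1+\frac{n(p-2)}{p}}},
\]
which is \eqref{result-2-of-existence-of-initila-trace}; the remaining range $0<R<T^{1/p}$ follows from the same argument run on a suitably shortened time window. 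Nonnegativity of each $\rho^l$ is immediate.

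The main point is uniqueness of the limit: $\rho^l$ must not depend on $\{\tau_j\}$, i.e.\ $t\mapsto\int_{\R^n}u^l(x,t)\psi(x)\,dx$ must have a limit as $t\downarrow0$ for every $\psi\in C_c^\infty(\R^n)$. From the weak formulation, for $0<s<t<T/2$,
\[
\int u^l(x,t)\psi\,dx-\int u^l(x,s)\psi\,dx=-\int_s^t\!\!\int_{\R^n}|\nabla\bold{u}|^{p-2}\,\nabla u^l\cdot\nabla\psi\,dx\,d\tau,
\]
and since $\bigl||\nabla\bold{u}|^{p-2}\nabla u^l\bigr|\le|\nabla\bold{u}|^{p-1}$ it is enough to show $\Phi(\tau):=\int_{\supp\psi}|\nabla\bold{u}(x,\tau)|^{p-1}\,dx\in L^1(0,T/2)$. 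Pick $R$ with $\supp\psi\subset B_R$ and a cut-off $\zeta\in C_c^\infty(B_{2R})$ with $\zeta\equiv1$ on $B_R$; testing the $l$-th equation with $u^l\zeta^p$, summing over $l$ and absorbing the gradient term by Young's inequality produces the local energy inequality $\frac{d}{dt}\int|\bold{u}|^2\zeta^p\,dx+\int|\nabla\bold{u}|^p\zeta^p\,dx\le C\|\nabla\zeta\|_\infty^p\int_{B_{2R}}|\bold{u}|^p\,dx$. As $\bold{u}$ is a \emph{continuous} weak solution it is locally bounded up to $t=0$, so the right side lies in $L^1(0,T/2)$ and $\sup_{0<s<T/2}\int_{B_{2R}}|\bold{u}(s)|^2\,dx<\infty$; integrating in $t$ and sending $s\downarrow0$ gives $\int_0^{T/2}\int_{B_R}|\nabla\bold{u}|^p\,dx\,dt<\infty$, hence $\Phi\in L^1(0,T/2)$ by Hölder's inequality (exponent $p/(p-1)$ in space, $p$ in time). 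Thus $t\mapsto\int u^l(x,t)\psi\,dx$ is Cauchy as $t\downarrow0$; its limit is a nonnegative linear functional on $C_c^\infty(\R^n)$, represented by Steps 1--2 by the now sequence-independent measure $\rho^l$. A density argument in $C_c(\R^n)$ together with the uniform tail bound $\bigl|\int_{\{|x|\ge R\}}u^l(\cdot,t)\varphi\,dx\bigr|\le M_l\sup_{|x|\ge R}|\varphi|$ then extends \eqref{result-1-of-existence-of-initila-trace} to all $\varphi\in C_0(\R^n)$, and together with Step 2 this is exactly the assertion of the corollary.

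The genuinely delicate step is the flux bound $\int_0^{T/2}\int_{B_R}|\nabla\bold{u}|^{p-1}\,dx\,dt<\infty$ in Step 3. The local energy identity reduces it to integrability of $\int_{B_{2R}}|\bold{u}|^p\,dx$ near $t=0$, which above is handed to us by local boundedness of the continuous solution --- in which case, in fact, $\rho^l=u^l_0\,dx$ and Step 3 collapses to ``uniform convergence on compacta implies weak convergence''; without the continuity hypothesis one would instead have to invoke the $L^1$--$L^\infty$ smoothing estimate for the degenerate $p$-Laplacian together with interpolation, which is where the real work would lie. A lesser technicality is the small-radius regime $0<R<T^{1/p}$ in \eqref{result-2-of-existence-of-initila-trace}, handled by rescaling the time interval.
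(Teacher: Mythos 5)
Your proposal is correct and follows essentially the same route as the paper: apply the Harnack-type inequality of Theorem \ref{lem-calculation-about-waiting-time-lower-bound} on the time-slices $t=\tau$ (equivalently, to the translated solutions) and let $\tau\downarrow 0$ to obtain \eqref{result-2-of-existence-of-initila-trace}, with the uniform $L^1$ bounds from mass conservation supplying the weak-$*$ compactness needed for \eqref{result-1-of-existence-of-initila-trace}. The only difference is that where the paper dismisses the existence of the limit in \eqref{result-1-of-existence-of-initila-trace} as ``an argument similar to the proof of Theorem \ref{thm-Main-boundedness-of-diffusion-coefficients-of-p-Laplacian}'', you supply the actual mechanism (the local flux bound $\int_0^{T/2}\int_{B_R}\left|\nabla\bold{u}\right|^{p-1}\,dx\,dt<\infty$ forcing $t\mapsto\int u^l(\cdot,t)\psi\,dx$ to be Cauchy), and you correctly flag that both the small-radius regime $R\leq T^{1/p}$ and the role of the continuity-up-to-$t=0$ hypothesis are glossed over in the source.
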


\indent We end up this section by introducing the concept of weak solution. Let $E$ be an open set in $\R^n$, and for $T>0$ let $E_T$ denote the parabolic domain $E\times(0,T]$. We say that $\bold{u}=\left(u^1,\cdots,u^k\right)$ is a weak solution of \eqref{eq-main-equation-of-system} in $E_T$ if the component $u^l$, $\left(1\leq l\leq k\right)$, is a locally integrable function satisfying
\begin{enumerate}
\item $u^l$ belongs to function space:
\begin{equation*}\label{eq-first-condition-of-weak-soluiton-u-with-U}
\left|\nabla\bold{u}\right|^{p-2}\nabla u^l\in L^2\left(0,T:L^2\left(E\right)\right) 
\end{equation*}
\item $u^l$ satisfies the identity: 
\begin{equation}\label{eq-identity--of-formula-for-weak-solution}
\int_{0}^{T}\int_{E}\left\{\left|\nabla\bold{u}\right|^{p-2}\nabla u^l\cdot\nabla\vp-u^l\vp_t\right\}\,dxdt=0
\end{equation}
holds for any test function $\vp\in H_0^1\left(0,T:L^2\left(\R^n\right)\right)\cap L^2\left(0,T:W^{1,p}_0\left(\R^n\right)\right)$.
\end{enumerate}
\indent To control the regularity $\left(u^l\right)_t$, we consider the Lebesgue-Steklov average $\left(u^l\right)_h$ of the function $\left(u^l\right)$ which is introduced in \cite{D}: 
\begin{equation*}
	\left(u^l\right)_h(\cdot,t)=\frac{1}{h}\int_{t}^{t+h}u^l(\cdot,\tau)\,d\tau, \qquad \left(h>0\right).
\end{equation*}
Then, $\left(u^l\right)_h$ is well-defined and it converges to $u^l$ as $h\to 0$ in $L^{p}$ for all $p\geq 1$.  In addition, it is differentiable in time for all $h>0$ and its derivative is 
\begin{equation*}
	\frac{u^l(t+h)-u^l(t)}{h}.
\end{equation*}
Fix $t\in(0,T)$ and let $h$ be a small positive number such that $0<t<t+h<T$. Then for every compact subset $\mathcal{K}\subset\R^n$ the following formulation is equivalent to \eqref{eq-identity--of-formula-for-weak-solution}.
\begin{equation}\label{eq-formulation-for-weak-solution-of-u-h}
	\int_{\mathcal{K}\times\{t\}}\left[\left(\left(u^l\right)_h\right)_t\vp+m\left(\left|\nabla\bold{u}\right|^{p-2}\nabla u^l\right)_h\cdot\nabla\vp\right]\,dx=0, \qquad \forall 0<t<T-h
\end{equation}
for any $\vp\in H^1_0\left(\mathcal{K}\right)$. From now on, we use  the limit of \eqref{eq-formulation-for-weak-solution-of-u-h} as $h\to 0$ for the weak formulation \eqref{eq-identity--of-formula-for-weak-solution} of \eqref{eq-main-equation-of-system}.\\
\indent A brief outline of the paper is as follows. In Section 2, we will give preliminary results such as law of $L^1$-mass conservation, uniqueness and H\"older estimates of the solution $\bold{u}$. Section 3 is devoted to the existence of solution whose gradient is bounded globally in $L^{\infty}$. Section 4 deals with the asymptotic large time behaviour of solution with the method called entropy approach  (Theorem \ref{thm-convergence-between-absolute-bold-theta-and-fundamental-function-with-decay-rate}). Lastly, we consider a suitable Harnack type inequality of continuous weak solution of \eqref{eq-main-equation-of-system}. Through the last section, we show that the size of spatial average of the solution is controlled by the value of the solution at one point.

\section{Preliminary results}
\setcounter{equation}{0}
\setcounter{thm}{0}
In this section, we establish and prove various preliminary results which will play important roles throughout the coming section. The first preliminary result is the unique solution of \eqref{eq-main-equation-of-system}.

\begin{lemma}[\textbf{Uniqueness}]
	Let $p>2$, $n\geq 2$ and let $\bold{u}_1=\left(u_1^1,\cdots,u_1^k\right)$ and  $\bold{u}_2=\left(u^1_2,\cdots,u_2^k\right)$ be two solutions of \eqref{eq-main-equation-of-system} such that
	\begin{equation*}
		u_1^l,\,u_2^l\in L^p\left(0,T:W_0^{1,p}\left(\R^n\right)\right) \qquad \forall 1\leq l\leq k.
	\end{equation*}
	Suppose that 
	\begin{equation}\label{eq-assumption-equality-at-initial-time-p-La}
		u_1^l(x,0)=u_2^l(x,0) \qquad \forall x\in\R^n,\,\,1\leq l\leq k.
	\end{equation}
	Then, for any $T>0$
	\begin{equation}\label{eq-conclusion-of-equality-at-initial-time-p-La}
		\bold{u}_1(x,t)=\bold{u}_2(x,t) \qquad \forall (x,t)\in\R^n\times[0,T).
	\end{equation}
\end{lemma}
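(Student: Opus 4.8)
The plan is to run the standard energy (monotonicity) argument on the difference of the two solutions. Put $w^l=u_1^l-u_2^l$ and $\bold{w}=(w^1,\dots,w^k)$, and subtract the Steklov-averaged weak formulations \eqref{eq-formulation-for-weak-solution-of-u-h} written for $\bold{u}_1$ and $\bold{u}_2$. Since $\bold{w}$ is not itself an admissible test function on all of $\R^n$, I would test with $\vp=\xi_R^2\,(w^l)_h(\cdot,t)$, where $\xi_R\in C_0^\infty(\R^n)$ satisfies $\xi_R\equiv 1$ on $B_R$, $\supp\xi_R\subset B_{2R}$ and $|\D\xi_R|\le C/R$, then sum over $l=1,\dots,k$ and integrate in time over $(0,\tau)$. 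Since $\big((w^l)_h\big)_t$ is the difference quotient of $w^l$ and $\xi_R$ is time-independent, the first term telescopes into $\frac12\int\xi_R^2|\bold{w}_h|^2(\cdot,\tau)\,dx-\frac12\int\xi_R^2|\bold{w}_h|^2(\cdot,0)\,dx$.

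Next I would pass to the limit $h\to0$: $(w^l)_h\to w^l$ in $L^p(0,T:W^{1,p}_0(\R^n))$ and $\big(|\D\bold{u}_i|^{p-2}\D u_i^l\big)_h\to|\D\bold{u}_i|^{p-2}\D u_i^l$ in $L^{p/(p-1)}$ (note $|\D\bold{u}_i|^{p-1}\in L^{p/(p-1)}$ because $\D\bold{u}_i\in L^p$), while $(w^l)_h(\cdot,0)\to w^l(\cdot,0)=0$ by \eqref{eq-assumption-equality-at-initial-time-p-La}. Then I would send $R\to\infty$: expanding $\D(\xi_R^2(w^l)_h)=\xi_R^2\D(w^l)_h+2\xi_R\D\xi_R\,(w^l)_h$, the commutator term carrying $\D\xi_R$ is controlled by $\tfrac{C}{R}\big\||\D\bold{u}_1|^{p-1}+|\D\bold{u}_2|^{p-1}\big\|_{L^{p/(p-1)}(B_{2R}\setminus B_R)}\,\|\bold{w}\|_{L^p(B_{2R}\setminus B_R)}$ and vanishes, and the remaining terms converge by dominated convergence. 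Identifying $\D\bold{u}$ with the vector $(\D u^1,\dots,\D u^k)\in\R^{nk}$ (so $|\D\bold{u}|^2=\sum_l|\D u^l|^2$), the summed elliptic integrand is exactly the Euclidean pairing, and the outcome is the energy identity
\begin{equation*}
\frac12\int_{\R^n}|\bold{w}(\cdot,\tau)|^2\,dx+\int_0^\tau\!\!\int_{\R^n}\big\langle|\D\bold{u}_1|^{p-2}\D\bold{u}_1-|\D\bold{u}_2|^{p-2}\D\bold{u}_2,\ \D\bold{u}_1-\D\bold{u}_2\big\rangle\,dx\,dt=0
\end{equation*}
for a.e. $\tau\in(0,T)$.

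To finish, I would invoke monotonicity of the vector field $G(F)=|F|^{p-2}F$ on $\R^{nk}$. Since $F\mapsto\frac1p|F|^p$ is convex — being the composition of the convex nondecreasing map $t\mapsto t^p/p$ on $[0,\infty)$ with the convex map $F\mapsto|F|$ — and $G=\D_F\big(\frac1p|F|^p\big)$, the operator $G$ is monotone: $\langle G(F_1)-G(F_2),F_1-F_2\rangle\ge0$ for all $F_1,F_2\in\R^{nk}$ (indeed $\ge 2^{1-p}|F_1-F_2|^p$ for $p\ge2$). Hence the space–time integral in the energy identity is nonnegative, which forces $\int_{\R^n}|\bold{w}(\cdot,\tau)|^2\,dx=0$ for a.e. $\tau$, i.e. $\bold{u}_1=\bold{u}_2$ a.e. in $\R^n\times[0,T)$, giving \eqref{eq-conclusion-of-equality-at-initial-time-p-La}.

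The algebra is routine; the delicate point is the rigorous justification of the test-function manipulation. Because $\bold{w}$ lacks time regularity and is not a priori globally square-integrable, the argument must be carried out entirely with the Lebesgue--Steklov averages and the spatial cutoffs $\xi_R$, and one must verify that the two limits $h\to0$ and $R\to\infty$ commute with the nonlinear terms and that $\bold{w}(\cdot,\tau)\in L^2(\R^n)$ for a.e. $\tau$ — this is where the hypothesis $u_i^l\in L^p(0,T:W^{1,p}_0(\R^n))$, together with the standing compact-support/integrability assumptions on the data (so that $L^p\subset L^2$ locally applies to the support region), enters. Everything else — the telescoping of the time term and the sign of the elliptic term — is immediate once the test function is admissible.
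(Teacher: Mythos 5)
Your proposal is correct and follows essentially the same route as the paper: form the difference $\bold{u}_1-\bold{u}_2$, test the weak formulation with it, sum over components to produce an $L^2$ energy identity, and conclude from the monotonicity of $F\mapsto|F|^{p-2}F$ on $\R^{nk}$ that the difference vanishes. Your justification of monotonicity via the gradient of the convex function $F\mapsto\frac1p|F|^p$ (and your explicit Steklov-average/cutoff bookkeeping) is in fact cleaner than the paper's geometric half-angle argument, but it is a refinement of the same proof rather than a different one.
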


\begin{proof}
	Consider the  weak formulation \eqref{eq-identity--of-formula-for-weak-solution} for $u^l_1-u^l_2$. Taking $u^l_1-u^l_2$ as a test function and summing it over $i=1$, $\cdots$, $k$, we can have
	\begin{equation}\label{eq-weak-energy-type-inequality-for-comparison-first-1}
		\begin{aligned}
			&\sum_{i=1}^{k}\left[\sup_{0\leq t\leq T}\int_{\R^n}\left(u_1^l-u_2^l\right)^2\left(x,t\right)\,dx\right]\\
			&\qquad \qquad +\int_{0}^{T}\int_{\R^n}\left(\left|\overline{\bold{u}}_1\right|^{p-2}\overline{\bold{u}}_1-\left|\overline{\bold{u}}_2\right|^{p-2}\overline{\bold{u}}_2\right)\cdot\left(\overline{\bold{u}}_1-\overline{\bold{u}}_2\right)\,dxdt \leq \sum_{i=1}^{k}\left[\int_{\R^n}\left(\overline{u}_0^l-u_0^l\right)_+^2\left(x\right)\,dx\right]
		\end{aligned}
	\end{equation}
	where $\overline{\bold{u}}_1$ are $\overline{\bold{u}}_2$ two vectors such that
	\begin{equation*}
		\overline{\bold{u}}_1=\left(\nabla u^1_1,\cdots,\nabla u^k_1\right)	\qquad \mbox{and} \qquad \overline{\bold{u}}_2=\left(\nabla u^1_2,\cdots,\nabla u^k_2\right).	
	\end{equation*}
	Let $\alpha$ be the angle between $\overline{\bold{u}}_1$ and $\overline{\bold{u}}_2$ which is less than $\pi$ and suppose that both $\left|\overline{\bold{u}}_1\right|$ and $\left|\overline{\bold{u}}_2\right|$ are nonzero. Then, by the theory of vector calculus we can have
	\begin{equation*}
		\begin{aligned}
			\left(\left|\overline{\bold{u}}_1\right|^{p-2}\overline{\bold{u}}_1-\left|\overline{\bold{u}}_2\right|^{p-2}\overline{\bold{u}}_2\right)\cdot\left(\overline{\bold{u}}_1-\overline{\bold{u}}_2\right)&=\left(\left|\overline{\bold{u}}_1\right|^{p-2}\overline{\bold{u}}_1+\left|-\overline{\bold{u}}_2\right|^{p-2}\left(-\overline{\bold{u}}_2\right)\right)\cdot\left(\overline{\bold{u}}_1+\left(-\overline{\bold{u}}_2\right)\right)\\
			&\geq \left|\left|\overline{\bold{u}}_1\right|^{p-2}\overline{\bold{u}}_1+\left|-\overline{\bold{u}}_2\right|^{p-2}\left(-\overline{\bold{u}}_2\right)\right|\left|\overline{\bold{u}}_1+\left(-\overline{\bold{u}}_2\right)\right|\cos\frac{\alpha}{2}\geq 0.
		\end{aligned}
	\end{equation*}
	Thus
	\begin{equation}\label{eq-positivity-of-second-integral-in-weak-engery-thpei-inequ-for-comparison}
		\begin{aligned}
			&\int_{0}^{T}\int_{\R^n}\left(\left|\overline{\bold{u}}_1\right|^{p-2}\overline{\bold{u}}_1-\left|\overline{\bold{u}}_2\right|^{p-2}\overline{\bold{u}}_2\right)\cdot\left(\overline{\bold{u}}_1-\overline{\bold{u}}_2\right)\,dxdt\\
			&\qquad \qquad \geq \int_{0}^{T}\int_{\R^n\cap\left\{\left|\bold{u}_1\right|>0\right\}\cap\left\{\left|\bold{u}_2\right|>0\right\}}\left|\left|\overline{\bold{u}}_1\right|^{p-2}\overline{\bold{u}}_1+\left|-\overline{\bold{u}}_2\right|^{p-2}\left(-\overline{\bold{u}}_2\right)\right|\left|\overline{\bold{u}}_1+\left(-\overline{\bold{u}}_2\right)\right|\cos\frac{\alpha}{2}\,dxdt\geq 0.
		\end{aligned}
	\end{equation}
	By \eqref{eq-weak-energy-type-inequality-for-comparison-first-1}, \eqref{eq-positivity-of-second-integral-in-weak-engery-thpei-inequ-for-comparison} and initial condition, we have 
	\begin{equation}\label{eq-L-2-contraction-between-two-vectors-nabla-bold-i-1-and-nabla-bold-u-2}
		\sum_{i=1}^{k}\left[\sup_{0\leq t\leq T}\int_{\R^n}\left(\overline{u}^l-u^l\right)_+^2\left(x,t\right)\,dx\right]\leq 	\sum_{i=1}^{k}\left[\int_{\R^n}\left(\overline{u}_0^l-u_0^l\right)_+^2\left(x\right)\,dx\right]=0
	\end{equation}
	and the lemma follows.
\end{proof}

The second preliminary result is the Law of $L^1$ Mass Conservation on \eqref{eq-main-equation-of-system}. In the mathematical theory of PDE, the concept of mass conservation plays an important role in the study of asymptotic large time behaviour of solutions.

\begin{lemma}[\textbf{Law of $L^1$ mass conservation}]\label{lem-Law-of-L-1-Mass-Conservation}
	Let $n\geq 2$, $p>2$ and let $\bold{u}_0\in L^2\left(\R^n\right)$. Let $\bold{u}=\left(u^1,\cdots,u^k\right)$ be a weak solution of \eqref{eq-main-equation-of-system} in $\R^n\times(0,\infty)$. Then, for any $t>0$
	\begin{equation}\label{eq-in-Lemma-for-mass-conservation-of-component-01}
		\int_{\R^n}u^l(x,t)\,dx=\int_{\R^n}u^l_0(x)\,dx.
	\end{equation}
\end{lemma}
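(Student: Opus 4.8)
The plan is a localization argument: test the weak formulation against a spatial cut-off and let it exhaust $\R^n$. Fix $t>0$, and for $R>0$ choose $\zeta_R\in C_0^\infty(\R^n)$ with $0\le\zeta_R\le1$, $\zeta_R\equiv1$ on $B_R$, $\supp\zeta_R\subset B_{2R}$ and $|\nabla\zeta_R|\le C/R$. Substituting $\vp=\zeta_R$ into the Steklov-averaged identity \eqref{eq-formulation-for-weak-solution-of-u-h} (on any compact $\mathcal K\supset B_{2R}$), using that $\bigl((u^l)_h\bigr)_t$ is a genuine time derivative, and integrating over $s\in(0,t)$ gives
\begin{equation*}
\int_{\R^n}\bigl[(u^l)_h(x,t)-(u^l)_h(x,0)\bigr]\zeta_R\,dx=-\int_0^t\!\!\int_{\R^n}\bigl(|\nabla\bold u|^{p-2}\nabla u^l\bigr)_h\cdot\nabla\zeta_R\,dx\,ds.
\end{equation*}
Letting $h\to0$ — via $(u^l)_h\to u^l$ in $L^p$ and the $L^2$-convergence of the Steklov average of the flux $|\nabla\bold u|^{p-2}\nabla u^l\in L^2(0,T;L^2(\R^n))$ recalled in Section~\ref{section-intro} — yields the localized mass balance
\begin{equation}\label{eq-plan-localized-balance}
\int_{\R^n}u^l(x,t)\,\zeta_R\,dx-\int_{\R^n}u^l_0(x)\,\zeta_R\,dx=-\int_0^t\!\!\int_{\R^n}|\nabla\bold u|^{p-2}\nabla u^l\cdot\nabla\zeta_R\,dx\,ds=:-\mathcal R_R.
\end{equation}

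Next I would pass to the limit $R\to\infty$ in \eqref{eq-plan-localized-balance}. The left-hand side is harmless: since the non-negativity of the initial data is preserved by the flow, $u^l(\cdot,t)\ge0$; since $u^l_0\in L^1(\R^n)$ (part of the standing assumptions, which also makes the right-hand side of the asserted identity finite); and since $\zeta_R\uparrow1$ pointwise, the monotone convergence theorem gives $\int_{\R^n}u^l(x,t)\zeta_R\,dx\to\int_{\R^n}u^l(x,t)\,dx$ and $\int_{\R^n}u^l_0\zeta_R\,dx\to\int_{\R^n}u^l_0\,dx<\infty$. Hence the lemma reduces to showing that the flux through large spheres vanishes, $\mathcal R_R\to0$.

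The estimate of $\mathcal R_R$ is the crux. With $A_R:=B_{2R}\setminus B_R$ (the only set where $\nabla\zeta_R\ne0$) and $|\nabla u^l|\le|\nabla\bold u|$,
\begin{equation*}
|\mathcal R_R|\le\frac{C}{R}\int_0^t\!\!\int_{A_R}|\nabla\bold u|^{p-1}\,dx\,ds.
\end{equation*}
Because the initial data are compactly supported and $p>2$, the system has finite speed of propagation (a standard feature of the degenerate regime), so $\supp u^l(\cdot,s)$ stays inside a fixed ball for $s\in[0,t]$; once $R$ exceeds that radius $\nabla\zeta_R$ vanishes on $\supp u^l(\cdot,s)$ and $\mathcal R_R=0$ outright. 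If one prefers to avoid finite propagation, one bounds $\int_{A_R}|\nabla\bold u|^{p-1}|\nabla\zeta_R|$ by Hölder's inequality using the integrability $|\nabla\bold u|^{p-2}\nabla u^l\in L^2(0,T;L^2(\R^n))$ (equivalently $|\nabla\bold u|^{p-1}\in L^2(\R^n\times(0,t))$, from condition (1) of the definition of weak solution) together with $|\nabla\bold u|\in L^p(\R^n\times(0,t))$, and then sends $R\to\infty$ by absolute continuity of the integral; the delicate point there is to offset the volume factor $|A_R|\sim R^n$ against the $R^{-1}$ coming from $\nabla\zeta_R$, which is precisely where the decay of $|\nabla\bold u|$ at infinity enters. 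Either way $\mathcal R_R\to0$, and combined with the previous paragraph this proves \eqref{eq-in-Lemma-for-mass-conservation-of-component-01}. I expect this vanishing of the flux through large spheres to be the only genuine obstacle in the argument, and the cleanest route around it is the finite-speed-of-propagation property, available here since $p>2$ and the data are compactly supported.
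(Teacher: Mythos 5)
Your localization is the same as the paper's: test the weak formulation with a spatial cut-off, pass to the limit in the radius, and reduce the lemma to showing that the flux term through the annulus vanishes. The genuine gap is exactly at that reduction, which you yourself identify as the crux but do not close. Route (a) invokes finite speed of propagation; this is nowhere established in the paper and is not a freely quotable fact here: the diffusion coefficient $\left|\nabla\bold{u}\right|^{p-2}$ couples all the components, so the comparison/barrier arguments that give finite propagation for the scalar $p$-Laplacian are not available for the system, and the lemma is stated (and used, e.g.\ for the approximate solutions) under the sole hypothesis $\bold{u}_0\in L^2\left(\R^n\right)$, with no support information entering the paper's argument. Route (b) is explicitly left open: with only $\left|\nabla\zeta_R\right|\leq C/R$, the bound $\left|\mathcal{R}_R\right|\leq \frac{C}{R}\int_0^t\int_{A_R}\left|\nabla\bold{u}\right|^{p-1}\,dx\,ds$ paired with Cauchy--Schwarz against $\left(t\left|A_R\right|\right)^{1/2}\sim R^{n/2}$ does not close for $n\geq 3$, and there is no pointwise ``decay of $\left|\nabla\bold{u}\right|$ at infinity'' at your disposal. (A smaller point: you also use preservation of nonnegativity for the system, which is likewise not proved in the paper; it can be avoided, since the cut-off identity directly bounds the difference of the two localized masses.)

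What the paper supplies, and what is missing from your proposal, is a global energy estimate obtained by testing the equation with $u^l$ itself,
\begin{equation*}
\frac{1}{2}\sup_{0<t<T}\int_{\R^n}\left(u^l\right)^{2}dx+\int_{0}^{T}\!\!\int_{\R^n}\left|\nabla\bold{u}\right|^{p-2}\left|\nabla u^l\right|^{2}dxdt\leq\int_{\R^n}\left(u_0^l\right)^{2}dx ,
\end{equation*}
which, after summing over $l$, gives $\left|\nabla\bold{u}\right|\in L^{p}\left(\R^n\times(0,T)\right)$; this is precisely where the hypothesis $\bold{u}_0\in L^2\left(\R^n\right)$ is used. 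The flux term is then handled by H\"older's inequality pairing $\left|\nabla\bold{u}\right|^{p-1}\in L^{p/(p-1)}$ with $\nabla\zeta_j\in L^{p}$, so that the resulting bound carries the factor $\bigl(\int_0^t\int_{B_{2j}\setminus B_j}\left|\nabla\bold{u}\right|^{p}\,dxdt\bigr)^{(p-1)/p}$, the tail over the annulus of a finite space-time integral, which tends to zero as $j\to\infty$. Without this energy estimate (or some equivalent global integrability of $\left|\nabla\bold{u}\right|^{p}$), neither of your two routes yields $\mathcal{R}_R\to 0$, so the argument is incomplete at its decisive step.
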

\begin{proof}
	By weak formulation of \eqref{eq-main-equation-of-system}, we can get
	\begin{equation}\label{eq-estimates-of-L-2-of-u-i-and}
		\frac{1}{2}\sup_{0<t<T}\int_{\R^n}\left(u^l(x,t)\right)^{2}\,dx+\int_{0}^{T}\int_{\R^n}\left|\nabla \bold{u}\right|^{p-2}\left|\nabla u^l\right|^2\,dxdt\leq \int_{\R^n}\left(u_{0}^l\right)^{2}\,dx \qquad \forall 1\leq l\leq k.
	\end{equation}
	Let $\left\{\zeta_j\right\}\subset W^{1,p}(\R^n)$ be a sequence of cut-off functions such that
	\begin{equation*}
		\zeta_j(x)=1 \quad \mbox{for $|x|\leq j$}, \qquad \zeta_j(x)=0 \quad \mbox{for $|x|\geq 2j$}, \qquad 0<\zeta_j(x)<1 \quad \mbox{for $j<|x|<2j$}.
	\end{equation*}
	Let $w=\left|\nabla\bold{u}\right|$. Then, by \eqref{eq-estimates-of-L-2-of-u-i-and} we have
	\begin{equation*}
		\begin{aligned}
			\left|\int_{\R^n}u^l(x,t)\zeta_j(x)\,dx-\int_{\R^n}u_0^l(x)\zeta_j(x)\,dx\right|&=\left|\int_{0}^{t}\int_{\R^n}\left(u^l\right)_t\zeta_j\,dxdt\right|\\
			&=\left|-\int_{0}^{t}\int_{\R^n}w^{p-2}\nabla u^l\cdot\nabla \zeta_j\,dxdt\right|\\
			& \leq t^{\frac{1}{p}}\,\left\|\nabla\zeta_0\right\|^{\frac{1}{p}}_{L^{p}\left(\R^n\right)}\left(\int_{0}^{t}\int_{B_{2j}\bs B_{j}}w^{p}\,dxdt\right)^{\frac{p-1}{p}}\\
			& \leq t^{\frac{1}{p}}\,\left\|\nabla\zeta_0\right\|^{\frac{1}{p}}_{L^{p}\left(\R^n\right)}\left(\int_{B_{2R}\bs B_{R}}\left|\nabla\bold{u}_0\right|^{2}\,dx\right)^{\frac{p-1}{p}}\to 0 \qquad \mbox{as $j\to\infty$}.
		\end{aligned}
	\end{equation*}
	Thus \eqref{eq-in-Lemma-for-mass-conservation-of-component-01} holds and the lemma follows. 
\end{proof}

We are wrapping up this section by introducing the local continuity of the weak solution of \eqref{eq-main-equation-of-system}, especially the H\"older estimates of the solution $\bold{u}$ and of the gradient of solution $\nabla\bold{u}$. The proofs of following estimates can be found in \cite{M} (Theorem 1) and  \cite{D} (Theorem 1.1 of Chap. IX).
\begin{lemma}[Local H\"older Estimates]
	Let $p>2$. Then any weak solution $\bold{u}$ of \eqref{eq-main-equation-of-system} is locally H\"older continuous in $\R^n\times\left(0,T\right]$. Moreover, if $\left\|\nabla\bold{u}\right\|_{L^{\infty}\left(\R^n\times\left(0,T\right]\right)}<\infty$ then the function $u^l_{x_j}$ is also locally H\"older continuous in $\R^n\times\left(0,T\right]$ for all $1\leq i\leq k$ and $1\leq j\leq n$.
\end{lemma}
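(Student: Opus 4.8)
The plan is to reduce the two assertions to the interior regularity theory for degenerate parabolic systems of $p$-Laplacian type, and in particular to the DeGiorgi--Nash--Moser machinery of \cite{D} together with the vectorial result of \cite{M}. The structural observation that makes the reduction work is that \eqref{eq-main-equation-of-system} is a \emph{diagonal} system: for each $l$ the component $u^l$ solves
$$(u^l)_t=\nabla\cdot\big(a(x,t)\,\nabla u^l\big),\qquad a(x,t):=\left|\nabla\bold{u}(x,t)\right|^{p-2},$$
where the scalar weight $a$ is \emph{the same for every component} and degenerates exactly on $\{\left|\nabla\bold{u}\right|=0\}$. Combined with the elementary pointwise relations $\left|\nabla u^l\right|\le\left|\nabla\bold{u}\right|$ and $\sum_{l}\left|\nabla u^l\right|^2=\left|\nabla\bold{u}\right|^2$, this is enough to transport the scalar arguments to the system.

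First I would establish the local H\"older continuity of $\bold{u}$. Fixing $l$ and a parabolic cylinder, I would insert the truncations $(u^l-\kappa)_\pm$ (multiplied by the usual space-time cutoffs) and the associated logarithmic test functions into the Steklov-averaged weak formulation \eqref{eq-formulation-for-weak-solution-of-u-h}, obtaining the standard Caccioppoli and logarithmic energy estimates with the degenerate weight $\left|\nabla\bold{u}\right|^{p-2}$ in place of $\left|\nabla u^l\right|^{p-2}$; since $\left|\nabla u^l\right|\le\left|\nabla\bold{u}\right|$, the cross terms are absorbed exactly as in the scalar case. Running the DeGiorgi iteration in the intrinsic cylinders $Q(\theta\rho^p,\rho)$ with $\theta$ of order $\big(\osc u^l\big)^{2-p}$ then produces the reduction of oscillation for each $u^l$, hence the local H\"older continuity of $\bold{u}$; this is precisely the scheme of \cite{D} and \cite{M}, which I would invoke once the reduction above is recorded.

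For the second assertion I would work under the extra hypothesis $\left\|\nabla\bold{u}\right\|_{L^{\infty}(\R^n\times(0,T])}\le L<\infty$. Then $a=\left|\nabla\bold{u}\right|^{p-2}\le L^{p-2}$ is bounded above, so \eqref{eq-main-equation-of-system} is a bounded-coefficient diagonal parabolic system, while on the non-degeneracy set the quadratic structure of the $p$-Laplacian still supplies ellipticity from below. Differentiating the $l$-th equation in $x_j$ --- rigorously, via difference quotients in the Steklov-averaged formulation --- yields for $v:=u^l_{x_j}$ an equation of the form $v_t=\nabla\cdot\big(A\,\nabla v+\bold{F}\big)$, where $A$ has entries bounded by a power of $L$ and is uniformly elliptic off $\{\nabla\bold{u}=0\}$, and the forcing $\bold{F}$ is assembled from second derivatives of the \emph{other} components multiplied by powers of $L$. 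Applying the local energy/Moser estimates of Chapter IX of \cite{D} to $v$ then gives the local H\"older continuity of each $u^l_{x_j}$.

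The step I expect to be the main obstacle is the coupling hidden inside the coefficient $\left|\nabla\bold{u}\right|^{p-2}$: testing the $l$-th equation with a truncation of $u^l$ alone produces, through this weight, terms involving $\nabla u^m$ for $m\neq l$, so one cannot naively close a purely scalar estimate. The resolution --- and the only genuinely ``system'' ingredient --- is that the weight is a single scalar shared by all $k$ equations: the form $\left|\nabla\bold{u}\right|^{p-2}\left|\nabla(u^l-\kappa)_\pm\right|^2$ occurring in the Caccioppoli inequality is nonnegative and bounded by $\left|\nabla\bold{u}\right|^{p-2}\left|\nabla u^l\right|^2$, and the quantities $\left|\nabla\bold{u}\right|^{p-2}\left|\nabla u^l\right|^2$ add up, over $l$, to $\left|\nabla\bold{u}\right|^{p}$. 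Hence one may argue either component by component (using $\left|\nabla u^l\right|\le\left|\nabla\bold{u}\right|$, which is enough for the oscillation decay) or after summing over $l$ (which is enough for the gradient estimate), and in either case one is reduced to the scalar theory of \cite{D} and \cite{M} with no further analytic difficulty.
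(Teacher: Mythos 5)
Your proposal ends where the paper's own ``proof'' begins and ends: the paper does not prove this lemma at all, it simply quotes Theorem 1 of \cite{M} and Theorem 1.1 of Chap.~IX of \cite{D}, both of which are stated for $p$-Laplacian systems of exactly this Uhlenbeck (diagonal, common scalar weight) structure. Citing those results is therefore sufficient. The problem is the reduction you interpose before the citation: the claim that one can run the scalar DeGiorgi/intrinsic-scaling argument component by component because $\left|\nabla u^l\right|\leq\left|\nabla\mathbf{u}\right|$ does not survive inspection. Testing the $l$-th equation with $\pm\left(u^l-k\right)_{\pm}\zeta^p$ produces a Caccioppoli inequality whose right-hand side contains the term
\begin{equation*}
\int\left|\nabla\mathbf{u}\right|^{p-2}\left(u^l-k\right)_{\pm}^2\left|\nabla\zeta\right|^2\,dx\,dt .
\end{equation*}
In the scalar case this term is harmless because on the set $\left\{\left(u-k\right)_{\pm}>0\right\}$ one has $\nabla u=\nabla\left(u-k\right)_{\pm}$, so the weight equals $\left|\nabla\left(u-k\right)_{\pm}\right|^{p-2}$ and Young's inequality with exponents $\frac{p}{p-2}$ and $\frac{p}{2}$ absorbs it into the energy. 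In the system case the weight contains $\nabla u^m$ for $m\neq l$, which bears no relation to the truncated function; the inequality $\left|\nabla u^l\right|\leq\left|\nabla\mathbf{u}\right|$ helps only for the coercive term (since $p>2$) and goes the wrong way here, and summing over $l$ does not help either, because the truncation levels and the sets $\left\{\left(u^l-k_l\right)_{\pm}>0\right\}$ differ with $l$. So the componentwise estimate does not close, and this is precisely why \cite{D} and \cite{M} prove H\"older continuity for such systems by a different route: first local boundedness of $\left|\nabla\mathbf{u}\right|$, obtained from the parabolic inequality satisfied by $w=\left|\nabla\mathbf{u}\right|^2$ (the same device used in Claim 1 of the proof of Theorem \ref{thm-Main-boundedness-of-diffusion-coefficients-of-p-Laplacian}), which already gives local Lipschitz bounds on $\mathbf{u}$ and hence its H\"older continuity, and only then H\"older continuity of the gradient by an intrinsic-scaling perturbation argument at the gradient level.

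Your second step has the same character of difficulty: once $\left\|\nabla\mathbf{u}\right\|_{L^{\infty}}\leq L$, the differentiated system for $v=u^l_{x_j}$ is \emph{not} a uniformly parabolic problem with bounded coefficients plus a controlled forcing --- the coefficient matrix degenerates on $\left\{\nabla\mathbf{u}=0\right\}$, and the ``forcing'' you describe involves second derivatives of the other components which are not a priori in any controlled space. Handling exactly this degeneracy is the content of Theorem 1.1 of Chap.~IX of \cite{D}; it cannot be dispatched by a bare Moser estimate applied to $v$. In short: the correct move here is the one the paper makes, namely to invoke \cite{M} and \cite{D} as results about systems; the purported reduction to the scalar theory, if taken as the actual proof, would fail at the Caccioppoli step described above.
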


\section{Uniform Boundedness of the diffusion coefficinets $\left|\nabla \bold{u}\right|$}
\setcounter{equation}{0}
\setcounter{thm}{0}

Since the evolution of solutions are influenced mostly by the diffusion coefficients and external force as time goes by, it is very important to get the suitable regularity estimates for the diffusion coefficients. This section is devoted to provide the existence of solution $\bold{u}$ and a priori $L^{\infty}$ boundedness of diffusion coefficients $\left|\nabla\bold{u}\right|$. The proof of $L^{\infty}$ boundedness is based on a recurrence relation between a series of truncation of $\left|\nabla \bold{u}\right|$. An energy type inequality and an embedding play important roles on the proof.\\
\indent We start this section by stating well-known inequality.

\begin{lemma}[cf. Proposition 3.1 of Chap. I of \cite{D}]\label{Proposition-3-1-of-Chap-I-of-cite-D}
	Let $m_1$, $m_2\geq 1$ and let $0\leq t<T$. There exists a constant $C>0$ depending on $n$ and $p$ such that for every $v\in L^{\infty}\left(t,T;L^{m_1}\left(\R^n\right)\right)\cap L^{m_2}\left(t,T;W_0^{1,m_2}\left(\R^n\right)\right)$
	\begin{equation*}
		\int_{t}^{T}\int_{\R^n}\left|v\right|^{\frac{m_2\left(n+m_1\right)}{n}}\,dxdt\leq C\left(\int_{t}^{T}\int_{\R^n}\left|\nabla v\right|^{m_2}\,dxdt\right)\left(\sup_{t<\tau<T}\int_{\R^n}\left|v(\cdot,\tau)\right|^{m_1}\,dx\right)^{\frac{m_2}{n}}.
	\end{equation*}
\end{lemma}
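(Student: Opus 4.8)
The plan is to deduce this space-time interpolation inequality from the classical spatial Gagliardo--Nirenberg--Sobolev inequality applied on each time slice, followed by an elementary Hölder estimate in the time variable. Set
\begin{equation*}
	q=\frac{m_2(n+m_1)}{n},\qquad \theta=\frac{n}{n+m_1}\in(0,1),
\end{equation*}
so that $\theta q=m_2$ and $(1-\theta)q=\frac{m_1m_2}{n}$. A short computation shows that these exponents satisfy the scaling identity $\frac1q=\theta\left(\frac1{m_2}-\frac1n\right)+(1-\theta)\frac1{m_1}$, which is precisely the relation under which the Gagliardo--Nirenberg--Sobolev inequality is valid.

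First I would fix $\tau\in(t,T)$ and apply that inequality to the slice $v(\cdot,\tau)\in W^{1,m_2}_0(\R^n)$, obtaining a constant $C=C(n,m_1,m_2)$ with
\begin{equation*}
	\left(\int_{\R^n}|v(\cdot,\tau)|^{q}\,dx\right)^{1/q}\le C\left(\int_{\R^n}|\nabla v(\cdot,\tau)|^{m_2}\,dx\right)^{\theta/m_2}\left(\int_{\R^n}|v(\cdot,\tau)|^{m_1}\,dx\right)^{(1-\theta)/m_1}.
\end{equation*}
Raising this to the power $q$ and using $\theta q=m_2$ together with $(1-\theta)q=\frac{m_1m_2}{n}$ turns it into
\begin{equation*}
	\int_{\R^n}|v(\cdot,\tau)|^{q}\,dx\le C\left(\int_{\R^n}|\nabla v(\cdot,\tau)|^{m_2}\,dx\right)\left(\int_{\R^n}|v(\cdot,\tau)|^{m_1}\,dx\right)^{\frac{m_2}{n}}.
\end{equation*}
Then I would bound the last factor by $\left(\sup_{t<\tau<T}\int_{\R^n}|v(\cdot,\tau)|^{m_1}\,dx\right)^{m_2/n}$, pull it out of the time integral, and integrate the remaining inequality in $\tau$ over $(t,T)$; this produces exactly the stated estimate. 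The membership hypotheses on $v$ enter only to ensure that both factors on the right are finite and that the slicewise inequality is available for a.e.\ $\tau$.

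The single point that genuinely needs care is the validity of the spatial inequality in the range of exponents at hand. When $1\le m_2<n$ it follows by combining the Sobolev embedding $W^{1,m_2}(\R^n)\hookrightarrow L^{m_2^\ast}(\R^n)$, $m_2^\ast=\frac{nm_2}{n-m_2}$, with the $L^p$-interpolation inequality $\|v\|_{L^q}\le\|v\|_{L^{m_2^\ast}}^{\theta}\|v\|_{L^{m_1}}^{1-\theta}$; here $q$ lies strictly between $m_1$ and $m_2^\ast$ and $\theta$ is the corresponding interpolation exponent, since $\frac1q=\frac{\theta}{m_2^\ast}+\frac{1-\theta}{m_1}$ is a convex combination with weights in $(0,1)$. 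The mild obstacle is the regime $m_2\ge n$, where $m_2^\ast$ is no longer finite: there the inequality must be invoked directly from the general Gagliardo--Nirenberg inequality, the only excluded endpoint being $W^{1,n}\hookrightarrow L^\infty$, which never arises because $q=\frac{m_2(n+m_1)}{n}<\infty$. Since none of these steps requires real work, the lemma is quoted from \cite{D} rather than proved in full; the substance in the later sections lies in choosing $m_1$, $m_2$ and the functions to which it is applied.
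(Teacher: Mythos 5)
Your proof is correct and is essentially the standard argument behind the cited result: the paper does not prove this lemma but quotes it from DiBenedetto (Proposition 3.1, Chap.\ I of \cite{D}), whose proof is exactly your slicewise Gagliardo--Nirenberg inequality with $\theta=\frac{n}{n+m_1}$, followed by pulling out the $\sup$ in time and integrating over $(t,T)$. The exponent bookkeeping ($\theta q=m_2$, $(1-\theta)q=\frac{m_1m_2}{n}$, and the scaling identity) checks out, and your handling of the range $m_2\geq n$ via the general Gagliardo--Nirenberg inequality with $\theta<1$ is the right way to cover all $m_1,m_2\geq 1$.
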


%% Let $\widehat{j}\to\infty$ in \eqref{eq-subsolutionity-of-w-by-limit-of-w-widehat-j}.Then, by \eqref{eq-estimates-for-w-widehat-j-from-smooth-solution-1} and \eqref{eq-estimates-for-w-widehat-j-from-smooth-solution-2} we have
%% \begin{equation}\label{eq-subsolutionity-of-w}
%%	\frac{p}{2(p-1)}\int_{0}^{T}a^{ij}\left(w^{\,p-1}\right)_{x_i}\,\vp_{x_j}\,dxdt-\int_{0}^{T}w\,\vp_{t}\,dxdt\leq 0
%% \end{equation}
%% for the positive definite matrix $\left[a^{ij}\right]_{1\leq i,\,j\leq n}=\frac{2}{p}\left[\delta_{ij}+\left(p-2\right)\frac{\sum_{l=1}^ku_{x_i}u_{x_j}}{\overline{w}}\right]_{1\leq i,\,j\leq n}$ satisfying
%% \begin{equation*}
%% 	\frac{2}{p}\left|\xi\right|^2\leq a^{ij}\,\xi_i\xi_j\leq \frac{2(p-1)}{p}\left|\xi\right|^2 \qquad \forall \xi=\left(\xi_1,\cdots,\xi_n\right)\in\R^n.
%% \end{equation*}
We now are ready to prove the Theorem \ref{thm-Main-boundedness-of-diffusion-coefficients-of-p-Laplacian}.

\begin{proof}[\textbf{Proof of Theorem \ref{thm-Main-boundedness-of-diffusion-coefficients-of-p-Laplacian}}]
		For each $1\leq l\leq k$, let $\left\{u^l_{0,\,\widehat{j}}\right\}\subset C_0^{\infty}\left(\R^n\right)$ be a sequence of functions such that
	\begin{equation}\label{eq-initial-condition-of-u-l-widehat-j-01}
		\left|\nabla u^l_{0,\,\widehat{j}}\right|\leq \,\widehat{j}\,,\qquad \left\|u^l_{0,\,\widehat{j}}\right\|_{L^2\left(\R^n\right)}\leq 2\left\|u_{0}\right\|_{L^2\left(\R^n\right)},\qquad  \left\|u^l_{0,\,\widehat{j}}\right\|_{W^{1,p}\left(\R^n\right)}\leq 2\left\|u_{0}\right\|_{W^{1,p}\left(\R^n\right)}    \qquad \forall \,\widehat{j}\in\N 
	\end{equation}
	and
	\begin{equation*}\label{eq-convergience-between-smooth-soluton-u-widehat-j-and-u}
		u_{0,\,\widehat{j}}\to u_{0} \qquad \mbox{in $L^2\cap W^{1,p}$ as $\,\widehat{j}\to\infty$}.
	\end{equation*}
	By the standard theory for the uniformly parabolic equation \cite{LSU}, a smooth solution $\bold{u}_{\,\widehat{j}}=\left(u^1_{\,\widehat{j}},\cdots,u^k_{\,\widehat{j}}\right)$ of the system 
	\begin{equation}\label{eq-for-existence-non-degenerate-pde-with-bold-w-epsilon-M-diffusion-coefficients}
		\begin{cases}
			\begin{aligned}
				\left(u^l_{\,\widehat{j}}\right)_t&=\nabla\cdot\left(\left(\left|\nabla \bold{u}_{\,\widehat{j}}\right|^{p-2}+\frac{1}{\widehat{j}}\right)\nabla u^l_{\,\widehat{j}}\right) \qquad \mbox{in $\R^n\times\left(0,T\right]$}\\
				u^l_{\,\widehat{j}}\,(x,0)&=u^l_{0,\,\widehat{j}}\,(x) \qquad \qquad \qquad \qquad  \forall x\in \R^n.
			\end{aligned}
		\end{cases}
	\end{equation}
	exists on a short time interval. Let $(0,t_0)$ be the maximal interval of existence of smooth solution and suppose that $t_0<\infty$.\\
	
	\textbf{Claim 1:} $\left|\nabla\bold{u}_{\widehat{j}}\right|$ is $L^{\infty}$ bounded at $t=t_0$.\\
	We will use a modification of the proof of Theorem 1 of \cite{CV} to prove the claim. Let 
	\begin{equation}\label{eq-definition-overline-w-widehat-j}
		\overline{w}_{\,\widehat{j}}=\left|\nabla\bold{u}_{\,\widehat{j}}\right|^2=\sum_{l=1}^k\left|\nabla u_{\,\widehat{j}}^l\right|^2=\sum_{i=1}^n\sum_{l=1}^{k}\left[\left(u_{\,\widehat{j}}^l\right)_{x_i}\right]^2=\sum_{i=1}^n\sum_{l=1}^{k}\left[\left(u_{\,\widehat{j}}^l\right)_{x_i}\right]^2.
	\end{equation}
	Then, by simple computation we can get
	\begin{equation}\label{eq-for-bold-v-diffusion-coefficients-of-p-La-direct-computiation}
		\left(\overline{w}_{\,\widehat{j}}\right)_t=\left[\left(\delta_{ij}\overline{w}_{\,\widehat{j}}^{\frac{p-2}{2}}+\left(p-2\right)\overline{a}_{\,\widehat{j}}^{ij}\overline{w}_{\,\widehat{j}}^{\frac{p-4}{2}}\right)\left(\overline{w}_{\,\widehat{j}}\right)_{x_i}\right]_{x_j}+\frac{1}{\,\widehat{j}}\left(\La\overline{w}_{\,\widehat{j}}\,\,-2\sum_{l=1}^k\sum_{i=1}^{n}\left|\nabla \left(u_{\,\widehat{j}}^l\right)_{x_i}\right|^2\right)-\overline{F} \qquad \forall 1\leq i, \,j\leq n
	\end{equation}
	where
	\begin{equation*}
		\overline{a}_{\,\widehat{j}}^{ij}(x,t)=\sum_{l=1}^k\left(u^l_{\,\widehat{j}}\right)_{x_i}\left(u^l_{\,\widehat{j}}\right)_{x_j} \qquad\mbox{and} \qquad \overline{F}=2\sum_{l=1}^k\sum_{i=1}^{n}\overline{w}_{\,\widehat{j}}^{\frac{p-2}{2}}\left|\nabla \left(u_{\,\widehat{j}}^l\right)_{x_i}\right|^2+\frac{p-2}{2}\overline{w}_{\,\widehat{j}}^{\frac{p-4}{2}}\left|\nabla \overline{w}_{\,\widehat{j}}\right|^2.
	\end{equation*}
	The matrix
	\begin{equation*}
		\left(\begin{array}{c}\left(u^l_{\,\widehat{j}}\right)_{x_1}\\ \vdots \\\left(u^l_{\,\widehat{j}}\right)_{x_n}\end{array}\right)\left(\left(u^l_{\,\widehat{j}}\right)_{x_1},\cdots, \left(u^l_{\,\widehat{j}}\right)_{x_n}\right)=\left(\nabla u^l_{\,\widehat{j}}\right)^T\nabla u^l_{\,\widehat{j}} \qquad \forall 1\leq l\leq k,
	\end{equation*}
	is symmetric and positive semi-definite having exactly $n-1$ zero eigenvalues. Moreover, the remaining eigenvalue is 
	\begin{equation*}
		\lambda_{\,\widehat{j}}=\nabla u^l_{\,\widehat{j}}\left(\nabla u^l_{\,\widehat{j}}\right)^T=\left|\nabla u^l_{\,\widehat{j}}\right|^2 \qquad 1\leq l\leq k.
	\end{equation*}
	Thus the matrix $\left[\overline{a}_{\,\widehat{j}}^{ij}\right]_{1\leq i,\,j\leq n}$ is also a symmetric and positive semi-definite satisfying
	\begin{equation}\label{eq-property-of-matrix-overline-a-i-j}
		0\leq \overline{a}_{\,\widehat{j}}^{ij}\,\xi_i\xi_j\leq \overline{w}_{\,\widehat{j}}\left|\xi\right|^2 \qquad \forall \xi\in\R^n.
	\end{equation}
	By \eqref{eq-for-bold-v-diffusion-coefficients-of-p-La-direct-computiation} and \eqref{eq-property-of-matrix-overline-a-i-j}, $\overline{w}_{\,\widehat{j}}$ satisfies
	\begin{equation}\label{eq-for-bold-w-with-positive-definite-a-ij}
		\left(\overline{w}_{\,\widehat{j}}\right)_t=\left[a_{\,\widehat{j}}^{ij}\left(\overline{w}_{\,\widehat{j}}^{\frac{p}{2}}\right)_{x_i}\right]_{x_j}+\frac{1}{\,\widehat{j}}\left(\La\overline{w}_{\,\widehat{j}}\,\,-2\sum_{l=1}^k\sum_{i=1}^{n}\left|\nabla \left(u_{\,\widehat{j}}^l\right)_{x_i}\right|^2\right)-\overline{F} \qquad \forall (x,t)\in\R^n\times(0,\infty)
	\end{equation}
	for the positive definite matrix $\left[a_{\,\widehat{j}}^{ij}\right]_{1\leq i,\,j\leq n}=\frac{2}{p}\left[\delta_{ij}+\left(p-2\right)\frac{\sum_{l=1}^k\left(u_{\,\widehat{j}}^l\right)_{x_i}\left(u_{\,\widehat{j}}^l\right)_{x_j}}{\overline{w}_{\,\widehat{j}}}\right]_{1\leq i,\,j\leq n}$ satisfying
	\begin{equation}\label{eq-uniformly-ellitic-condition-of-diffusion-coefficient-a-i-j}
		\frac{2}{p}\left|\xi\right|^2\leq a_{\,\widehat{j}}^{ij}\,\xi_i\xi_j\leq \frac{2(p-1)}{p}\left|\xi\right|^2 \qquad \forall \xi=\left(\xi_1,\cdots,\xi_n\right)\in\R^n.
	\end{equation}
	Let $\overline{w}_{\,\widehat{j}}=w_{\,\widehat{j}}^2\,$. Then
	\begin{equation}\label{eq-degenerate-pde-of-w-absolute-value-of-diffusion-coefficient-of-p-Laplacian}
		\left(w_{\,\widehat{j}}\right)_t=\frac{p}{2(p-1)}\left[a_{\,\widehat{j}}^{ij}\left(w_{\,\widehat{j}}^{\,p-1}\right)_{x_i}\right]_{x_j}+\frac{1}{\,\widehat{j}}\,\,\La w_{\,\widehat{j}}+\frac{1}{\,\widehat{j}\,\,w_{\,\widehat{j}}}\left(\left|\nabla w_{\,\widehat{j}}\right|^2\,\,-\sum_{l=1}^k\sum_{i=1}^{n}\left|\nabla \left(u_{\,\widehat{j}}^l\right)_{x_i}\right|^2\right)+F
	\end{equation}
	where
	\begin{equation*}
		\begin{aligned}
			F=\frac{p}{2}w_{\,\widehat{j}}^{\,p-3}a_{\,\widehat{j}}^{ij}\left(w_{\,\widehat{j}}\right)_{x_i}\left(w_{\,\widehat{j}}\right)_{x_j}-\sum_{l=1}^k\sum_{i=1}^{n}w_{\,\widehat{j}}^{p-3}\left|\nabla \left(u_{\,\widehat{j}}^l\right)_{x_i}\right|^2-(p-2)w_{\,\widehat{j}}^{p-3}\left|\nabla w_{\,\widehat{j}}\right|^2.
		\end{aligned}
	\end{equation*}
	Observe that, by Cauchy-Schwarz inequality we have
	\begin{equation}\label{negativity-of-w-widehat-j-from-C-S-ineqiatly-01}
	\left|\nabla w_{\,\widehat{j}}\right|^2-\sum_{l=1}^k\sum_{i=1}^{n}\left|\nabla \left(u_{\,\widehat{j}}\right)^l_{x_i}\right|^2\leq 0.
	\end{equation}
	For $s\in\N$, let
	\begin{equation*}
		L_s=M\left(1-\frac{1}{2^{s}}\right) \qquad \mbox{and} \qquad \left(w_{\,\widehat{j}}\right)_s=\left(w_{\,\widehat{j}}-L_s\right)_+
	\end{equation*}
	for a constant $M>2$ which will be determined later. Then
	\begin{equation}\label{eq-lower-bound-of-U-on-U-j-positibve}
		w_{\,\widehat{j}}\geq \frac{M}{2}>1 \qquad \mbox{on $\left\{\left(w_{\,\widehat{j}}\right)_s\geq 0\right\}$} \qquad \forall j\in\N.
	\end{equation}
	Multiply \eqref{eq-degenerate-pde-of-w-absolute-value-of-diffusion-coefficient-of-p-Laplacian} by $\left(w_{\,\widehat{j}}\right)^{p-1}_s$ and integrate it over $\R^n$. Then, by \eqref{eq-uniformly-ellitic-condition-of-diffusion-coefficient-a-i-j} and \eqref{negativity-of-w-widehat-j-from-C-S-ineqiatly-01} we have the following energy inequality for $\left(w_{\,\widehat{j}}\right)_s$:
	\begin{equation}\label{eq-energy-type-inequality-for-boundedness-of-w-l}
	\begin{aligned}
	\frac{\partial}{\partial t}\left(\frac{1}{p}\int_{\R^n}\left(w_{\,\widehat{j}}\right)_s^{p}\,dx\right)+\frac{1}{p-1}\int_{\R^n}\left|\nabla \left(w_{\,\widehat{j}}\right)_s^{p-1}\right|^2\,dx\leq 0.
	\end{aligned}
	\end{equation}
	For fixed $0<t_1<t_0$, let $T_s=t_1\left(1-\frac{1}{2^{ps}}\right)$ and 
	\begin{equation*}
		A_s=\frac{1}{p}\sup_{T_s\leq t\leq t_0}\left(\int_{\R^n}\left(w_{\,\widehat{j}}\right)_s^{p}\,dx\right)+\frac{1}{p-1}\int_{T_s}^{t_0}\int_{\R^n}\left|\nabla \left(w_{\,\widehat{j}}\right)_s^{p-1}\right|^2\,dxdt.
	\end{equation*}
	Integrating \eqref{eq-energy-type-inequality-for-boundedness-of-w-l} over $\left(\tau,t\right)$ and $\left(\tau,t_0\right)$, $\left(T_{s-1}<\tau<T_s,\,\,T_s<t<t_0\right)$, we have
	\begin{equation}\label{eq-upper-bound-of-A-s-from-energy-ibnequaity}
		\begin{aligned}
			A_s\leq \int_{\R^n}\left(w_{\,\widehat{j}}\right)_s^{p}\left(x,\tau\right)\,dx.
		\end{aligned}
	\end{equation}
	Taking the mean value in $\tau$ on $\left[T_{s-1},T_s\right]$, we have
	\begin{equation}\label{eq-inequality-with-A-j-2}
		\begin{aligned}
			A_s\leq \frac{2^{ps}}{t_1}\int_{T_{s-1}}^{\infty}\int_{\R^n}\left(w_{\,\widehat{j}}\right)_s^{p}\,dxdt.
		\end{aligned}
	\end{equation}
	By Lemma \ref{Proposition-3-1-of-Chap-I-of-cite-D} with $v$, $m_1$ and $m_2$ being replaced by $\left(w_{\,\widehat{j}}\right)_s^{p-1}$, $\frac{p}{p-1}$ and $2$, respectively, we can get
	\begin{equation}\label{eq-Sobolev-and-Interpolation-inequalities-for-replacign-U-by-U-j}
		\begin{aligned}
			A_{s-1}\geq C_1\left(\int_{T_{s-1}}^{\infty}\int_{\R^n}\left(w_{\,\widehat{j}}\right)_{s-1}^{2\left(p-1+\frac{p}{n}\right)}\,dxdt\right)^{\frac{1}{2\left(p-1+\frac{p}{n}\right)}}
		\end{aligned}
	\end{equation}
	for some constant $C_1>0$. Since $\left(w_{\,\widehat{j}}\right)_{s-1}\geq \frac{M}{2^s}$ on $\left(w_{\,\widehat{j}}\right)_s\geq 0$, we can get
	\begin{equation}\label{eq-lower-bound-of-w-j-1-on-w-j-positive-1}
	\chi_{\left\{\left(w_{\,\widehat{j}}\right)_s\geq 0\right\}}\leq\left(\frac{2^s}{M}\left(w_{\,\widehat{j}}\right)_{s-1}\right)^{p-2+\frac{2p}{n}}.
	\end{equation}
	By \eqref{eq-inequality-with-A-j-2},  \eqref{eq-Sobolev-and-Interpolation-inequalities-for-replacign-U-by-U-j} and \eqref{eq-lower-bound-of-w-j-1-on-w-j-positive-1}, 
	\begin{equation}\label{eq-inequality-with-A-j-4}
		\begin{aligned}
			A_s\leq \frac{2^{2\left(p-1+\frac{p}{n}\right)s}}{t_1M^{p-2+\frac{2p}{n}}}\int_{T_{j-1}}^{\infty}\int_{\R^n}\left(w_{\,\widehat{j}}\right)_{s-1}^{2\left(p-1+\frac{p}{n}\right)}\,dxdt\leq \frac{C_2\,4^{\left(p-1+\frac{p}{n}\right)s}}{t_1M^{p-2+\frac{2p}{n}}}A_{s-1}^{1+\left(2p-3+\frac{2p}{n}\right)}.
		\end{aligned}
	\end{equation}
	for some constant $C_2>0$. Let $M=\frac{\textbf{M}}{t_1^{\frac{n}{n(p-2)+2p}}}$ and choose the constant $\textbf{M}>0$ so large that
	\begin{equation}\label{eq-condition-for-constant-bold-M-from-A-0}
		A_0\leq \left(\frac{\textbf{M}^{p-2+\frac{2p}{n}}}{C_2}\right)^{\frac{1}{2p-3+\frac{2p}{n}}}4^{-\left(p-1+\frac{p}{n}\right)\left(\frac{1}{2p-3+\frac{2p}{n}}\right)^2}.
	\end{equation}
	Then, by Lemma 4.1 of Chap. I of \cite{D} we have 
	\begin{equation*}
		A_s\to 0\qquad \mbox{ as $s\to\infty$}.
	\end{equation*}
	Therefore,
	\begin{equation}\label{eq-uniform-boundednes-of-gradient-of-solution-bold-u-widehat-j}
		\sup_{x\in\R^n,\,t_1\leq t\leq t_0}\left|\nabla\bold{u}_{\,\widehat{j}}\left(x,t\right)\right|\leq \frac{\textbf{M}}{t_1^{\frac{n}{n(p-2)+2p}}}
	\end{equation} 
	and the \textbf{claim 1} follows.\\
	
	\indent By the \textbf{claim 1}, the equation for $u^l_{\,\widehat{j}}$ is still uniformly parabolic at $t=t_0$. Thus, by the standard theory for the uniformly parabolic equation, we can extend the smoothness of $u^l_{\,\widehat{j}}$ to the time interval $\left(0,t_0+t_2\right)$ for some constant $t_2>0$. By the maximality of $t_0$, a contradiction arises. Hence $t_0=\infty$.\\
	\indent Let $T>0$. By simple computation, it can be easily checked that
	\begin{equation}\label{eq-estimates-of-L-2-of-u-i-and-H-1-of--sqrt-epsilon-u-i}
		\begin{aligned}
			&\frac{1}{2}\sup_{0<t<T}\int_{\R^n}\left(u_{\,\widehat{j}}^l(x,t)\right)^{2}\,dx+\int_{0}^{T}\int_{\R^n}\left|\nabla \bold{u}_{\,\widehat{j}}\right|^{p-2}\left|\nabla u^l_{\,\widehat{j}}\right|^2\,dxdt\\
			&\qquad \qquad \qquad \qquad \qquad \qquad+\frac{1}{j}\int_{0}^{T}\int_{\R^n}\left|\nabla u^l_{j}\right|^2\,dxdt \leq \int_{\R^n}\left(u_{0}^l\right)^{2}\,dx \qquad \forall 1\leq l\leq k.
		\end{aligned}
	\end{equation}
	By \eqref{eq-estimates-of-L-2-of-u-i-and-H-1-of--sqrt-epsilon-u-i}, $\left\{u^{l}_{\,\widehat{j}}\right\}$ is uniformly bounded in $L^2\left(\R^n\times\left(0,T\right)\right)$ and it has a subsequence which we may assume without loss of generality to be the sequence itself that  converges to some function $u^l$ weakly in $L^{2}\left(\R^n\times\left(0,T\right)\right)$ as $\widehat{j}\to\infty$. Moreover,
	\begin{equation}\label{eq-L-1-loc-convergence-of-u-i-and-bold-w}
		\begin{cases}
			\begin{array}{clll}
				\nabla u^l_{\,\widehat{j}}\to \nabla u^l &\qquad&&\\
				&\qquad &&\mbox{weakly in $L^{p}\left(\R^n\times\left(0,T\right)\right)$\qquad as $\widehat{j}\to\infty$}\\
				\left|\nabla \bold{u}_{\,\widehat{j}}\right|\to \left|\nabla \bold{u}\right|=\sqrt{\sum_{i=1}^{k}\left(\nabla u^l\right)^2}&\qquad&&
			\end{array}
		\end{cases}
	\end{equation}
	and
	\begin{equation}\label{eq-L-1-loc-convergence-of-u-i-and-bold-w-with-1-over-j}
		\frac{1}{\widehat{j}}\,\nabla u_{\,\widehat{j}}^l\to 0 \qquad \qquad \mbox{weakly in $L^{2}\left(\R^n\times\left(0,T\right)\right)$\qquad as $\widehat{j}\to\infty$}.
	\end{equation}
	Let $\vp\in H_0^1\left(0,T:L^2\left(\R^n\right)\right)\cap L^2\left(0,T:W^{1,p}_0\left(\R^n\right)\right)$ be a test function. Then, by the weak formulation of  \eqref{eq-for-existence-non-degenerate-pde-with-bold-w-epsilon-M-diffusion-coefficients} we have
	\begin{equation}\label{eq-for-weak-concept-of-solution-type-1}
		\begin{aligned}
			\int_{0}^{\infty}\int_{\R^n}\left|\nabla \bold{u}_{\,\widehat{j}}\right|^{\,p-2}\nabla u_{\,\widehat{j}}^l\cdot\nabla\vp\,dxdt+\int_{0}^{\infty}\int_{\R^n}\frac{1}{\widehat{j}}\nabla u_{\,\widehat{j}}^l\cdot\nabla\vp\,dxdt-\int_{0}^{\infty}\int_{\R^n}u^l_{\,\widehat{j}}\,\vp_t\,dxdt=0 \qquad \forall 1\leq l\leq k.
		\end{aligned}
	\end{equation}
	Letting $\widehat{j}\to \infty$ in \eqref{eq-for-weak-concept-of-solution-type-1}, by \eqref{eq-L-1-loc-convergence-of-u-i-and-bold-w} and \eqref{eq-L-1-loc-convergence-of-u-i-and-bold-w-with-1-over-j} we have
	\begin{equation*}\label{eq-for-weak-concept-of-solution-type-1-after-limits-0}
		\int_{0}^{\infty}\int_{\R^n}\left|\nabla \bold{u}\right|^{\,p-2}\nabla u^l\cdot\nabla\vp\,dxdt -\int_{0}^{\infty}\int_{\R^n}u^l\vp_t\,dxdt=0\qquad \forall 1\leq l\leq k,
	\end{equation*}
	and the weak formulation of \eqref{eq-main-equation-of-system} follows.\\
	\indent We now are going to show that 
	\begin{equation}\label{eq-converges-of-u-to-initial-data-as-t-to-zero}
		u^l\left(\cdot,t\right)\to u^l_0\qquad  \mbox{in $L^1$ as $t\to 0^+$},\,\,\forall 1\leq l\leq k.
	\end{equation} 
	Let $\eta(x)\in C_0^{2}\left(\R^n\right)$ and let $0<t<1$. Then, by \eqref{eq-estimates-of-L-2-of-u-i-and-H-1-of--sqrt-epsilon-u-i} we have
	\begin{equation}\label{compare-between-u-and-u-0-with-eta23579}
		\begin{aligned}
			&\left|\int_{\R^n}u^l_{\,\widehat{j}}(x,t)\eta(x)\,dx-\int_{\R^n}u^l_{0,\,\widehat{j}}(x)\eta(x)\,dx\right|\\
			&\qquad \qquad  \leq \int_{0}^{t}\int_{\R^n}\left|\nabla \bold{u}_{\,\widehat{j}}\right|^{\,p-2}\left|\nabla u^l_{\,\widehat{j}}(x,t)\right|\left|\nabla\eta(x)\right|\,dxdt+\frac{1}{\widehat{j}}\int_{0}^{t}\int_{\R^n}\left|\nabla u^l_{\,\widehat{j}}(x,t)\right|\left|\nabla\eta(x)\right|\,dxdt\\
			&\qquad \qquad \leq k^{\frac{p-2}{2}}\sum_{i=1}^{k}\int_{0}^{t}\int_{\R^n}\left|\nabla u^l_{\,\widehat{j}}(x,t)\right|^{p-1}\left|\nabla\eta(x)\right|\,dxdt+\frac{1}{\widehat{j}}\int_{0}^{t}\int_{\R^n}\left|\nabla u^l_{\,\widehat{j}}(x,t)\right|\left|\nabla\eta(x)\right|\,dxdt\\
			&\qquad \qquad \leq C\left(\left\|\bold{u}_0\right\|_{L^{2}\left(\R^n\right)},\left\|\nabla\eta\right\|_{L^{\infty}}\right)\left(t^{\frac{1}{p}}+\left(\frac{t}{\,\widehat{j}}\right)^{\frac{1}{2}}\right),\qquad \qquad \forall 0<t<1,\,\,1\leq l\leq k.
		\end{aligned}
	\end{equation}
	Thus, letting $\widehat{j}\to 0$ and then $t\to 0$ in \eqref{compare-between-u-and-u-0-with-eta23579}, \eqref{eq-converges-of-u-to-initial-data-as-t-to-zero} holds and $\bold{u}=\left(u^1,\cdots,u^k\right)$ is a weak solution of \eqref{eq-main-equation-of-system}.\\
\indent To complete the proof, we will show the $L^{\infty}$ boundedness of diffusion coefficients $\left|\nabla\bold{u}\right|$. By \eqref{eq-initial-condition-of-u-l-widehat-j-01}, \eqref{eq-upper-bound-of-A-s-from-energy-ibnequaity} and \eqref{eq-condition-for-constant-bold-M-from-A-0}, we can take the constant $\bold{M}$ in \eqref{eq-uniform-boundednes-of-gradient-of-solution-bold-u-widehat-j} to be only controlled by $\left\|\nabla \bold{u}_0\right\|_{L^{p}}$. Thus, letting $\widehat{j}\to\infty$ in \eqref{eq-uniform-boundednes-of-gradient-of-solution-bold-u-widehat-j} we have the inequality \eqref{eq-uniform-boundeness-of-grad-of-soluiton-bold-u} and the theorem follows.
\end{proof}

\begin{remark}
	With the weak differentiability of $\left|\nabla \bold{u}\right|^{\frac{p-2}{2}}u^l_{x_j}$, $\left(1\leq l\leq k,\,\,1\leq j\leq n\right)$, $L_{loc}^{p}$-boundedness of $\left|\nabla u\right|$ was studied by E. DiBenedetto. We refer the reader to the Chap. VIII of \cite{D} for the local boundedness of the gradient of solution of \eqref{eq-main-equation-of-system}.
\end{remark}

\section{Asymptotic Large Time Behaviour: The Entropy Approach}
\setcounter{equation}{0}
\setcounter{thm}{0}

In this section, we will investigate the convergence between the solution of \eqref{eq-main-equation-of-system} and the fundamental solution of $p$-Laplacian equation. We use a modification of the techniques used in \cite{A},  \cite{DD} and \cite{V} to show the convergence. For the constant $a_2$ given by \eqref{constants-a-1-a-2-for-scaling-and-mass-conservation}, let 
\begin{equation*}
R=R(t)=\left(\frac{t}{a_2}\right)^{a_2},\qquad \left(t>0\right).
\end{equation*}
For any $M>0$, let
\begin{equation*}
	\widetilde{\mathcal{B}}_M\left(\eta\right)=\left(C-\frac{(p-2)}{p}|\eta|^{\frac{p}{p-1}}\right)_+^{\frac{p-1}{p-2}}.
\end{equation*}
Then
\begin{equation}\label{relatio9n-between-Barenblatt-and-rescaled-Barenblatt-solutions}
	\mathcal{B}_M(x,t)=\left(\frac{t}{a_2}\right)^{-a_1}\widetilde{\mathcal{B}}_M\left(\left(\frac{t}{a_2}\right)^{-a_2}x\right)=\frac{1}{R^n}\widetilde{\mathcal{B}}_M\left(\eta\right)
\end{equation}
where the constant $C$ is uniquely determined by the constants $p$, $n$ and initial mass $M$.\\

\indent For a solution $\bold{u}=\left(u^1,\cdots,u^k\right)$ of \eqref{eq-main-equation-of-system}, let 
\begin{equation*}
	M_l=\int_{\R^n}u^l_0\,dx \qquad \mbox{and} \qquad \bold{M}=\left(M_1,\cdots,M_k\right).
\end{equation*}
For $\widehat{j}\in\N$, let $\bold{u}_{\,\widehat{j}}$ be the smooth solution of \eqref{eq-for-existence-non-degenerate-pde-with-bold-w-epsilon-M-diffusion-coefficients} satisfying 
\begin{equation}\label{eq-L-1-mass-of-u-widehat-j-01}
	M_l=\int_{\R^n}\left(u_{\,\widehat{j}}^l\right)_0\,dx, \qquad \forall 1\leq l\leq k.
\end{equation}
Consider the continuous rescaling
\begin{equation}\label{eq-continuous-rescaling-of=solution-u-i-and-bold-u}
	\theta^{\,l}\left(\eta,\tau\right)=R^{\,n}u^l\left(x,t\right), \qquad \qquad \mbox{\boldmath$\theta$}\left(\eta,\tau\right)=R^{\,n}\bold{u}\left(x,t\right)
\end{equation}
and
\begin{equation*}\label{eq-continuous-rescaling-of=smooth-solution-u-i-widehat-j-and-bold-u}
	\theta_{\,\widehat{j}}^{\,l}\left(\eta,\tau\right)=R^{\,n}u_{\,\widehat{j}}^l\left(x,t\right), \qquad \qquad \mbox{\boldmath$\theta$}_{\,\widehat{j}}\left(\eta,\tau\right)=R^{\,n}\bold{u}_{\,\widehat{j}}\left(x,t\right) \qquad\left(\,\eta=\frac{x}{R},\,\,\,\tau=\log R,\,\,\,1\leq l\leq k\,\right).
\end{equation*}
Then $\mbox{\boldmath$\theta$}_{\,\widehat{j}}=\left(\theta_{\,\widehat{j}}^{\,1},\cdots,\theta_{\,\widehat{j}}^{\,k}\right)$ satisfies
\begin{equation}\label{eq-equation-for-soution-boldmath-theta}
	\begin{aligned}
	\left(\theta_{\,\widehat{j}}^{\,l}\right)_{\tau}&=\nabla\cdot\left(\Theta_{\,\widehat{j}}^{p-2}\nabla\theta_{\,\widehat{j}}^{\,l}\right)+\nabla\theta_{\,\widehat{j}}^{\,l}\cdot\eta+n\theta_{\,\widehat{j}}^{\,l}+\frac{1}{\widehat{j}\,e^{\left(\frac{2a_2-1}{a_2}\right)\tau}}\,\La\theta_{\,\widehat{j}}^l\\
	&=\nabla\cdot\left(\Theta_{\,\widehat{j}}^{p-2}\nabla\theta_{\,\widehat{j}}^{\,l}\right)+\nabla\left(\eta\,\theta_{\,\widehat{j}}^{\,l}\right)+\frac{1}{\widehat{j}\,e^{\left(\frac{2a_2-1}{a_2}\right)\tau}}\,\La\theta_{\,\widehat{j}}^l \qquad \qquad \left(\Theta_{\,\widehat{j}}=\left|\nabla\mbox{\boldmath$\theta$}_{\,\widehat{j}}\right|,\,\,\,1\leq l\leq k\right)
	\end{aligned}
\end{equation}
and
\begin{equation*}
\begin{aligned}
\left(\left|\mbox{\boldmath$\theta$}_{\,\widehat{j}}\right|\right)_{\tau}&=\nabla\cdot\left(\Theta_{\,\widehat{j}}^{p-2}\nabla \left|\mbox{\boldmath$\theta$}_{\,\widehat{j}}\right|\right)+\nabla\cdot\left(\eta\left|\mbox{\boldmath$\theta$}_{\,\widehat{j}}\right|\right)+\frac{\Theta_{\,\widehat{j}}^{p-2}\left|\nabla \left|\mbox{\boldmath$\theta$}_{\,\widehat{j}}\right|\right|^2}{\left|\mbox{\boldmath$\theta$}_{\,\widehat{j}}\right|}-\frac{\Theta_{\,\widehat{j}}^p}{\left|\mbox{\boldmath$\theta$}_{\,\widehat{j}}\right|}\\
&\qquad \qquad \qquad  +\frac{1}{\widehat{j}\,e^{\left(\frac{2a_2-1}{a_2}\right)\tau}}\,\La\left|\mbox{\boldmath$\theta$}_{\,\widehat{j}}\right|+\frac{1}{\widehat{j}\,e^{\left(\frac{2a_2-1}{a_2}\right)\tau}}\left(\frac{\left|\nabla \left|\mbox{\boldmath$\theta$}_{\,\widehat{j}}\right|\right|^2}{\left|\mbox{\boldmath$\theta$}_{\,\widehat{j}}\right|}-\frac{\Theta_{\,\widehat{j}}^2}{\left|\mbox{\boldmath$\theta$}_{\,\widehat{j}}\right|}\right).
\end{aligned}
\end{equation*}
Note that, by Cauchy-Schwarz inequality 
\begin{equation*}
	\left|\nabla \left|\mbox{\boldmath$\theta$}_{\,\widehat{j}}\right|\right|\leq \Theta_{\,\widehat{j}}.
\end{equation*}

For a positive function $f$, we now consider the convex functionals
\begin{equation*}\label{eq-definition-of-convex-functional-H-theta-l-tau}
H_{f}\left(\tau\right)=\int_{\R^n}\left[\sigma\left(f\right)-\sigma\left(\widetilde{\mathcal{B}}_{\left|\bold{M}\right|}\right)-\sigma'\left(\widetilde{\mathcal{B}}_{\left|\bold{M}\right|}\right)\left(f-\widetilde{\mathcal{B}}_{\left|\bold{M}\right|}\right)\right]\,d\eta
\end{equation*}
and
\begin{equation*}\label{eq-definition-of-convex-functional-widehat-H-theta-l-tau-general}
	\widehat{H}_{f}\left(\tau\right)=\int_{\R^n}\left[\sigma\left(f\right)-\sigma\left(\widetilde{\mathcal{B}}_{\left|\bold{M}\right|}\right)+\frac{(p-1)}{p}\left|\eta\right|^{\frac{p}{p-1}}\left(f-\widetilde{\mathcal{B}}_{\left|\bold{M}\right|}\right)\right]\,d\eta
\end{equation*}
where $\sigma:\R^+\to\R$ is a function defined by
\begin{equation*}
\sigma(s)=\frac{(p-1)^2}{(2p-3)(p-2)}s^{\frac{2p-3}{p-1}}.
\end{equation*}
By the second-order Taylor expansion of $\sigma$ around $1$, we can have
\begin{equation}\label{eq-second-order-Taylor-expansion-for-sigma-around-1}
\begin{aligned}
&\sigma\left(\left|\mbox{\boldmath$\theta$}_{\,\widehat{j}}\right|\right)-\sigma\left(\widetilde{\mathcal{B}}_{\left|\bold{M}\right|}\right)-\sigma'\left(\widetilde{\mathcal{B}}_{\left|\bold{M}\right|}\right)\left(\left|\mbox{\boldmath$\theta$}_{\,\widehat{j}}\right|-\widetilde{\mathcal{B}}_{\left|\bold{M}\right|}\right)\\
&\qquad  =\left[\sigma\left(\frac{\left|\mbox{\boldmath$\theta$}_{\,\widehat{j}}\right|}{\widetilde{\mathcal{B}}_{\left|\bold{M}\right|}}\right)-\sigma(1)-\sigma'(1)\left(\frac{\left|\mbox{\boldmath$\theta$}_{\,\widehat{j}}\right|}{\widetilde{\mathcal{B}}_{\left|\bold{M}\right|}}-1\right)\right] \widetilde{\mathcal{B}}_{\left|\bold{M}\right|}^{\frac{2p-3}{p-1}} =\frac{1}{2}\widetilde{\mathcal{B}}_{\left|\bold{M}\right|}^{\,\,-\frac{1}{p-1}}\left|\left|\mbox{\boldmath$\theta$}_{\,\widehat{j}}\right|-\widetilde{\mathcal{B}}_{\left|\bold{M}\right|}\right|^2\sigma''\left(1+\beta\left(\frac{\left|\mbox{\boldmath$\theta$}_{\,\widehat{j}}\right|}{\widetilde{\mathcal{B}}_{\left|\bold{M}\right|}}-1\right)\right)
\end{aligned}
\end{equation}
for some $\beta\in(0,1)$. Thus,
\begin{align}
H_{\left|\mbox{\boldmath$\theta$}_{\,\widehat{j}}\right|}\left(\tau\right)&\geq \int_{\left\{\widetilde{\mathcal{B}}_{\left|\bold{M}\right|}\neq 0\right\}}\left[\sigma\left(\left|\mbox{\boldmath$\theta$}_{\,\widehat{j}}\right|\right)-\sigma\left(\widetilde{\mathcal{B}}_{\left|\bold{M}\right|}\right)-\sigma'\left(\widetilde{\mathcal{B}}_{\left|\bold{M}\right|}\right)\left(\left|\mbox{\boldmath$\theta$}_{\,\widehat{j}}\right|-\widetilde{\mathcal{B}}_{\left|\bold{M}\right|}\right)\right]\,d\eta\notag\\
&=\frac{1}{2}\int_{\left\{\widetilde{\mathcal{B}}_{\left|\bold{M}\right|}\neq 0\right\}}\widetilde{\mathcal{B}}_{\left|\bold{M}\right|}^{\,\,-\frac{1}{p-1}}\left|\left|\mbox{\boldmath$\theta$}_{\,\widehat{j}}\right|-\widetilde{\mathcal{B}}_{\left|\bold{M}\right|}\right|^2\left(1+\beta\left(\frac{\left|\mbox{\boldmath$\theta$}_{\,\widehat{j}}\right|}{\widetilde{\mathcal{B}}_{\left|\bold{M}\right|}}-1\right)\right)^{-\frac{1}{p-1}}\,d\eta\notag\\
&\geq \frac{1}{2}\int_{\left\{\left|\mbox{\boldmath$\theta$}_{\,\widehat{j}}\right|<\widetilde{\mathcal{B}}_{\left|\bold{M}\right|}\right\}}\widetilde{\mathcal{B}}_{\left|\bold{M}\right|}^{\,\,-\frac{1}{p-1}}\left|\left|\mbox{\boldmath$\theta$}_{\,\widehat{j}}\right|-\widetilde{\mathcal{B}}_{\left|\bold{M}\right|}\right|^2\,d\eta\label{eq-lower-bound-of-functional-H-sub-absolute-bold-theta-by-weighted-L-2-norm-of-defferences-between-bold-theta-and-fundamental-sol}\\
&\geq 0.\label{eq-positivity-of-functional-H-sub-absolute-bold-theta}
\end{align}
Since
\begin{equation}\label{computaion-of-sigma-dash-at-Barrenblatt-solution}
\sigma'\left(\widetilde{\mathcal{B}}_{\left|\bold{M}\right|}\right)=\left(\frac{p-1}{p-2}\right)\left(C_{\left|\bold{M}\right|}-\frac{(p-2)}{p}|\eta|^{\frac{p}{p-1}}\right)_+\qquad \mbox{and} \qquad 	\int_{\R^n}\left|\mbox{\boldmath$\theta$}_{\,\widehat{j}}\right|\left(\eta,\tau\right)\,d\eta=\int_{\R^n}\widetilde{\mathcal{B}}_{\left|\bold{M}\right|}\left(\eta\right)\,d\eta,
\end{equation}
we also have
\begin{align}
H_{\left|\mbox{\boldmath$\theta$}_{\,\widehat{j}}\right|}\left(\tau\right)&=\int_{\R^n}\left[\sigma\left(\left|\mbox{\boldmath$\theta$}_{\,\widehat{j}}\right|\right)-\sigma\left(\widetilde{\mathcal{B}}_{\left|\bold{M}\right|}\right)+\frac{(p-1)}{p}\left|\eta\right|^{\frac{p}{p-1}}\left(\left|\mbox{\boldmath$\theta$}_{\,\widehat{j}}\right|-\widetilde{\mathcal{B}}_{\left|\bold{M}\right|}\right)\right]\,d\eta\notag\\
&\qquad +\int_{\left\{\widetilde{\mathcal{B}}_{\left|\bold{M}\right|}=0\right\}}\left(\frac{p-1}{p-2}\right)\left(C_{\left|\bold{M}\right|}-\frac{(p-2)}{p}|\eta|^{\frac{p}{p-1}}\right)\left|\mbox{\boldmath$\theta$}_{\,\widehat{j}}\right|\,d\eta\notag\\
&\leq \int_{\R^n}\left[\sigma\left(\left|\mbox{\boldmath$\theta$}_{\,\widehat{j}}\right|\right)-\sigma\left(\widetilde{\mathcal{B}}_{\left|\bold{M}\right|}\right)+\frac{(p-1)}{p}\left|\eta\right|^{\frac{p}{p-1}}\left(\left|\mbox{\boldmath$\theta$}_{\,\widehat{j}}\right|-\widetilde{\mathcal{B}}_{\left|\bold{M}\right|}\right)\right]\,d\eta\notag\\
&=\widehat{H}_{\left|\mbox{\boldmath$\theta$}_{\,\widehat{j}}\right|}\left(\tau\right).\label{eq-definition-of-convex-functional-widehat-H-theta-l-tau}
\end{align}

We first get the following variation of entropy $\widehat{H}_{\left|\mbox{\boldmath$\theta$}\right|}$.
\begin{lemma}\label{eq-inequality-of-ODE-of-widehat-H-w-r-t-tau}
	Let $n\geq 2$ and $p>2$. Let $\bold{\theta}=\left(\theta^1,\cdots,\theta^k\right)$ be a solution of \eqref{eq-equation-for-soution-boldmath-theta}. If 
	\begin{equation}\label{eq-initial-condition-of-convergence-in-entropy-method}
 \int_{\R^n}\left(1+\left|\bold{u}_0\right|^{\frac{p-2}{p-1}}+\left|x\right|^{\frac{p}{p-1}}\right)\left|\bold{u}_0\right|\,dx<\infty,
	\end{equation}
	then
	\begin{equation}\label{decay-rate-of-widehat-H-w-r-t-time-tau}
    \widehat{H}_{\left|\mbox{\boldmath$\theta$}\right|}\left(\tau\right)\leq e^{-\tau}\cdot \widehat{H}_{\left|\mbox{\boldmath$\theta$}\right|}(0) \qquad \forall \tau>0.
	\end{equation}
\end{lemma}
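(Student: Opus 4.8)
The plan is to run the entropy--entropy-production scheme on the smooth approximations $\mbox{\boldmath$\theta$}_{\,\widehat{j}}$ of \eqref{eq-equation-for-soution-boldmath-theta} and to pass to the limit $\widehat{j}\to\infty$ only at the end; working with $\mbox{\boldmath$\theta$}_{\,\widehat{j}}$ first is what makes differentiation under the integral and integration by parts legitimate. The starting observation is that $\widehat{H}_v$ is exactly the confining free-energy gap, $\widehat{H}_v=\mathcal{F}[v]-\mathcal{F}[\widetilde{\mathcal{B}}_{\left|\bold{M}\right|}]$ with $\mathcal{F}[v]=\int_{\R^n}\bigl(\sigma(v)+\tfrac{p-1}{p}\left|\eta\right|^{\frac{p}{p-1}}v\bigr)\,d\eta$, whose minimizer at mass $\left|\bold{M}\right|$ is $\widetilde{\mathcal{B}}_{\left|\bold{M}\right|}$ (hence $\widetilde{\mathcal{B}}_{\left|\bold{M}\right|}$ is a stationary solution of the rescaled flow and $\widehat{H}_v\ge 0$). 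Writing $v=\left|\mbox{\boldmath$\theta$}_{\,\widehat{j}}\right|$ and $\psi(v)=\sigma'(v)+\tfrac{p-1}{p}\left|\eta\right|^{\frac{p}{p-1}}\ge 0$ (since $\sigma'(v)=\tfrac{p-1}{p-2}v^{\frac{p-2}{p-1}}>0$), the first step is to compute $\tfrac{d}{d\tau}\widehat{H}_v(\tau)=\int_{\R^n}\psi(v)\,v_\tau\,d\eta$. The equation for $\left|\mbox{\boldmath$\theta$}_{\,\widehat{j}}\right|$ differs from the clean rescaled $p$-Laplacian flow $v_\tau=\nabla\cdot\bigl(\Theta_{\,\widehat{j}}^{p-2}\nabla v\bigr)+\nabla\cdot(\eta v)$ only by the term $\bigl(\Theta_{\,\widehat{j}}^{p-2}\left|\nabla v\right|^2-\Theta_{\,\widehat{j}}^{p}\bigr)/v$ and its $\widehat{j}^{-1}$-analogue, both $\le 0$ by the Cauchy--Schwarz bound $\left|\nabla\left|\mbox{\boldmath$\theta$}_{\,\widehat{j}}\right|\right|\le\Theta_{\,\widehat{j}}$, together with the regularizing term $\tfrac{1}{\widehat{j}}e^{-\frac{2a_2-1}{a_2}\tau}\La v$, whose contribution to $\int\psi(v)v_\tau\,d\eta$ is a nonpositive part plus an error $\varepsilon_{\,\widehat{j}}(\tau)\ge 0$ of size $O(1/\widehat{j})$ uniformly on bounded $\tau$-intervals. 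Since $\psi(v)\ge 0$, multiplying by $\psi(v)$ and integrating gives $\tfrac{d}{d\tau}\widehat{H}_v(\tau)\le -D_{\,\widehat{j}}(\tau)+\varepsilon_{\,\widehat{j}}(\tau)$, where $D_{\,\widehat{j}}\ge 0$ is the dissipation of the clean flow.

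The second step rewrites $D_{\,\widehat{j}}$ as a relative quantity. Integrate by parts (the boundary terms vanish at $\partial\{\widetilde{\mathcal{B}}_{\left|\bold{M}\right|}>0\}$, because $\sigma'(\widetilde{\mathcal{B}}_{\left|\bold{M}\right|})$ and the Barenblatt pressure $z_{\mathcal{B}}:=\widetilde{\mathcal{B}}_{\left|\bold{M}\right|}^{\frac{p-2}{p-1}}=\bigl(C_{\left|\bold{M}\right|}-\tfrac{p-2}{p}\left|\eta\right|^{\frac{p}{p-1}}\bigr)_+$ vanish there, and at $\left|\eta\right|=\infty$ by the moment and gradient estimates); use $\left|\nabla z_{\mathcal{B}}\right|^{p-2}\nabla z_{\mathcal{B}}=-\bigl(\tfrac{p-2}{p-1}\bigr)^{p-1}\eta$ on $\{z_{\mathcal{B}}>0\}$, the differential form of the stationarity of $\widetilde{\mathcal{B}}_{\left|\bold{M}\right|}$; use the mass identity $\int_{\R^n}\left|\mbox{\boldmath$\theta$}_{\,\widehat{j}}\right|\,d\eta=\int_{\R^n}\widetilde{\mathcal{B}}_{\left|\bold{M}\right|}\,d\eta$ of \eqref{computaion-of-sigma-dash-at-Barrenblatt-solution}, which lets linear-in-$v$ pieces be recentered freely (compare the identity between the two functionals in \eqref{eq-definition-of-convex-functional-widehat-H-theta-l-tau}); and, finally, estimate $\Theta_{\,\widehat{j}}\ge\left|\nabla v\right|$ wherever this costs a nonpositive amount. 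One thereby obtains $D_{\,\widehat{j}}(\tau)\ge\mathcal{I}_{\,\widehat{j}}(\tau)$, where
\[
\mathcal{I}_{\,\widehat{j}}(\tau)=\int_{\R^n}\left|\mbox{\boldmath$\theta$}_{\,\widehat{j}}\right|\Bigl(\left|\nabla z_{\,\widehat{j}}\right|^{p-2}\nabla z_{\,\widehat{j}}-\left|\nabla z_{\mathcal{B}}\right|^{p-2}\nabla z_{\mathcal{B}}\Bigr)\cdot\nabla\bigl(z_{\,\widehat{j}}-z_{\mathcal{B}}\bigr)\,d\eta , \qquad z_{\,\widehat{j}}:=\left|\mbox{\boldmath$\theta$}_{\,\widehat{j}}\right|^{\frac{p-2}{p-1}},
\]
is the relative Fisher information of $\left|\mbox{\boldmath$\theta$}_{\,\widehat{j}}\right|$ against $\widetilde{\mathcal{B}}_{\left|\bold{M}\right|}$ (nonnegative, by monotonicity of $\xi\mapsto\left|\xi\right|^{p-2}\xi$, $p>2$). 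The exponents $a_1,a_2$ of \eqref{constants-a-1-a-2-for-scaling-and-mass-conservation} and the normalization of $\sigma$ are chosen precisely so that this bookkeeping closes with the constant $1$.

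The third step is the entropy--entropy-production inequality: with these normalizations, $\mathcal{I}_{\,\widehat{j}}(\tau)\ge\widehat{H}_{\left|\mbox{\boldmath$\theta$}_{\,\widehat{j}}\right|}(\tau)$, i.e.\ the relative Fisher information dominates the free-energy gap. This is the sharp Gagliardo--Nirenberg inequality for the $p$-Laplacian in the spirit of \cite{DD} (equivalently, a Bakry--\'Emery-type convexity estimate), and it uses that $\left|\mbox{\boldmath$\theta$}_{\,\widehat{j}}\right|$ and $\widetilde{\mathcal{B}}_{\left|\bold{M}\right|}$ carry the same mass. Combining the three steps, $\tfrac{d}{d\tau}\widehat{H}_{\left|\mbox{\boldmath$\theta$}_{\,\widehat{j}}\right|}(\tau)\le -\widehat{H}_{\left|\mbox{\boldmath$\theta$}_{\,\widehat{j}}\right|}(\tau)+\varepsilon_{\,\widehat{j}}(\tau)$, and Gr\"onwall yields $\widehat{H}_{\left|\mbox{\boldmath$\theta$}_{\,\widehat{j}}\right|}(\tau)\le e^{-\tau}\widehat{H}_{\left|\mbox{\boldmath$\theta$}_{\,\widehat{j}}\right|}(0)+o_{\,\widehat{j}}(1)$. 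Passing to the limit $\widehat{j}\to\infty$: the convergences established in the proof of Theorem \ref{thm-Main-boundedness-of-diffusion-coefficients-of-p-Laplacian}, transported through the rescaling \eqref{eq-continuous-rescaling-of=solution-u-i-and-bold-u}, give $\left|\mbox{\boldmath$\theta$}_{\,\widehat{j}}\right|\to\left|\mbox{\boldmath$\theta$}\right|$ a.e.\ and in $L^1_{\mathrm{loc}}$; since $v\mapsto\widehat{H}_v(\tau)$ is convex ($\sigma$ convex, the rest linear in $v$), hence weakly lower semicontinuous, the left-hand side passes to the limit with a $\liminf$, while $\widehat{H}_{\left|\mbox{\boldmath$\theta$}_{\,\widehat{j}}\right|}(0)\to\widehat{H}_{\left|\mbox{\boldmath$\theta$}\right|}(0)$ by dominated convergence, using the $\widehat{j}$-uniform bounds \eqref{eq-initial-condition-of-u-l-widehat-j-01}, \eqref{eq-L-1-mass-of-u-widehat-j-01} and the hypothesis \eqref{eq-initial-condition-of-convergence-in-entropy-method} --- which is exactly what guarantees $\widehat{H}_{\left|\mbox{\boldmath$\theta$}\right|}(0)<\infty$ (and, after propagation along the flow, $\widehat{H}_{\left|\mbox{\boldmath$\theta$}\right|}(\tau)<\infty$ for every $\tau$), since $\sigma(v)$ is of order $v^{\frac{2p-3}{p-1}}=v\cdot v^{\frac{p-2}{p-1}}$ and the weight term of order $\left|\eta\right|^{\frac{p}{p-1}}v$. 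This proves \eqref{decay-rate-of-widehat-H-w-r-t-time-tau}.

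The main obstacle is adapting the scalar entropy dissipation computation to the system norm $\left|\mbox{\boldmath$\theta$}_{\,\widehat{j}}\right|$: it is only a subsolution of a $p$-Laplacian flow, and its diffusion coefficient $\Theta_{\,\widehat{j}}^{p-2}=\left|\nabla\mbox{\boldmath$\theta$}_{\,\widehat{j}}\right|^{p-2}$ is larger than $\left|\nabla\left|\mbox{\boldmath$\theta$}_{\,\widehat{j}}\right|\right|^{p-2}$, so every occurrence of the gap between $\Theta_{\,\widehat{j}}$ and $\left|\nabla\left|\mbox{\boldmath$\theta$}_{\,\widehat{j}}\right|\right|$, of the extra term $\bigl(\Theta_{\,\widehat{j}}^{p-2}\left|\nabla v\right|^2-\Theta_{\,\widehat{j}}^{p}\bigr)/v$, and of the $\widehat{j}^{-1}$-regularization must be shown to enter with the \emph{favourable} sign, so that one still reaches the relative Fisher information with the \emph{sharp} constant (anything larger would destroy the clean rate $e^{-\tau}$). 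The accompanying technical points --- justifying differentiation under the integral sign and the vanishing of all boundary terms at $\partial\{\widetilde{\mathcal{B}}_{\left|\bold{M}\right|}>0\}$ and at infinity (where \eqref{eq-initial-condition-of-convergence-in-entropy-method} and the decay from \eqref{eq-uniform-boundeness-of-grad-of-soluiton-bold-u} enter), and the sharp Gagliardo--Nirenberg inequality of the third step (which may be quoted from \cite{DD}) --- are the remaining items requiring care.
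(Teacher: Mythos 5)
Your proposal is correct and follows essentially the same route as the paper: differentiate $\widehat{H}_{\left|\mbox{\boldmath$\theta$}_{\,\widehat{j}}\right|}$ along the regularized rescaled flow, use $\left|\nabla\left|\mbox{\boldmath$\theta$}_{\,\widehat{j}}\right|\right|\leq\Theta_{\,\widehat{j}}$ so the system-versus-scalar discrepancy terms enter with a favourable sign, absorb the cross term by Young's inequality, treat the $\widehat{j}^{-1}$-regularization as a vanishing error, quote the sharp entropy--entropy-production inequality from the scalar theory (the paper cites equation (36) of Theorem 2.2 of \cite{A}, i.e.\ $\widehat{H}\leq\frac{1}{p}I_1'+\frac{p-1}{p}I_2+I_3$, where you phrase the same input as the relative Fisher information bound in the spirit of \cite{DD}), then apply Gr\"onwall and pass to the limit $\widehat{j}\to\infty$ by lower semicontinuity. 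The only differences are presentational (your recasting of the dissipation as a relative Fisher information, and the error size, which the paper computes as $O(1/\sqrt{\widehat{j}\,})$ rather than $O(1/\widehat{j})$, both immaterial to the conclusion).
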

\begin{proof}
	We will use a modification of the proofs of Theorem 2.2 of \cite{A} and Proposition 1 of \cite{DD} to prove the lemma. Let
	 \begin{equation*}
		\widehat{H}_{\,\widehat{j}}\left(\tau\right)=\widehat{H}_{\left|\mbox{\boldmath$\theta$}_{\,\widehat{j}}\right|}\left(\tau\right).
		\end{equation*}
	By integration by parts and Young's inequality, we have	
	\begin{align}
	\frac{d\widehat{H}_{\,\widehat{j}}}{d\tau}&=\int_{\R^n}\left(\sigma'\left(\left|\mbox{\boldmath$\theta$}_{\,\widehat{j}}\right|\right)+\frac{(p-1)}{p}\left|\eta\right|^{\frac{p}{p-1}}\right)\left(\left|\mbox{\boldmath$\theta$}_{\,\widehat{j}}\right|\right)_{\tau}d\eta\notag\\
	&=-\int_{\R^n}\nabla\left(\sigma'\left(\left|\mbox{\boldmath$\theta$}_{\,\widehat{j}}\right|\right)+\frac{(p-1)}{p}\left|\eta\right|^{\frac{p}{p-1}}\right)\cdot\left(\Theta_{\,\widehat{j}}^{p-2}\nabla\left|\mbox{\boldmath$\theta$}_{\,\widehat{j}}\right|+\eta \left|\mbox{\boldmath$\theta$}_{\,\widehat{j}}\right|\right)d\eta\notag\\
	&\qquad -\int_{\R^n}\frac{\Theta_{\,\widehat{j}}^{p-2}}{\left|\mbox{\boldmath$\theta$}_{\,\widehat{j}}\right|}\left(\sigma'\left(\left|\mbox{\boldmath$\theta$}_{\,\widehat{j}}\right|\right)+\frac{(p-1)}{p}\left|\eta\right|^{\frac{p}{p-1}}\right)\left(\Theta_{\,\widehat{j}}^2-\left|\nabla\left|\mbox{\boldmath$\theta$}_{\,\widehat{j}}\right|\right|^2\right)\,d\eta\notag\\
	&\qquad\qquad -\frac{1}{\widehat{j}\,e^{\left(\frac{2a_2-1}{a_2}\right)\tau}}\int_{\R^n}\nabla\left(\sigma'\left(\left|\mbox{\boldmath$\theta$}_{\,\widehat{j}}\right|\right)+\frac{(p-1)}{p}\left|\eta\right|^{\frac{p}{p-1}}\right)\cdot\nabla\left|\mbox{\boldmath$\theta$}_{\,\widehat{j}}\right|d\eta\notag\\
	&\qquad \qquad \qquad  -\frac{1}{\widehat{j}\,e^{\left(\frac{2a_2-1}{a_2}\right)\tau}}\int_{\R^n}\frac{1}{\left|\mbox{\boldmath$\theta$}_{\,\widehat{j}}\right|}\left(\sigma'\left(\left|\mbox{\boldmath$\theta$}_{\,\widehat{j}}\right|\right)+\frac{(p-1)}{p}\left|\eta\right|^{\frac{p}{p-1}}\right)\left(\Theta_{\,\widehat{j}}^2-\left|\nabla\left|\mbox{\boldmath$\theta$}_{\,\widehat{j}}\right|\right|^2\right)\,d\eta\notag\\
	&\leq -\int_{\R^n}\left(\left|\mbox{\boldmath$\theta$}_{\,\widehat{j}}\right|^{-\frac{1}{p-1}}\nabla\left|\mbox{\boldmath$\theta$}_{\,\widehat{j}}\right|+\left|\eta\right|^{\frac{p}{p-1}-2}\eta\right)\cdot\left(\Theta_{\,\widehat{j}}^{p-2}\nabla\left|\mbox{\boldmath$\theta$}_{\,\widehat{j}}\right|+\eta \left|\mbox{\boldmath$\theta$}_{\,\widehat{j}}\right|\right)\,d\eta\notag\\
	&\qquad -\frac{p-1}{p-2}\int_{\R^n}\left|\mbox{\boldmath$\theta$}_{\,\widehat{j}}\right|^{-\frac{1}{p-1}}\Theta_{\,\widehat{j}}^{p-2}\left(\Theta_{\,\widehat{j}}^2-\left|\nabla\left|\mbox{\boldmath$\theta$}_{\,\widehat{j}}\right|\right|^2\right)\,d\eta  -\frac{p-1}{p}\int_{\R^n}\frac{\left|\eta\right|^{\frac{p}{p-1}}\Theta_{\,\widehat{j}}^{p-2}}{\left|\mbox{\boldmath$\theta$}_{\,\widehat{j}}\right|}\left(\Theta_{\,\widehat{j}}^2-\left|\nabla\left|\mbox{\boldmath$\theta$}_{\,\widehat{j}}\right|\right|^2\right)\,d\eta\notag\\
	&\qquad \qquad+\frac{1}{\widehat{j}\,e^{\left(2-\frac{1}{a_2}\right)\tau}} \int_{\R^n}\left|\eta\right|^{\frac{1}{p-1}}\left|\nabla\left|\mbox{\boldmath$\theta$}_{\,\widehat{j}}\right|\right|\,d\eta\notag\\
	&=-\left(I_1+I_2+I_3+I_4\right)\notag\\
	&\qquad  -\frac{1}{p-2}\int_{\R^n}\left|\mbox{\boldmath$\theta$}_{\,\widehat{j}}\right|^{-\frac{1}{p-1}}\Theta_{\,\widehat{j}}^{p-2}\left(\Theta_{\,\widehat{j}}^2-\left|\nabla\left|\mbox{\boldmath$\theta$}_{\,\widehat{j}}\right|\right|^2\right)\,d\eta-\frac{p-1}{p}\int_{\R^n}\frac{\left|\eta\right|^{\frac{p}{p-1}}\Theta_{\,\widehat{j}}^{p-2}}{\left|\mbox{\boldmath$\theta$}_{\,\widehat{j}}\right|}\left(\Theta_{\,\widehat{j}}^2-\left|\nabla\left|\mbox{\boldmath$\theta$}_{\,\widehat{j}}\right|\right|^2\right)\,d\eta \label{eqw-separate-of-time-differentiable-of-widehat-H-1}\\
	&\qquad \qquad+\frac{1}{\widehat{j}\,e^{\left(2-\frac{1}{a_2}\right)\tau}} \int_{\R^n}\left|\eta\right|^{\frac{1}{p-1}}\left|\nabla\left|\mbox{\boldmath$\theta$}_{\,\widehat{j}}\right|\right|\,d\eta\notag
	\end{align}
	where
	\begin{equation*}
	I_1=\int_{\R^n}\left|\mbox{\boldmath$\theta$}_{\,\widehat{j}}\right|^{-\frac{1}{p-1}}\Theta_{\,\widehat{j}}^{p}\,d\eta, \qquad I_2=\int_{\R^n}\left|\eta\right|^{\frac{p}{p-1}}\left|\mbox{\boldmath$\theta$}_{\,\widehat{j}}\right|\,d\eta
	\end{equation*}
	and
	\begin{equation*}
	I_3=\frac{p-1}{p-2}\int_{\R^n}\left|\mbox{\boldmath$\theta$}_{\,\widehat{j}}\right|\nabla\left|\mbox{\boldmath$\theta$}_{\,\widehat{j}}\right|^{\frac{p-2}{p-1}}\cdot\eta\,d\eta, \qquad I_4=\int_{\R^n}\left|\eta\right|^{\frac{-(p-2)}{p-1}}\Theta_{\,\widehat{j}}^{p-2}\nabla\left|\mbox{\boldmath$\theta$}_{\,\widehat{j}}\right|\cdot\eta\,d\eta.
	\end{equation*}
By H\"older inequality and condition \eqref{eq-initial-condition-of-convergence-in-entropy-method}, there exists a constant $C_1>0$ such that
\begin{equation}\label{eq-upper-bound-of-the-last-term-with-widehat-j=-and-tau}
	\begin{aligned}
	&\frac{1}{\widehat{j}\,e^{\left(2-\frac{1}{a_2}\right)\tau}} \int_{\R^n}\left|\eta\right|^{\frac{1}{p-1}}\left|\nabla\left|\mbox{\boldmath$\theta$}_{\,\widehat{j}}\right|\right|\,d\eta\\
	&\qquad \qquad \leq \frac{1}{\sqrt{\widehat{j}}\,\,e^{\left(2-\frac{1}{a_2}\right)\tau}} \left(\int_{\R^n}\left|\eta\right|^{\frac{p}{p-1}}\left|\mbox{\boldmath$\theta$}_{\,\widehat{j}}\right|\,d\eta\right)^{\frac{1}{p}}\left(\int_{\R^n}\left|\mbox{\boldmath$\theta$}_{\,\widehat{j}}\right|\,d\eta\right)^{\frac{p-2}{2p}}\left( \frac{4}{\widehat{j}}\int_{\R^n}\left|\nabla\left|\mbox{\boldmath$\theta$}_{\,\widehat{j}}\right|^{\frac{1}{2}}\right|^2\,d\eta\right)^{\frac{1}{2}}\\
	&\qquad \qquad \leq \frac{C_1}{\sqrt{\widehat{j}}\,\,e^{\left(2-\frac{1}{a_2}\right)\tau}}.
	\end{aligned}
\end{equation}
    By Young's inequality, we have
	\begin{equation}\label{eq-separation-of-I-4-into-I-1-and-I-2-by-Youngs-inequality}
	\begin{aligned}
	\left|I_4\right|\leq \frac{p-1}{p}\int_{\R^n}\left|\mbox{\boldmath$\theta$}_{\,\widehat{j}}\right|^{-\frac{1}{p-1}}\Theta_{\,\widehat{j}}^p\,d\eta+\frac{1}{p}\int_{\R^n}\left|\eta\right|^{\frac{p}{p-1}}\left|\mbox{\boldmath$\theta$}_{\,\widehat{j}}\right|\,d\eta\leq \frac{p-1}{p}I_1+\frac{1}{p}I_2.
	\end{aligned}
	\end{equation}
	By \eqref{eqw-separate-of-time-differentiable-of-widehat-H-1}, \eqref{eq-upper-bound-of-the-last-term-with-widehat-j=-and-tau}, \eqref{eq-separation-of-I-4-into-I-1-and-I-2-by-Youngs-inequality} and Cauchy-Schwarz inequality, we have
	\begin{equation}\label{inequality-time-deriviatives-of-H-controlled-by-I-1-I-2-and-I-3}
		\begin{aligned}
	\frac{d\widehat{H}_{\,\widehat{j}}}{d\tau}&\leq -\left(\frac{1}{p}I_1+\frac{p-1}{p}I_2+I_3\right)+\frac{C_1\,\,e^{\left(\frac{1}{a_2}-2\right)\tau}}{\sqrt{\widehat{j}}}\\
	&\leq -\left(\frac{1}{p}I'_1+\frac{p-1}{p}I_2+I_3\right)+\frac{C_1\,\,e^{\left(\frac{1}{a_2}-2\right)\tau}}{\sqrt{\widehat{j}}}
	\end{aligned}
	\end{equation}
	where
	\begin{equation*}
	I'_1=\int_{\R^n}\left|\mbox{\boldmath$\theta$}_{\,\widehat{j}}\right|^{-\frac{1}{p-1}}\left|\nabla \left|\mbox{\boldmath$\theta$}_{\,\widehat{j}}\right|\right|^{p}\,d\eta.
	\end{equation*}
	On the other hand, by the equation (36) in the proof of Theorem 2.2 of \cite{A}, the functional $\widehat{H}_{\,\widehat{j}}$ is also bounded from below by the last term of \eqref{inequality-time-deriviatives-of-H-controlled-by-I-1-I-2-and-I-3}, i.e., 
	\begin{equation}\label{inequality-of-inverse-situation-H-controlled-by-I-1-I-2-and-I-3}
	\widehat{H}_{\,\widehat{j}}\leq \frac{1}{p}I'_1+\frac{p-1}{p}I_2+I_3.
	\end{equation}
	By \eqref{inequality-time-deriviatives-of-H-controlled-by-I-1-I-2-and-I-3} and \eqref{inequality-of-inverse-situation-H-controlled-by-I-1-I-2-and-I-3},
	\begin{equation}\label{inequality-of-ODE-of-widehat-H-elimination-of-I-1-I-2-and-I-3}
	\left(\widehat{H}_{\,\widehat{j}}\right)_{\tau}\leq -\widehat{H}_{\,\widehat{j}}+\frac{C_1\,\,e^{\left(\frac{1}{a_2}-2\right)\tau}}{\sqrt{\widehat{j}}} \qquad \Rightarrow \qquad \left(e^{\tau}\widehat{H}_{\,\widehat{j}}-\frac{C_1}{\left(\frac{1}{a_2}-1\right)\sqrt{\widehat{j}}}\,\,e^{\left(\frac{1}{a_2}-1\right)\tau}\right)_{\tau}\leq 0 \qquad \forall \tau>0.
	\end{equation}
	Integrating \eqref{inequality-of-ODE-of-widehat-H-elimination-of-I-1-I-2-and-I-3} over $\left(0,\tau\right)$, we have 
	\begin{equation*}
		\widehat{H}_{\,\widehat{j}}\left(\tau\right)\leq e^{-\tau}\widehat{H}_{\,\widehat{j}}\left(0\right)+\frac{C_1}{\left(\frac{1}{a_2}-1\right)\sqrt{\widehat{j}}}\,\,\left(e^{\left(\frac{1}{a_2}-1\right)\tau}-1\right).
	\end{equation*}
This immediately implies 
\begin{equation*}
\widehat{H}_{\left|\mbox{\boldmath$\theta$}\right|}\left(\tau\right)	\leq \liminf_{\widehat{j}\to\infty}\widehat{H}_{\,\widehat{j}}\left(\tau\right)\leq \lim_{\widehat{j}\to\infty}\left[e^{-\tau}\widehat{H}_{\,\widehat{j}}\left(0\right)+\frac{C_1}{\left(\frac{1}{a_2}-1\right)\sqrt{\widehat{j}}}\,\,\left(e^{\left(\frac{1}{a_2}-1\right)\tau}-1\right)\right]=e^{-\tau}\cdot \widehat{H}_{\left|\mbox{\boldmath$\theta$}\right|}(0).
\end{equation*}
	Therefore,the inequality \eqref{decay-rate-of-widehat-H-w-r-t-time-tau} and the lemma follows.
\end{proof}

\indent Next, we investigate the similarity between the components of $\mbox{\boldmath$\theta$}$ at infinity.
\begin{lemma}\label{lem-relation-between-component-theta-l-and-absolute-of-bold-theta-at-infty}
	Suppose that the solution $\mbox{\boldmath$\theta$}=\left(\theta^1,\cdots,\theta^k\right)$ converges as $\tau\to\infty$. Then, under the hypotheses of Lemma \ref{eq-inequality-of-ODE-of-widehat-H-w-r-t-tau} we also have
	\begin{equation}\label{eq-relation-between-theta-l-and-absolute-of-bold-theta-at-infty-in-Lemma}
	\lim_{\tau\to\infty}\theta^l=\lim_{\tau\to\infty}\left(\frac{M_l}{{\left|\bold{M}\right|}}\left|\mbox{\boldmath$\theta$}\right|\right) \qquad \mbox{a.e. in $\R^n$}.
	\end{equation}
\end{lemma}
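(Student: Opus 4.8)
\emph{Sketch of proof.} The plan is to pit the pointwise Cauchy--Schwarz inequality between the vector $\mbox{\boldmath$\theta$}(\eta,\tau)$ and the \emph{constant} vector $\bold{M}=(M_1,\dots,M_k)$,
\[
\bold{M}\cdot\mbox{\boldmath$\theta$}=\sum_{l=1}^{k}M_l\,\theta^{\,l}\ \le\ |\bold{M}|\,\big|\mbox{\boldmath$\theta$}\big|,
\]
with equality at a point precisely when $\mbox{\boldmath$\theta$}$ is a non-negative multiple of $\bold{M}$ there --- i.e.\ $\theta^{\,l}=\tfrac{M_l}{|\bold{M}|}|\mbox{\boldmath$\theta$}|$ for all $l$ --- against the two mass identities already at hand: each component keeps its mass, $\int_{\R^n}\theta^{\,l}\,d\eta=M_l$, while the modulus has $\int_{\R^n}|\mbox{\boldmath$\theta$}|\,d\eta=|\bold{M}|$ in the limit. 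Integrating the non-negative defect $|\bold{M}|\,|\mbox{\boldmath$\theta$}|-\bold{M}\cdot\mbox{\boldmath$\theta$}$ and inserting these identities gives $|\bold{M}|\cdot|\bold{M}|-\sum_l M_l^{2}=0$, so the defect must vanish a.e., which is exactly \eqref{eq-relation-between-theta-l-and-absolute-of-bold-theta-at-infty-in-Lemma}.

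To carry this out, write $\mbox{\boldmath$\theta$}_\infty=\lim_{\tau\to\infty}\mbox{\boldmath$\theta$}(\cdot,\tau)$, which exists by assumption. First I would note that the $L^{1}$-mass conservation of Lemma~\ref{lem-Law-of-L-1-Mass-Conservation}, transported through the rescaling \eqref{eq-continuous-rescaling-of=solution-u-i-and-bold-u}, yields $\int_{\R^n}\theta^{\,l}(\cdot,\tau)\,d\eta=M_l$ for every $\tau$; since the weighted moment $\int_{\R^n}|\eta|^{p/(p-1)}|\mbox{\boldmath$\theta$}(\cdot,\tau)|\,d\eta$ is bounded uniformly in $\tau$ (it is a term of $\widehat H_{|\mbox{\boldmath$\theta$}|}$, which stays bounded by \eqref{decay-rate-of-widehat-H-w-r-t-time-tau}), the orbit $\{\theta^{\,l}(\cdot,\tau)\}_\tau$ is tight, so $\int_{\R^n}\theta^{\,l}_\infty\,d\eta=M_l$ and therefore $\int_{\R^n}\bold{M}\cdot\mbox{\boldmath$\theta$}_\infty\,d\eta=\sum_l M_l^{2}=|\bold{M}|^{2}$. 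Next I would show $\int_{\R^n}|\mbox{\boldmath$\theta$}_\infty|\,d\eta=|\bold{M}|$: Lemma~\ref{eq-inequality-of-ODE-of-widehat-H-w-r-t-tau} gives $\widehat H_{|\mbox{\boldmath$\theta$}|}(\tau)\to0$, hence $H_{|\mbox{\boldmath$\theta$}|}(\tau)\to0$ by \eqref{eq-positivity-of-functional-H-sub-absolute-bold-theta}--\eqref{eq-definition-of-convex-functional-widehat-H-theta-l-tau}; the weighted-$L^{2}$ lower bound \eqref{eq-lower-bound-of-functional-H-sub-absolute-bold-theta-by-weighted-L-2-norm-of-defferences-between-bold-theta-and-fundamental-sol}, combined with the mass identity $\int_{\R^n}|\mbox{\boldmath$\theta$}|\,d\eta=\int_{\R^n}\widetilde{\mathcal B}_{|\bold{M}|}\,d\eta$ from \eqref{computaion-of-sigma-dash-at-Barrenblatt-solution} and the boundedness below of $\widetilde{\mathcal B}_{|\bold{M}|}^{\,-1/(p-1)}$ on the compact support of $\widetilde{\mathcal B}_{|\bold{M}|}$, forces $|\mbox{\boldmath$\theta$}(\cdot,\tau)|\to\widetilde{\mathcal B}_{|\bold{M}|}$ in $L^{1}(\R^n)$, so $|\mbox{\boldmath$\theta$}_\infty|=\widetilde{\mathcal B}_{|\bold{M}|}$ and $\int_{\R^n}|\mbox{\boldmath$\theta$}_\infty|\,d\eta=|\bold{M}|$. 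Combining the two, $\int_{\R^n}\big(|\bold{M}|\,|\mbox{\boldmath$\theta$}_\infty|-\bold{M}\cdot\mbox{\boldmath$\theta$}_\infty\big)\,d\eta=0$ with a non-negative integrand, hence the integrand is $0$ a.e.\ and \eqref{eq-relation-between-theta-l-and-absolute-of-bold-theta-at-infty-in-Lemma} follows.

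The pointwise inequality and the closing zero-integral argument are routine; the work lies in the two passages to the limit, the harder being the second. There one must upgrade the entropy decay of Lemma~\ref{eq-inequality-of-ODE-of-widehat-H-w-r-t-tau} to an honest $L^{1}$ convergence $|\mbox{\boldmath$\theta$}|\to\widetilde{\mathcal B}_{|\bold{M}|}$, i.e.\ rule out any defect of mass in the limit: \eqref{eq-lower-bound-of-functional-H-sub-absolute-bold-theta-by-weighted-L-2-norm-of-defferences-between-bold-theta-and-fundamental-sol} only controls the region $\{|\mbox{\boldmath$\theta$}|<\widetilde{\mathcal B}_{|\bold{M}|}\}$, so one writes $\int_{\R^n}\big||\mbox{\boldmath$\theta$}|-\widetilde{\mathcal B}_{|\bold{M}|}\big|\,d\eta=2\int_{\R^n}\big(\widetilde{\mathcal B}_{|\bold{M}|}-|\mbox{\boldmath$\theta$}|\big)_+\,d\eta$ and uses the mass identity \eqref{computaion-of-sigma-dash-at-Barrenblatt-solution} to absorb the overshoot, while the tightness required for the first passage rides on the uniform bound for $\int_{\R^n}|\eta|^{p/(p-1)}|\mbox{\boldmath$\theta$}|\,d\eta$ supplied by $\widehat H_{|\mbox{\boldmath$\theta$}|}$. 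If one prefers not to rely on \eqref{computaion-of-sigma-dash-at-Barrenblatt-solution}, the same conclusion is reachable by integrating in $\tau$ the monotone quantity $\int_{\R^n}\big(|\mbox{\boldmath$\theta$}|-\tfrac{1}{|\bold{M}|}\bold{M}\cdot\mbox{\boldmath$\theta$}\big)\,d\eta$, whose $\tau$-derivative equals $-\int_{\R^n}\Theta^{p-2}\big(\Theta^{2}-\big|\nabla|\mbox{\boldmath$\theta$}|\big|^{2}\big)|\mbox{\boldmath$\theta$}|^{-1}\,d\eta\le0$; the resulting integrability furnishes $\tau_j\to\infty$ along which $|\nabla\mbox{\boldmath$\theta$}|$ and $\big|\nabla|\mbox{\boldmath$\theta$}|\big|$ agree in the limit, a rigidity which forces $\mbox{\boldmath$\theta$}_\infty$ to be a scalar profile times a fixed unit vector, necessarily $\bold{M}/|\bold{M}|$ by the component masses.
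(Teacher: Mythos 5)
Your main route has a genuine gap, and it sits exactly where all the work of the lemma lies. The pivotal identity $\int_{\R^n}\left|\boldsymbol{\theta}_\infty\right|d\eta=\left|\bold{M}\right|$ is not a harmless input: since $\int_{\R^n}\theta^{\,l}_\infty\,d\eta=M_l$, the equality case of the integrated Cauchy--Schwarz inequality shows that this identity holds \emph{if and only if} $\boldsymbol{\theta}_\infty$ is a.e.\ a nonnegative multiple of $\bold{M}$, i.e.\ if and only if \eqref{eq-relation-between-theta-l-and-absolute-of-bold-theta-at-infty-in-Lemma} holds. What comes for free is only the inequality $\int_{\R^n}\left|\boldsymbol{\theta}\right|(\cdot,\tau)\,d\eta=\int_{\R^n}\left|\bold{u}\right|(\cdot,t)\,dx\geq\bigl|\int_{\R^n}\bold{u}\,dx\bigr|=\left|\bold{M}\right|$, and with ``$\geq$'' your defect computation yields $\int_{\R^n}\bigl(\left|\bold{M}\right|\left|\boldsymbol{\theta}_\infty\right|-\bold{M}\cdot\boldsymbol{\theta}_\infty\bigr)\,d\eta\geq 0$, which gives nothing. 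Your justification of the exact equality leans on \eqref{computaion-of-sigma-dash-at-Barrenblatt-solution} (equivalently on the splitting \eqref{eq-integral-application-of-equal-mass-between-absolute-of-bold-theta-and-fundamental-solution}), i.e.\ on the claim that $\left|\boldsymbol{\theta}\right|(\cdot,\tau)$ and $\widetilde{\mathcal{B}}_{\left|\bold{M}\right|}$ carry the same mass at every $\tau$; but for a vector solution that equal-mass claim is itself the equality case of the same triangle inequality and therefore already encodes the pointwise colinearity of the components at time $\tau$ --- precisely what this lemma is supposed to establish in the limit. So the argument is circular at its crux: if the components are not already proportional, $\int\left|\boldsymbol{\theta}\right|d\eta$ exceeds $\left|\bold{M}\right|$ and your scheme collapses. (A smaller, fixable point: tightness alone does not let you pass $\int\theta^{\,l}(\cdot,\tau)\,d\eta=M_l$ to the limit; you also need uniform integrability, which you can extract from the $\int\sigma\left(\left|\boldsymbol{\theta}\right|\right)d\eta$ part of $\widehat{H}$, an $L^{\frac{2p-3}{p-1}}$ bound, together with Vitali's theorem.)

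The paper's own proof avoids this trap and is, in substance, the ``alternative'' you only sketch in your last sentences: integrating the entropy inequality in $\tau$ shows that the dissipation $\int_{\R^n}\bigl(\tfrac{1}{p-2}\left|\boldsymbol{\theta}\right|^{\frac{p-2}{p-1}}+\tfrac{p-1}{p}|\eta|^{\frac{p}{p-1}}\bigr)\frac{\Theta^{p-2}}{\left|\boldsymbol{\theta}\right|}\bigl(\Theta^2-\left|\nabla\left|\boldsymbol{\theta}\right|\right|^2\bigr)\,d\eta$, with $\Theta=\left|\nabla\boldsymbol{\theta}\right|$, is integrable against $e^{\tau}$, hence vanishes as $\tau\to\infty$; in the limit one gets $\left|\nabla\boldsymbol{\theta}_\infty\right|^2=\left|\nabla\left|\boldsymbol{\theta}_\infty\right|\right|^2$, and the equality case of Cauchy--Schwarz \emph{at the gradient level} forces the components to be proportional, the constants then being determined by the conserved component masses (giving the factor $M_l/\left|\bold{M}\right|$). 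Note that this route needs only $\widehat{H}\geq 0$ and the componentwise mass conservation, not the exact equal-mass identity. If you flesh out your final paragraph --- justify the passage to the limit in the dissipation, carry out the rigidity step, and pin down the constants from the masses --- you recover a complete proof; as written, your primary argument does not stand on its own.
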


\begin{proof}
	Let  $I_1$, $I_2$, $I_3$, $I_4$ and $I'_1$ be given in the proof of Lemma \ref{eq-inequality-of-ODE-of-widehat-H-w-r-t-tau} and let
		\begin{equation*}
	\lim_{\tau\to\infty}\theta^{\,l}=\widetilde{\theta}^{\,l} \qquad \mbox{and} \qquad \lim_{\tau\to\infty}\mbox{\boldmath$\theta$}=\widetilde{\mbox{\boldmath$\theta$}}.
	\end{equation*} 
	Let
	\begin{equation*}
	H_{\,\widehat{j}}\left(\tau\right)=H_{\left|\mbox{\boldmath$\theta$}_{\,\widehat{j}}\right|}\left(\tau\right) \qquad \mbox{and} \qquad \widehat{H}_{\,\widehat{j}}\left(\tau\right)=\widehat{H}_{\left|\mbox{\boldmath$\theta$}_{\,\widehat{j}}\right|}\left(\tau\right).
	\end{equation*}
	By \eqref{eq-positivity-of-functional-H-sub-absolute-bold-theta} and \eqref{eq-definition-of-convex-functional-widehat-H-theta-l-tau} we have
	\begin{equation}\label{negative-lower-bound-of-functional-widehat-H}
	\widehat{H}_{\,\widehat{j}}(\tau)\geq H_{\,\widehat{j}}\left(\tau\right)\geq 0.
	\end{equation}
	By \eqref{eqw-separate-of-time-differentiable-of-widehat-H-1}, \eqref{eq-upper-bound-of-the-last-term-with-widehat-j=-and-tau}, \eqref{eq-separation-of-I-4-into-I-1-and-I-2-by-Youngs-inequality} and \eqref{inequality-of-inverse-situation-H-controlled-by-I-1-I-2-and-I-3}, we can get
	\begin{equation*}
		e^{\tau}\int_{\R^n}\left(\frac{1}{p-2}\left|\mbox{\boldmath$\theta$}_{\,\widehat{j}}\right|^{\frac{p-2}{p-1}}+\frac{p-1}{p}|\eta|^{\frac{p}{p-1}}\right)\frac{\Theta_{\,\widehat{j}}^{p-2}}{\left|\mbox{\boldmath$\theta$}_{\,\widehat{j}}\right|}\left(\Theta_{\,\widehat{j}}^2-\left|\nabla\left|\mbox{\boldmath$\theta$}_{\,\widehat{j}}\right|\right|^2\right)\,d\eta\leq \left(-e^{\tau}\widehat{H}_{\,\widehat{j}}+\frac{C_1}{\left(\frac{1}{a_2}-1\right)\sqrt{\widehat{j}}}\,\,e^{\left(\frac{1}{a_2}-1\right)\tau}\right)_{\tau}.
    \end{equation*}	
	By \eqref{negative-lower-bound-of-functional-widehat-H}, integrating over $\tau>0$ gives
	\begin{equation*}
		\begin{aligned}
			0\leq&\int_{\frac{\tau}{2}}^{\tau}e^{s}\int_{\R^n}\left(\frac{1}{p-2}\left|\mbox{\boldmath$\theta$}_{\,\widehat{j}}\right|^{\frac{p-2}{p-1}}+\frac{p-1}{p}|\eta|^{\frac{p}{p-1}}\right)\frac{\Theta_{\,\widehat{j}}^{p-2}}{\left|\mbox{\boldmath$\theta$}_{\,\widehat{j}}\right|}\left(\Theta_{\,\widehat{j}}^2-\left|\nabla\left|\mbox{\boldmath$\theta$}_{\,\widehat{j}}\right|\right|^2\right)\,d\eta ds\\
		&\qquad \qquad \leq \int_{0}^{\tau}e^{s}\int_{\R^n}\left(\frac{1}{p-2}\left|\mbox{\boldmath$\theta$}_{\,\widehat{j}}\right|^{\frac{p-2}{p-1}}+\frac{p-1}{p}|\eta|^{\frac{p}{p-1}}\right)\frac{\Theta_{\,\widehat{j}}^{p-2}}{\left|\mbox{\boldmath$\theta$}_{\,\widehat{j}}\right|}\left(\Theta_{\,\widehat{j}}^2-\left|\nabla\left|\mbox{\boldmath$\theta$}_{\,\widehat{j}}\right|\right|^2\right)\,d\eta ds\\
		&\qquad \qquad \leq -e^{\tau}\widehat{H}_{\,\widehat{j}}\left(\tau\right)+\frac{C_1}{\left(\frac{1}{a_2}-1\right)\sqrt{\widehat{j}}}\,\,e^{\left(\frac{1}{a_2}-1\right)\tau}+\widehat{H}_{\,\widehat{j}}\left(0\right)-\frac{C_1}{\left(\frac{1}{a_2}-1\right)\sqrt{\widehat{j}}}\\
		&\qquad \qquad \leq \frac{C_1}{\left(\frac{1}{a_2}-1\right)\sqrt{\widehat{j}}}\,\,e^{\left(\frac{1}{a_2}-1\right)\tau}+\widehat{H}_{\,\widehat{j}}\left(0\right).
		\end{aligned}
	\end{equation*}	
	This immediately implies that
	\begin{equation*}
		\begin{aligned}
			0\leq&\int_{\frac{\tau}{2}}^{\tau}e^{s}\int_{\R^n}\left(\frac{1}{p-2}\left|\mbox{\boldmath$\theta$}\right|^{\frac{p-2}{p-1}}+\frac{p-1}{p}|\eta|^{\frac{p}{p-1}}\right)\frac{\Theta^{p-2}}{\left|\mbox{\boldmath$\theta$}\right|}\left(\Theta^2-\left|\nabla\left|\mbox{\boldmath$\theta$}\right|\right|^2\right)\,d\eta ds\\
			&\qquad \leq \liminf_{\widehat{j}\to\infty}\left[\int_{\frac{\tau}{2}}^{\tau}e^{s}\int_{\R^n}\left(\frac{1}{p-2}\left|\mbox{\boldmath$\theta$}_{\,\widehat{j}}\right|^{\frac{p-2}{p-1}}+\frac{p-1}{p}|\eta|^{\frac{p}{p-1}}\right)\frac{\Theta_{\,\widehat{j}}^{p-2}}{\left|\mbox{\boldmath$\theta$}_{\,\widehat{j}}\right|}\left(\Theta_{\,\widehat{j}}^2-\left|\nabla\left|\mbox{\boldmath$\theta$}_{\,\widehat{j}}\right|\right|^2\right)\,d\eta ds\right]\\
			&\qquad \qquad \leq \lim_{\widehat{j}\to\infty}\left[\frac{C_1}{\left(\frac{1}{a_2}-1\right)\sqrt{\widehat{j}}}\,\,e^{\left(\frac{1}{a_2}-1\right)\tau}+\widehat{H}_{\,\widehat{j}}\left(0\right)\right]\\
			&\qquad \qquad =\widehat{H}_{\left|\mbox{\boldmath$\theta$}\right|}(0).
		\end{aligned}
	\end{equation*}	
	Taking the mean value in $s$ in $\left[\frac{\tau}{2},\tau\right]$, we have
	\begin{equation}\label{inequalities-of-functional-widehat-H-which-showes-negativity-of-time-derivative-of-widehat-H}
		0\leq \int_{\R^n}\left(\frac{1}{p-2}\left|\mbox{\boldmath$\theta$}\right|^{\frac{p-2}{p-1}}+\frac{p-1}{p}|\eta|^{\frac{p}{p-1}}\right)\frac{\Theta^{p-2}}{\left|\mbox{\boldmath$\theta$}\right|}\left(\Theta^2-\left|\nabla\left|\mbox{\boldmath$\theta$}\right|\right|^2\right)\,d\eta\left(\tau'\right)\leq \frac{\tau \widehat{H}_{\left|\mbox{\boldmath$\theta$}\right|}(0)}{2e^{\frac{\tau}{2}}} \qquad \left(\frac{\tau}{2}<\tau'<\tau\right).
	\end{equation}	
	Letting $\tau\to\infty$ in \eqref{inequalities-of-functional-widehat-H-which-showes-negativity-of-time-derivative-of-widehat-H}, we can get
	\begin{equation}\label{eq-equal-of-two-quantity-Theta-and-nabla-absolute-of-bold-theta}
\left|\nabla\,\widetilde{\mbox{\boldmath$\theta$}}\,\right|^2=\left|\nabla\left|\widetilde{\mbox{\boldmath$\theta$}}\right|\,\right|^2.
	\end{equation}
	By \eqref{eq-equal-of-two-quantity-Theta-and-nabla-absolute-of-bold-theta} and the condition of equality in the Cauchy-Schwarz inequality, there exist constants $\bold{c}>0$ and $c^{ij}>0$, $\left(1\leq i,\,j\leq k\right)$, such that
	\begin{equation*}
	\nabla \widetilde{\theta}^{\,i}=c^{ij}\nabla\widetilde{\theta}^{\,j} \qquad \forall 1\leq i,\,j\leq k \qquad \mbox{and} \qquad \widetilde{\theta}^l=\bold{c}\left|\nabla \widetilde{\theta}^l\right| \qquad \forall  1\leq l\leq k.
	\end{equation*}
	This immediately implies that
	\begin{equation}\label{eq-relation-between-theta-l-and-absolute-of-bold-theta-at-infty-in-proof}
	\left|\widetilde{\mbox{\boldmath$\theta$}}\right|^2=\sum_{j=1}^k\left(\,\widetilde{\theta}^{\,j}\,\right)^2=\bold{c}^2\sum_{j=1}^k\left|\nabla\widetilde{\theta}^{\,j}\right|^2=\bold{c}^2\left|\nabla\widetilde{\theta}^{\,i}\right|^2\sum_{j=1}^k\left(c^{ij}\right)^2=\left(\,\widetilde{\theta}^{\,i}\,\right)^2\sum_{j=1}^k\left(c^{ij}\right)^2:=C^l\left(\,\widetilde{\theta}^i\,\right)^2.
	\end{equation}
	By \eqref{eq-relation-between-theta-l-and-absolute-of-bold-theta-at-infty-in-proof} and the $L^1$ mass conservation of $\theta^{\,l}$, one can easily check that the constant $C^l=\frac{{\left|\bold{M}\right|}}{M_l}$. Therefore the equation \eqref{eq-relation-between-theta-l-and-absolute-of-bold-theta-at-infty-in-Lemma} holds and the lemma follows.
\end{proof}

We now ready to show the asymptotic large time behaviour of solution $\bold{u}=\left(u^1,\cdots,u^k\right)$.

\begin{proof}[\textbf{Proof of Theorem \ref{thm-convergence-between-absolute-bold-theta-and-fundamental-function-with-decay-rate}}]
	 By Lemma \ref{lem-relation-between-component-theta-l-and-absolute-of-bold-theta-at-infty}, it suffices to show that 
	\begin{equation}\label{1st-conclusion-of-convergence-between-absolute-bold-theta-and-fundamental-function-with-decay-rate}
	\left\|\,\left|\bold{u}\right|(\cdot,t)-\mathcal{B}_{\left|\bold{M}\right|}(\cdot,t)\right\|_{L^1\left(\R^n\right)}\leq C\left(\frac{t}{a_2}\right)^{-\frac{a_2}{2}}
	\end{equation}
	for some constant $C>0$. Let  $\mbox{\boldmath$\theta$}=\left(\theta^{\,1},\cdots,\theta^{\,k}\right)$ be a solution of \eqref{eq-equation-for-soution-boldmath-theta} given by \eqref{eq-continuous-rescaling-of=solution-u-i-and-bold-u}. Since $\left|\mbox{\boldmath$\theta$}\right|$ and $\widetilde{\mathcal{B}}_M$ have equal mass, 
	\begin{equation}\label{eq-integral-application-of-equal-mass-between-absolute-of-bold-theta-and-fundamental-solution}
	\frac{1}{2}\int_{\R^n}\left|\left|\mbox{\boldmath$\theta$}\right|-\widetilde{\mathcal{B}}_{\left|\bold{M}\right|}\right|\,d\eta=\int_{\left\{\left|\mbox{\boldmath$\theta$}\right|<\widetilde{\mathcal{B}}_M\right\}}\left|\left|\mbox{\boldmath$\theta$}\right|-\widetilde{\mathcal{B}}_{\left|\bold{M}\right|}\right|\,d\eta.
	\end{equation}
	By \eqref{eq-lower-bound-of-functional-H-sub-absolute-bold-theta-by-weighted-L-2-norm-of-defferences-between-bold-theta-and-fundamental-sol}, \eqref{eq-definition-of-convex-functional-widehat-H-theta-l-tau}, \eqref{decay-rate-of-widehat-H-w-r-t-time-tau},  \eqref{eq-integral-application-of-equal-mass-between-absolute-of-bold-theta-and-fundamental-solution} and H\"older inequality,
	\begin{equation}\label{eq-L-1-difference-between-bold-theta-and-widehat-fundamental-solution-in-scaling}
	\begin{aligned}
\int_{\R^n}\left|\left|\mbox{\boldmath$\theta$}\right|-\widetilde{\mathcal{B}}_{\left|\bold{M}\right|}\right|\,d\eta&\leq 2\left(\int_{\left\{\left|\mbox{\boldmath$\theta$}\right|<\widetilde{\mathcal{B}}_{\left|\bold{M}\right|}\right\}}\widetilde{\mathcal{B}}_{\left|\bold{M}\right|}^{\,\,-\frac{1}{p-1}}\left|\left|\mbox{\boldmath$\theta$}\right|-\widetilde{\mathcal{B}}_{\left|\bold{M}\right|}\right|^2\,d\eta\right)^{\frac{1}{2}}\left(\int_{\R^n}\widetilde{\mathcal{B}}_{\left|\bold{M}\right|}^{\,\,\frac{1}{p-1}}\,d\eta\right)^{\frac{1}{2}}\\
	&\leq 2\left(2H_{\left|\mbox{\boldmath$\theta$}\right|}\left(\tau\right)\right)^{\frac{1}{2}}\left(\int_{\R^n}\widetilde{\mathcal{B}}_{\left|\bold{M}\right|}^{\,\,\frac{1}{p-1}}\,d\eta\right)^{\frac{1}{2}}\\
	&\leq 2\left(2e^{-\tau}\widehat{H}_{\left|\mbox{\boldmath$\theta$}\right|}(0)\right)^{\frac{1}{2}}\left(\int_{\R^n}\widetilde{\mathcal{B}}_{\left|\bold{M}\right|}^{\,\,\frac{1}{p-1}}\,d\eta\right)^{\frac{1}{2}}=Ce^{-\frac{\tau}{2}}
	\end{aligned}
	\end{equation}
	where
	\begin{equation*}
	C=2\sqrt{2\widehat{H}_{\left|\mbox{\boldmath$\theta$}\right|}(0)\int_{\R^n}\widetilde{\mathcal{B}}_{\left|\bold{M}\right|}^{\,\,\frac{1}{p-1}}\,d\eta}.
	\end{equation*}
	Therefore, by \eqref{eq-continuous-rescaling-of=solution-u-i-and-bold-u}, \eqref{relatio9n-between-Barenblatt-and-rescaled-Barenblatt-solutions} and \eqref{eq-L-1-difference-between-bold-theta-and-widehat-fundamental-solution-in-scaling} we have
	\begin{equation*}
	\begin{aligned}
	\int_{\R^n}\left|\left|\bold{u}\right|(x,t)-\mathcal{B}_{\left|\bold{M}\right|}\left(x,t\right)\right|\,dx&=\int_{\R^n}\left|\left|\bold{u}\right|(x,t)-\left(\frac{t}{a_2}\right)^{-a_1}\widetilde{\mathcal{B}}_{\left|\bold{M}\right|}\left(\left(\frac{t}{a_2}\right)^{-a_2}x\right)\right|\,dx\\
	&=\int_{\R^n}\left|\left|\mbox{\boldmath$\theta$}\right|(x,t)-\widetilde{\mathcal{B}}_{\left|\bold{M}\right|}\right|\,d\eta\leq C\left(\frac{t}{a_2}\right)^{-\frac{a_2}{2}}.
	\end{aligned}
	\end{equation*}
	Henec the claim holds and the theorem follows.
\end{proof}

\begin{remark}
	By H\"older continuity of component $u^l$, $\left(i=1,\cdots,k\right)$,
	\begin{equation*}
	\theta^{\,i}(\eta,\tau) \to \widetilde{\mathcal{B}}_{\left|\bold{M}\right|}(\eta) \qquad \mbox{on every compact subset of $\R^n$ $\quad$as $\tau\to\infty$}.
	\end{equation*}
   This immediately implies the uniform convergence between $u^{\,i}$ and $\mathcal{B}_M$ in $L_{loc}^{\infty}$, i.e.,
	\begin{equation}\label{eq-L-infty-convergence-between-u-i-and-M-i-barrenblatt-profiles}
	\lim_{t\to\infty}t^{\,a_1}\left|u^l(\cdot,t)-\frac{M_i}{\left|\bold{M}\right|}\mathcal{B}_{\left|\bold{M}\right|}\left(\cdot,t\right)\right|=0\qquad \mbox{uniformly on every compact subset of $\R^n$}.
	\end{equation}
\end{remark}

\section{Harnack Type Inequality of Degenerated $p$-Laplacian System}
\setcounter{equation}{0}
\setcounter{thm}{0}

The last section is devoted to find a suitable Harnack type inequality for the component $u^l$, $\left(1\leq l\leq k\right)$, of solution of \eqref{eq-main-equation-of-system} which makes the size of spatial average of $u^l$ under control by the value of $u^l$ at one point. We will use a modification of techniques in the proof of Theorem 1.5 of \cite{KL2} to show the regularity theory.\\
\indent We start the proof of Harnack type inequality by reviewing a Harnack-type estimate of the $p$-Laplacian equation
\begin{equation}\label{eq-p-Laplacian-equation-in-section-waiting-time}
	u_t=\La_p u=\nabla\cdot\left(\left|\nabla u\right|^{p-2}\nabla u\right). 
\end{equation}
\begin{lemma}[cf. Result 5 of \cite{KV} and Theorem 3.1 of \cite{AC}]\label{lem-harnack-type-estimate-by-A-and-Caffarelli}
	Let $u$ be a nonegative solution of \eqref{eq-p-Laplacian-equation-in-section-waiting-time} in $\R^n\times\left[0,T\right]$ for some $T>0$. Then, for every $R>0$ there exists a constant $C=C\left(m,n\right)>0$ such that
	\begin{equation*}
		\int_{\left\{|x|<R\right\}}u(x,0)\,dx\leq C\left(\frac{R^{n+\frac{p}{p-2}}}{T^{\frac{1}{p-2}}}+T^{\frac{n}{p}}u^{1+\frac{n(p-2)}{p}}\left(0,T\right) \right).
	\end{equation*}
\end{lemma}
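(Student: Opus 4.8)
The plan is to follow the strategy of Aronson--Caffarelli and of Kamin--V\'azquez, comparing $u$ with the explicit Barenblatt profile $\mathcal{B}_M$ of \eqref{expression-of-Barenblatt-profile-of-p-Laplaican} and exploiting its scaling. First I would reduce to a convenient class: by the standard monotone approximation (as in the proof of Theorem \ref{thm-Main-boundedness-of-diffusion-coefficients-of-p-Laplacian}) it suffices to treat smooth strictly positive solutions, and by the comparison principle applied to the data $u_0\chi_{\{|x|<R\}}\le u(\cdot,0)$ (comparing on a large ball where the truncated solution vanishes, using finite speed of propagation for $p>2$) it suffices to prove the inequality for the solution whose initial datum is supported in $\overline{B_R}$; such a solution has compact support at every time and, by Lemma \ref{lem-Law-of-L-1-Mass-Conservation} with $k=1$, conserves its mass $m:=\int_{\{|x|<R\}}u(x,0)\,dx$. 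Throughout I would use the identities $1+\tfrac{n(p-2)}{p}=\tfrac{1}{pa_2}$ and $\tfrac{n}{p}=\tfrac{a_1}{pa_2}$ coming from \eqref{constants-a-1-a-2-for-scaling-and-mass-conservation}, together with the scaling relations read off from \eqref{expression-of-Barenblatt-profile-of-p-Laplaican}: $\mathcal{B}_M(0,t)=c_\infty M^{pa_2}t^{-a_1}$ and $\supp\mathcal{B}_M(\cdot,t)=\overline{B_{\rho(t)}}$ with $\rho(t)=c_\rho M^{(p-2)a_2}t^{a_2}$, for constants $c_\infty,c_\rho$ depending only on $n,p$.

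The argument then splits according to the size of $m$ relative to the threshold $M_0:=c_0\,R^{\,n+\frac{p}{p-2}}T^{-\frac{1}{p-2}}$, where $c_0=c_0(n,p)$ is fixed (large) so that $\supp\mathcal{B}_{M_0}(\cdot,T)\subseteq\overline{B_{R/2}}$; note $M_0$ is, up to the constant, the first term on the right-hand side. If $m\le M_0$ there is nothing to prove. So assume $m>M_0$. Using $\big(n+\tfrac{p}{p-2}\big)(p-2)=\tfrac1{a_2}$, this is equivalent (after the choice of $c_0$) to $T\ge c_1 R^{1/a_2}m^{-(p-2)}$ with $c_1=c_1(n,p)$, i.e.\ $T$ exceeds the waiting time after which a nonnegative solution of \eqref{eq-p-Laplacian-equation-in-section-waiting-time} whose mass $m$ is initially concentrated in $\overline{B_R}$ must have ``filled'' a fixed neighbourhood of the origin.

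The heart of the matter is the lower bound at the origin in this regime:
\[
u(0,T)\ \ge\ c_2\,m^{pa_2}\,T^{-a_1},\qquad c_2=c_2(n,p)>0 .
\]
Granting it, $T^{n/p}u(0,T)^{1+\frac{n(p-2)}{p}}=T^{a_1/(pa_2)}u(0,T)^{1/(pa_2)}\ge c_2^{1/(pa_2)}\,T^{a_1/(pa_2)}\,m\,T^{-a_1/(pa_2)}=c_2^{1/(pa_2)}m$, which is precisely the second term, and the lemma follows. To establish the displayed bound I would invoke the $p$-Laplacian analogue of Result~5 of \cite{KV}: once $t$ exceeds the waiting time $c_1R^{1/a_2}m^{-(p-2)}$ — which holds at $t=T$ here — one has $u(0,t)\ge c_2\,\mathcal{B}_m(0,t)$. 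Concretely this is obtained by combining (i) a support/spreading estimate showing $\supp u(\cdot,t)$ has expanded to a ball of radius $\asymp m^{(p-2)a_2}t^{a_2}$ about the origin, together with the $L^1$--$L^\infty$ smoothing estimate $\|u(\cdot,t)\|_\infty\le c_\infty m^{pa_2}t^{-a_1}$, which forces the mass $m$ to be spread roughly uniformly over that ball; and (ii) a local lower (expansion of positivity) estimate converting ``mass $\gtrsim m$ in a ball of radius $\rho\asymp m^{(p-2)a_2}t^{a_2}$ around $0$'' into $u(0,2t)\gtrsim m/\rho^{\,n}\asymp m^{pa_2}t^{-a_1}$, exactly as in Lemma~2.2 of \cite{AC}; replacing $2t$ by $t$ costs only a constant. (The earlier $L^1$-comparison reduces this to the truncated solution, whose mass is exactly $m$.)

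I expect step (ii) — the local expansion of positivity for the degenerate $p$-Laplacian — to be the main obstacle: it is where the nonlinearity and degeneracy bite, one cannot simply integrate the equation, and the bound must come from a barrier / intrinsic Harnack argument of DiBenedetto type applied on a cylinder intrinsically scaled to the (a priori unknown) size of $u$ there. All remaining ingredients — the approximation, the $L^1$-comparison reduction, the dichotomy at $M_0$, and the exponent bookkeeping — are routine once the Barenblatt scaling relations above have been recorded.
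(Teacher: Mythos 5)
The paper does not prove this lemma at all: it is imported verbatim from the literature (Result 5 of \cite{KV} and Theorem 3.1 of \cite{AC}), so there is no internal proof to compare against. Your sketch is essentially a reconstruction of the Aronson--Caffarelli strategy that those references use, and its skeleton is sound: the reduction to compactly supported data by comparison, the scaling relations for $\mathcal{B}_M$ read off from \eqref{expression-of-Barenblatt-profile-of-p-Laplaican}, the identities $1+\tfrac{n(p-2)}{p}=\tfrac{1}{pa_2}$ and $\tfrac{n}{p}=\tfrac{a_1}{pa_2}$, the dichotomy at $M_0\asymp R^{n+\frac{p}{p-2}}T^{-\frac{1}{p-2}}$, and the observation that $m>M_0$ is equivalent to $T\gtrsim R^{1/a_2}m^{-(p-2)}$ all check out, and the final bookkeeping correctly converts the claimed bound $u(0,T)\ge c_2m^{pa_2}T^{-a_1}$ into the second term of the estimate. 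Two caveats. First, the parenthetical justification for choosing $c_0$ is backwards: since $\supp\mathcal{B}_{M_0}(\cdot,T)$ has radius $\asymp c_0^{(p-2)a_2}R$, containment in $\overline{B_{R/2}}$ would force $c_0$ \emph{small}; what you actually need (and subsequently use) is $c_0$ large so that in the case $m>M_0$ the time $T$ exceeds the waiting time by a definite factor, so the slip is harmless but should be fixed. Second, and more substantively, the pointwise lower bound at the origin past the waiting time --- your step (ii), the expansion-of-positivity/intrinsic-Harnack input --- is exactly the technical content of the theorems being cited; you correctly identify it as the main obstacle but do not supply it, so what you have is a correct reduction of the lemma to that known hard estimate rather than a self-contained proof. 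Given that the paper itself treats the whole lemma as a quotation, this is a reasonable level of detail, but the proposal should be read as "modulo the $p$-Laplacian expansion of positivity" rather than as complete.
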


Let $\mathcal{B}_M(x,t)$ be the fundamental solution of the $p$-Laplacian equation with $L^1$ mass $M$. Then, by Lemma \ref{lem-harnack-type-estimate-by-A-and-Caffarelli} we can get
\begin{equation}\label{eq-harnack-type-estimates-for-barrenblatt}
	\int_{\left\{|x|<R\right\}}\mathcal{B}_{M}(x,0)\,dx\leq C\left(\frac{R^{n+\frac{p}{p-2}}}{T^{\frac{1}{p-2}}}+T^{\frac{n}{p}}\mathcal{B}_M^{1+\frac{n(p-2)}{p}}\left(0,T\right) \right).
\end{equation}

We are now ready to give a proof for our Harnack type inequality.

\begin{proof}[\textbf{Proof of Theorem \ref{lem-calculation-about-waiting-time-lower-bound}}]
	The proof is almost same as the proof for Theorem 1.5 of \cite{KL2}. For the future references, we will give the sketch of proof here.\\
	\indent For any $M>0$, denote by $P\left(M\right)$ the class of all non-negative weak solution $\overline{\bold{u}}=\left(\overline{u}^1,\cdots,\overline{u}^k\right)$ of 
	\begin{equation*}
		\left(\overline{u}^l\right)_t=\nabla\cdot\left(\left|\nabla\overline{\bold{u}}\right|^{p-2}\nabla \overline{u}^l\right) \qquad \mbox{in $\R^n\times\left[0,\infty\right)$}
	\end{equation*}
	satisfying
	\begin{equation*}
		\sup_{t>0}\int_{\R^n}\left|\overline{\bold{u}}\right|(x,t)\,dx\leq M.
	\end{equation*} 
Let $k\in\N$ and $T>0$ be fixed. By Lemma \ref{lem-Law-of-L-1-Mass-Conservation}, there exists a constant $M_{\bold{u}}>0$, depending on $\bold{u}\left(\cdot,0\right)$, such that
	\begin{equation}\label{eq-bound-of-absolute-of-bold-u-in-L-1-for-all-time-sup}
		\int_{\R^n}\left|\bold{u}\right|(x,t)\,dx\leq M_{\bold{u}} \qquad \forall 0\leq t\leq T.
	\end{equation}
	Thus, we are going to show that \eqref{eq-claim-L-1-norm=of-u-at-initial-is-bounded-by-H-sub-m} holds when $\bold{u}\in P\left(M\right)$ for some constant $M>0$. We divide the proof into two cases.\\
	\textbf{Case 1.} $\textbf{supp} \left|\bold{u}\right|(\cdot,0)\subset B_1=\left\{x\in\R^n:|x|\leq 1\right\}$.\\
	\indent If \eqref{eq-claim-L-1-norm=of-u-at-initial-is-bounded-by-H-sub-m} is violated, then for each $j\in\N$ there exist a constant $R_j>0$, a solution $\bold{u}_j=\left(u^1_j,\cdots,u^{k}_j\right)\in P\left(M_j\right)$ and a number $1\leq i'(j)\leq k$ such that 
	\begin{equation}\label{eq-asumption-if-eq-claim-L-1-norm=of-u-at-initial-is-bounded-by-H-sub-m-violated}
		\int_{\left\{|x|<R_j\right\}}u^{i\,'}_j(x,0)\,dx\geq\frac{j}{\left(\mu^{i\,'}_j\right)^{1+\frac{n(p-2)}{p}}}\left(\frac{R_j^{n+\frac{p}{p-2}}}{T^{\frac{1}{p-2}}}+T^{\frac{n}{p}}\left(u_j^{i'}\right)^{1+\frac{n(p-2)}{p}}\left(0,T\right)\right)
	\end{equation}
	where
	\begin{equation*}
		\mu_j^{\,i\,'}=\frac{\int_{\R^n}u_j^{\,i\,'}(x,0)\,dx}{\max_{1\leq l\leq k}\left\{\int_{\R^n}u_j^l(x,0)\,dx\right\}} \qquad \forall 1\leq i\leq k.
	\end{equation*}
	Without loss of generality we may assume that $i\,'=1$ for each $j\in\N$ and let
	\begin{equation}\label{eq-definition-of-I-i-L-1-mass-of-u-i-max}
		I_j=\max_{1\leq l\leq k}\left\{\int_{\R^n}u^{l}_j(x,0)\,dx\right\}.
	\end{equation}
	By \eqref{eq-asumption-if-eq-claim-L-1-norm=of-u-at-initial-is-bounded-by-H-sub-m-violated} and \eqref{eq-definition-of-I-i-L-1-mass-of-u-i-max},
\begin{equation*}
	I_j\to\infty \qquad \mbox{ as $j\to\infty$}
\end{equation*} 
since $R_j^{n+\frac{p}{p-2}}T^{-\frac{1}{p-2}}\geq T^{\frac{n}{p}}>0$. Consider the rescaled function 
	\begin{equation*}
		v^l_j(x,t)=\frac{1}{I_j^{\frac{p}{n(p-2)+p}}}u^l_j\left(I_j^{\frac{p-2}{n(p-2)+p}}x,t\right) \qquad \forall 1\leq l\leq k.
	\end{equation*}
	By direct computation, one can easily check that $\bold{v}_j=\left(v_j^1,\cdots,v^{k}_j\right)$ is also a solution of \eqref{eq-system-PME-for-waiting-time-1} in $\R^n\times[0,\infty)$ with 
	\begin{equation}\label{eq-L-1-mass-of-rescaled-function-v-i-s}
		0<\mu^0\leq \mu_j^l=\int_{\R^n}v^{i}_{j}(x,0)\,dx\leq 1 \qquad \forall 1\leq l\leq k,\,j\in\N.
	\end{equation}  
Moreover,
	\begin{equation*}
		\textbf{supp}\,v_j^{\,i}(\cdot, 0)\subset B_{\frac{1}{I_j^{\,\,\frac{p-2}{n(p-2)+p}}}}\left(0\right) \qquad \forall 1\leq l\leq k,\,\,j\in\N.
	\end{equation*}
	By the Ascoli theorem and a diagonalization argument the sequence $\left\{\bold{v}_j\right\}_{j=1}^{\infty}$ has a subsequence which we may assume without loss of generality to be the sequence itself such that
	\begin{equation}\label{eq-convergence-opf-int-v-j-i-to-mass-mu-i-1}
		v_j^{\,i}(x,0)\to \mu^{\,i}\delta \qquad \forall 1\leq l\leq k
	\end{equation}
	where $\delta$ is Dirac's delta function and $\mu^{\,i}$ is a constant such that $0<\mu^0\leq \mu^{\,i}\leq 1$. Then, by the Theorem \ref{thm-convergence-between-absolute-bold-theta-and-fundamental-function-with-decay-rate},
	\begin{equation}\label{eq-convergence-of-scaled-v-j-to-Barrenblatt-with-mass-absoluiton-bold-M}
		v_j^l\to \frac{\mu^{\,i}}{\mbox{\boldmath$\mu$}}\mathcal{B}_{\mbox{\boldmath$\mu$}}\qquad \mbox{uniformly on compact subset of $\R^n\times(0,\infty)$}
	\end{equation}
	where $\mbox{\boldmath$\mu$}=\sqrt{\sum_{i=1}^k\left(\mu^{\,i}\right)^2}$ and $\mathcal{B}_{\mbox{\boldmath$\mu$}}$ is the fundamental solution of $p$-Laplacian equation with $L^1$ mass $\mbox{\boldmath$\mu$}$. By \eqref{eq-L-1-mass-of-rescaled-function-v-i-s}, \eqref{eq-convergence-opf-int-v-j-i-to-mass-mu-i-1} and \eqref{eq-convergence-of-scaled-v-j-to-Barrenblatt-with-mass-absoluiton-bold-M}, there exists a number $j_0\in\N$ such that
	\begin{equation}\label{eq-comparison-between-mu-i-j-and-mu-i-1}
		\mu^1_j=\mu^{i\,'}_j\geq \frac{1}{2}\mu^{i\,'}=\frac{1}{2}\mu^1\qquad \forall j\geq j_0
	\end{equation}
	and
	\begin{equation}\label{eq-compare-between-v-j-1-and-barrenblat-with-M-1-by-converging-1}
		\int_{\left\{|x|<I_j^{-\frac{p-2}{n{p-2}+p}}R_j\right\}}v^1_j(x,0)\,dx\leq \mu_j^1\leq 2\mu^1\leq 2\int_{\left\{|x|<I_j^{-\frac{p-2}{n(p-2)+p}}R_j\right\}}\mathcal{B}_{\mbox{\boldmath$\mu$}}(x,0)\,dx  \qquad \forall j\geq j_0
	\end{equation}
	and
	\begin{equation}\label{eq-compare-between-v-j-1-and-barrenblat-with-M-1-by-converging-2}
		\frac{\mu^1}{\mbox{\boldmath$\mu$}}\mathcal{B}_{\mbox{\boldmath$\mu$}}(0,T)\leq 2v_j^1\left(0,T\right)=\frac{2}{I_j^{\frac{p}{n(p-2)+p}}}u^1_j(0,T) \qquad \forall j\geq j_0.
	\end{equation}
	By \eqref{eq-harnack-type-estimates-for-barrenblatt}, \eqref{eq-comparison-between-mu-i-j-and-mu-i-1}, \eqref{eq-compare-between-v-j-1-and-barrenblat-with-M-1-by-converging-1} and \eqref{eq-compare-between-v-j-1-and-barrenblat-with-M-1-by-converging-2}
	\begin{equation*}
		\begin{aligned}
			\frac{1}{I_j}\int_{\left\{|x|<R_j\right\}}u^1_j(x,0)\,dx&=\int_{\left\{|x|<I_j^{-\frac{p-2}{n(p-2)+p}}R_j\right\}}v^1_j(x,0)\,dx\\
			&\leq 2\int_{\left\{|x|<I_j^{-\frac{p-2}{n(p-2)+p}}R_j\right\}}\mathcal{B}_{\mbox{\boldmath$\mu$}}(x,0)\,dx\leq 2C\left(\frac{R_j^{n+\frac{p}{p-2}}}{I_j\,T^{\frac{1}{p-2}}}+T^{\frac{n}{p}}\mathcal{B}^{1+\frac{n(p-2)}{p}}_{\mbox{\boldmath$\mu$}}(0,T)\right)\\
			&\leq\frac{2^{2+\frac{n(p-2)}{p}}k^{\frac{1}{2}+\frac{n(p-2)}{2p}}C}{\left(\mu^1\right)^{1+\frac{n(p-2)}{p}}I_j}\left(\frac{R_j^{n+\frac{p}{p-2}}}{T^{\frac{1}{p-2}}}+T^{\frac{n}{p}}\left(u^1_j\right)^{1+\frac{n(p-2)}{p}}(0,T)\right)\\
			&\leq\frac{2^{3+\frac{2n(p-2)}{p}}k^{\frac{1}{2}+\frac{n(p-2)}{2p}}C}{\left(\mu_j^1\right)^{1+\frac{n(p-2)}{p}}I_j}\left(\frac{R_j^{n+\frac{p}{p-2}}}{T^{\frac{1}{p-2}}}+T^{\frac{n}{p}}\left(u^1_j\right)^{1+\frac{n(p-2)}{p}}(0,T)\right) \qquad \forall j\geq j_0.
		\end{aligned}
	\end{equation*}
	Hence, for $C_1=2^{3+\frac{2n(p-2)}{p}}k^{\frac{1}{2}+\frac{n(p-2)}{2p}}C$
	\begin{equation*}
		\begin{aligned}
			\int_{\left\{|x|<R_j\right\}}u^1_j(x,0)\,dx&\leq \frac{C_1}{\left(\mu^1_j\right)^{1+\frac{n(p-2)}{p}}}\left(\frac{R_j^{n+\frac{p}{p-2}}}{T^{\frac{1}{p-2}}}+T^{\frac{n}{p}}\left(u^1_j\right)^{1+\frac{n(p-2)}{p}}(0,T)\right)  \qquad \forall j\geq j_0
		\end{aligned}
	\end{equation*}
	, which contradicts \eqref{eq-claim-L-1-norm=of-u-at-initial-is-bounded-by-H-sub-m} and  the case follows.\\
	\textbf{Case 2.} {\it General case} $\left|\bold{u}\right|\left(\cdot,0\right)$ has compact support.\\ 
	Let $R_0>1$ be a constant such that
	\begin{equation*}
		\left|\bold{u}\right|(x,0)=0 \qquad \forall x\in\R^n\bs B_{R_0}
	\end{equation*}
	and consider the rescaled functions
	\begin{equation*}
		w^l(x,t)=\frac{1}{R_0^{\frac{p}{p-2}}}u^l\left(R_0x,t\right) \qquad \forall 1\leq i\leq k.
	\end{equation*}
	Then $\bold{w}=\left(w^1,\cdots,w^k\right)$ is a solution of \eqref{eq-system-PME-for-waiting-time-1} with 
	\begin{equation*}
		\textbf{supp}\left|\bold{w}\right|(\cdot,0)\subset B_1. 
	\end{equation*}
	By the \textbf{Case 1}, for any $0<R<R_0$ we can get
	\begin{equation}\label{eq-harnack-type-estimates-applying-case-1-for-general-case}
		\begin{aligned}
			\frac{1}{R_0^{n+\frac{p}{p-2}}}\int_{\left\{|x|<R\right\}}u^l(x,0)\,dx&=\int_{\left\{|x|<\frac{R}{R_0}\right\}}w^l(x,0)\,dx \\
			&\leq \frac{C}{\left(\mu_w^l\right)^{1+\frac{n(p-2)}{p}}}\left(\frac{1}{T^{\frac{1}{p-2}}}\frac{R^{n+\frac{p}{p-2}}}{R_0^{n+\frac{p}{p-2}}}+T^{\frac{n}{p}}\left(w^l\right)^{1+\frac{n(p-2)}{p}}\left(0,T\right)\right)\\
			&= \frac{C}{\left(\mu^l\right)^{1+\frac{n(p-2)}{p}}}\left(\frac{1}{T^{\frac{1}{p-2}}}\frac{R^{n+\frac{p}{p-2}}}{R_0^{n+\frac{p}{p-2}}}+\frac{T^{\frac{n}{p}}}{R_0^{n+\frac{p}{p-2}}}\left(u^l\right)^{1+\frac{n(p-2)}{p}}\left(0,T\right)\right) \qquad \forall 1\leq i\leq k
		\end{aligned}
	\end{equation}
	where
	\begin{equation*}
		\mu_w^l=\frac{\int_{\R^n}w^l(x,0)\,dx}{\max_{1\leq l\leq k}\left\{\int_{\R^n}w^l(x,0)\,dx\right\}} \qquad \forall 1\leq i\leq k.
	\end{equation*}
	Multiplying \eqref{eq-harnack-type-estimates-applying-case-1-for-general-case} by $R_0^{n+\frac{p}{p-2}}$, \eqref{eq-claim-L-1-norm=of-u-at-initial-is-bounded-by-H-sub-m} holds and the theorem follows.
\end{proof}

\indent As mentioned in Introduction, the result of Harnack type inequality plays an important role on the study of initial trace theorem. We finish this paper by providing the proof of Corollary \ref{cor-existence-of-initial-trace-from-Harnack-Type-Ineqlaity}. 

\begin{proof}[\textbf{Proof of Corollary \ref{cor-existence-of-initial-trace-from-Harnack-Type-Ineqlaity}}]
	By the law of $L^1$ mass conservation (Lemma \ref{lem-Law-of-L-1-Mass-Conservation}), $u^i\left(\cdot,t\right)$ is uniformly bounded in $L^1\left(\R^n\right)$. Thus, by an argument similar in the proof of Theorem \ref{thm-Main-boundedness-of-diffusion-coefficients-of-p-Laplacian}, the convergence \eqref{result-1-of-existence-of-initila-trace} can be easily proved.\\
	\indent We now focus on the dacay rate of initial trace $\rho^l$. By the result of Theorem \ref{lem-calculation-about-waiting-time-lower-bound}, there exists a constant $C_1=C_1(n,m)>0$ independent of $\epsilon\in\left(0,\frac{T}{2}\right)$ such that
	\begin{equation}\label{inequaity-applying-result-of-Thm-1-3-for=initial-trace}
		\begin{aligned}
			\int_{\left\{|x|<R\right\}}u^l(x,\epsilon)\,dx&\leq \frac{2^{\frac{1}{p-2}}\,C_1}{\left(\mu_0\right)^{1+\frac{n(p-2)}{p}}}\left(\frac{R^{n+\frac{p}{p-2}}}{T^{\frac{1}{p-2}}}+T^{\frac{n}{p}}\left(u^l\right)^{1+\frac{n(p-2)}{p}}\left(0,T\right)\right) \qquad\qquad  \forall R>0,\,\,1\leq l\leq k.
		\end{aligned}
	\end{equation}
	By \eqref{result-1-of-existence-of-initila-trace}, we can get
	\begin{equation}\label{local-convergence-between-u0-i-and-intial-trace}
		\lim_{\epsilon\to 0}\int_{\left\{|x|<R\right\}}u^l(x,\epsilon)\,dx=\int_{\left\{|x|<R\right\}}\rho^l(dx).
	\end{equation}
Since the constant $\mu_0$ is independent of $\epsilon$, by \eqref{inequaity-applying-result-of-Thm-1-3-for=initial-trace} and \eqref{local-convergence-between-u0-i-and-intial-trace}, \eqref{result-2-of-existence-of-initila-trace} hold for $C=\frac{2^{\frac{1}{p-2}}\,C_1}{\left(\mu_0\right)^{1+\frac{n(p-2)}{p}}}$ and the corollary follows.
\end{proof}

\section*{Acknowledgement} 
\setcounter{equation}{0}
\setcounter{thm}{0}

Ki-Ahm Lee was supported by Samsung Science and Technology Foundation under Project Number SSTF-BA1701-03. Ki-Ahm Lee also holds a joint appointment with Research Institute of Mathematics of Seoul National University. Sunghoon Kim was supported by the National Research Foundation of Korea(NRF) grant funded by the Korea government(MSIT) (No.  2020R1F1A1A01048334). Sunghoon Kim was also supported by the Research Fund, 2021 of The Catholic University of Korea.

\end{document}